\begin{document}

\sloppy

\title{A geometric splitting of the motive of $\GL_n$}
\author{W. Sebastian Gant}
\address{W. Sebastian Gant. Department of Mathematics, University of British Columbia, Vancouver~BC V6T~1Z2, Canada.}
\email{wsgant@math.ubc.ca}

\begin{abstract} 
    A paper by Haynes Miller shows that there is a filtration on the unitary groups that splits in the stable homotopy category, where the stable summands are certain Thom spaces over Grassmannians. We give an algebraic version of this result in the context of Voevodsky's tensor triangulated category of stable motivic complexes $\cat{DM}(k,R)$, where $k$ is a field. Specifically, we show that there are algebraic analogs of the Thom spaces appearing in Miller's splitting that give rise to an analogous splitting of the motive $M(\GL_n)$ in $\cat{DM}(k,R)$, where $\GL_n$ is the general linear group scheme over $k$. 
\end{abstract}

\maketitle

\section{Introduction}\label{sec:Intro}

It is shown in \cite{Mil:StableSplittings} that there is a filtration on the unitary group $\U(n)$:
\[
    \varnothing = F_{-1}(n) \subseteq F_0(n)  \subseteq F_1(n) \subseteq \cdots \subseteq F_{n-1}(n) \subseteq F_n(n) = \U(n),
\]
where 
\[
    F_m(n) := \{A \in \U(n) : \dim \ker(A - I_n) \geq n-m\},
\]
that splits in the stable homotopy category. That is, there is a filtration preserving stable weak equivalence
\begin{equation}\label{eq:MilSplitting}
    \U(n)_+ \xrar{\sim} \bigvee_{m=0}^n F_m(n) / F_{m-1}(n),
\end{equation}
where the target is filtered by truncating wedge summands. The quotient $F_m(n) / F_{m-1}(n)$ is identified \textit{op. cit.} as the Thom space of the rank-$m^2$ vector bundle over $\Gr(m,n)(\C)$ associated with the adjoint representation of $\U(m)$ on its Lie algebra. 

Recall that the singular cohomology ring of $\U(n)$ admits a presentation 
\[
    \h^*(\U(n),\Z) = \Lambda_\Z(\rho_1^u, \dots, \rho_n^u)
\]
where $|\rho_i^u| = 2i-1$. The cohomology ring $\h^*(\U(n),\Z)$ has a second grading given by word length, and we denote by
\[
    \Lambda_\Z^m(\rho_1^u, \dots, \rho_n^u)
\]
the $m$-th graded piece for this grading. The stable splitting \eqref{eq:MilSplitting} and the Thom isomorphism theorem provide a decomposition
\begin{equation}\label{eq:UCohoThomDecomp}
    \h^*(\U(n),\Z) \cong \bigoplus_{m=0}^n \h^{*+m^2}(\Gr(m,n)(\C),\Z)
\end{equation}
as a graded abelian group. It follows readily from the work of \cite{Crabb:StableSummands} (see \Cref{lem:Lev}) that the summand $\h^{*+m^2}(\Gr(m,n)(\C),\Z)$ corresponds to the free submodule $\Lambda_\Z^m(\rho_1^u, \dots, \rho_n^u)$ of $\h^*(\U(n),\Z)$. 

The quotients $F_m(n)/F_{m-1}(n)$ admit another description. Let $V(m,n)^u$ denote the complex Stiefel manifold of orthonormal $m$-frames in $\C^n$, equipped with its usual action by $\U(m)$,  and let $\U(m)^{ad}$ denote the Lie group $\U(m)$ equipped with the adjoint action. Define
\[
    A(m,n)^u := V(m,n)^u \times_{\U(m)} \U(m)^{ad}.
\]
Points of $A(m,n)^u$ are pairs $(V,\phi)$ consisting of an $m$-plane $V \subseteq \C^n$ and a unitary automorphism $\phi$ of $V$, and projection to the first factor gives $A(m,n)^u$ the structure of a bundle of groups over the complex Grassmannian $\Gr(m,n)(\C)$ with typical fibre $\U(m)$. There is a map $f_{m,n}\co A(m,n)^u \to \U(n)$, defined by sending $(V,\phi)$ to the unitary transformation 
\[
    \phi \oplus \id \co V \oplus V^\perp \to  V \oplus V^\perp
\]
of $\C^n$, which surjects onto $F_m(n)$. The filtration $\{F_l(m)\}$ on $\U(m)^{ad}$ is equivariant so induces a filtration on $A(m,n)^u$ which we denote by $\{F_l(m,n)\}$. Miller shows that the map $f_{m,n}$ is filtration-preserving and induces a homeomorphism 
\[
    A(m,n)^u/F_{m-1}(m,n) \to F_m(n)/F_{m-1}(n)
\]
on quotients.

A motivic analog of the map $f_{m,n}$ has been studied by Ben Williams in \cite{Wil:MotivicCoho} for the case $m=1$. Note that $A(1,n)^u$ is homeomorphic to $S^1 \times \CP^{n-1}$, and the map $f_{1,n}\co S^1 \times \CP^{n-1} \to \U(n)$ factors through $A(1,n)^u /F_0(1,n) = \Sigma \CP^{n-1}_+$. The motivic analog of $A(1,n)^u$ is the scheme $\G_m \times \tilde{\p}^{n-1}$, where $\tilde{\p}^{n-1}$ is the so-called Jouanolou's device for $\p^{n-1}$ (denoted $F^{n-1}$ in \cite{Wil:MotivicCoho}). Williams constructs a map of schemes
\[
    \G_m \times \tilde{\p}^{n-1} \to \GL_n
\]
which factors through the motivic space $\G_m \wedge (\tilde{\p}^{n-1}_+)$, where $\G_m$ is pointed at 1, and shows that the induced map in motivic cohomology 
\[
    \h^{*,*}(\GL_n,\Z) \to \h^{*,*}(\G_m \wedge (\tilde{\p}^{n-1}_+), \Z)
\]
is an isomorphism in bidegrees $(2i-1,i)$ for each $i$ (\cite{Wil:MotivicCoho}*{Thm. 18}). The motivic cohomology ring $\h^{*,*}(\GL_n,\Z)$ is generated as an algebra over $\h^{*,*}(k,\Z)$ by classes $\rho_i$ with $|\rho_i| = (2i-1,i)$, so Williams' calculation can be seen as a motivic analog of the fact that the summand $\h^{*+1}(\CP^{n-1},\Z)$ of \eqref{eq:UCohoThomDecomp} corresponds to the free submodule generated by the classes $\rho_i^u$. 

This paper builds on the work of \cite{Wil:MotivicCoho} to show that, under some conditions on the base field $k$ and coefficient ring $R$, the motive of $\GL_n$ admits a decomposition analogous to \eqref{eq:MilSplitting} in the category $\cat{DM}(k,R)$, Voevodsky's category of stable motivic complexes with coefficients in $R$ (see \cite{CD:TriagulatedCatsOfMotives}*{Def. 11.1.1} for a precise definition of this category). To this end, we construct algebraic analogs of the spaces $A(m,n)^u$, which we denote by $A(m,n)$, as well as the quotients $F_m(n)/F_{m-1}(n)$, whose algebraic versions we denote by $X_m(m,n)$. We show that there are algebraic versions of the maps $f_{m,n}$:
\[
    A(m,n) \to \GL_n,
\]
certain quotients of which split in $\cat{DM}(k,R)$ to yield a decomposition 
\[
    M(\GL_n) \cong \bigoplus_{m=0}^n M(X_m(m,n)).
\]
In the case that the base field $k$ admits a complex embedding, the motivic space $X_m(m,n)$ complex realizes to the quotient $F_m(n)/F_{m-1}(n)$ of Miller's filtration (\Cref{prop:XmRealization}). We conjecture that such a splitting exists in the stable motivic homotopy category $\cat{SH}(k)$.

\subsection{Outline}

In \Cref{sec:Prelims}, we show that the motive $M(\GL_n)$ is pure Tate (though this result is known, see e.g. \cite{Pen:MWMotivicCoho}) and discuss a key computational tool: the motivic Rothenberg--Steenrod spectral sequence. The motivic spaces $X_m(m,n)$ are somewhat challenging to construct and require a discussion of automorphism bundles over (Jouanolou's devices of) flag varieties. This discussion is carried out in \Cref{sec:AutBundles}. \Cref{sec:MotiveOfA} establishes that the motives associated with these bundles over flag varieties are pure Tate and identifies their Tate summands. Calculations with the motivic Rothenberg--Steenrod spectral sequence are carried out in \Cref{sec:SSCalcs}. The motivic spaces $X_m(m,n)$ are defined in \Cref{sec:AuxSpaces}, the relevant aspects of complex realization are discussed in \Cref{sec:CxReal}, and Sections \ref{sec:Char0Splitting} and \ref{sec:CharPSplitting} establish the splitting over fields of characteristic 0 and positive characteristic, respectively. 

\subsection{Notation and conventions}

Throughout this paper, $k$ denotes a base field and $R$ a (commutative, unital) coefficient ring. Both $k$ and $R$ may come with additional hypotheses which will be made clear at the beginning of each section. If a singular or motivic cohomology group appears without coefficients, then integer coefficients are assumed. Let $\cat{Sm}_k$ denote the category of smooth, separated, finite type $k$-schemes and $Sh_{\text{Nis}}(\cat{Sm}_k)$ the category of Ninsevich sheaves on $\cat{Sm}_k$. Objects of $\cat{Sm}_k$ will simply be called ``smooth $k$-schemes." Since we make use of complex realization, the motivic homotopy-theoretic constructions in this paper are carried out in the universal model category developed in \cite{DUG:UniversalHmtpy}*{Sec. 8}, which we denote by $\cat{Spc}(k)$. This model category is Quillen equivalent to the one constructed in \cite{MV:A1Homotopy} by \cite{DUG:UniversalHmtpy}*{Prop. 8.1}. If the base field is equipped with an embedding $k \to \C$, we denote the analytic space associated with an object $Y$ of $\cat{Sm}_k$ by $Y(\C)$. Lastly, we adopt the motivic cohomology grading convention for motivic spheres: we denote by $S^{p,q}$ the motivic sphere $(S^1)^{\wedge p-q} \wedge \G_m^{\wedge q}$, and when $Y$ is a pointed motivic space,  $\Sigma^{p,q} Y := S^{p,q} \wedge Y$.

\section{Preliminaries}\label{sec:Prelims}

In this section, we establish preliminary results regarding the motive $M(\GL_n)$ and describe the motivic Rothenberg--Steenrod spectral sequence as developed in \cite{Wil:EquivariantMotivicCoho}. 

It is shown in \cite{Ron:EndomorphismsofP2}*{Prop. 4.2} that the inclusion 
\begin{align*}
    \SL_{n-1} &\to \SL_n, \\
    g &\mapsto 
    \begin{bmatrix}
            g & 0 \\
            0 & 1
    \end{bmatrix}
\end{align*}
fits into in an $\A^1$-homotopy cofibre sequence 
\[
    (\SL_{n-1})_+ \to (\SL_n)_+ \to \Sigma^{2n-1,n} ({\SL_{n-1}}_+).
\]
After applying $- \wedge (\G_m)_+$ to the previous cofibre sequence, we obtain the following. 

\begin{lem}\label{lem:in-1Hocofib}
    The inclusion 
    \begin{align*}
        \imath_{n-1} \co \GL_{n-1} \to \GL_n \\
        g \mapsto 
        \begin{bmatrix}
            g & 0 \\
            0 & 1
        \end{bmatrix}
    \end{align*}
    fits into an $\A^1$-homotopy cofibre sequence 
    \[
        (\GL_{n-1})_+ \xrar{(\imath_{n-1})_+} (\GL_n)_+ \to \Sigma^{2n-1,n}({\GL_{n-1}}_+).
    \]
\end{lem}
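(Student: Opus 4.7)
The plan is to do exactly what the author suggests in the sentence preceding the lemma: take the cofibre sequence $(\SL_{n-1})_+ \to (\SL_n)_+ \to \Sigma^{2n-1,n}((\SL_{n-1})_+)$ from \cite{Ron:EndomorphismsofP2} and smash it with $(\G_m)_+$. Since $- \wedge (\G_m)_+$ is a left Quillen endofunctor on pointed motivic spaces (smashing with a cofibrant object preserves cofibrations and trivial cofibrations in $\cat{Spc}(k)$), it preserves $\A^1$-homotopy cofibre sequences, so we obtain an $\A^1$-cofibre sequence
\[
    (\SL_{n-1})_+ \wedge (\G_m)_+ \to (\SL_n)_+ \wedge (\G_m)_+ \to \Sigma^{2n-1,n}((\SL_{n-1})_+) \wedge (\G_m)_+.
\]

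Next I would identify all three terms with the claimed ones. The determinant map splits to give a scheme isomorphism $\SL_n \times \G_m \xrar{\cong} \GL_n$, for instance by $(s,t) \mapsto s \cdot \mathrm{diag}(t,1,\dots,1)$. Combined with the general identity $(X \times Y)_+ = X_+ \wedge Y_+$, this yields $(\SL_n)_+ \wedge (\G_m)_+ \cong (\GL_n)_+$ and likewise in dimension $n-1$. For the cofibre, the symmetric monoidal structure of $\wedge$ lets one pull $\Sigma^{2n-1,n} = S^{2n-1,n} \wedge -$ out past the $(\G_m)_+$ factor, so the third term becomes $\Sigma^{2n-1,n}((\SL_{n-1})_+ \wedge (\G_m)_+) \cong \Sigma^{2n-1,n}((\GL_{n-1})_+)$.

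The one thing worth checking carefully is that the first map in the resulting sequence is actually $(\imath_{n-1})_+$ and not some twist of it. For this I would note that under the isomorphism $\SL_n \times \G_m \cong \GL_n$ chosen above, the $(n-1)$-version is compatible with the $n$-version in the obvious square: starting from $(g,t) \in \SL_{n-1} \times \G_m$, going across by the $\SL$-inclusion times the identity gives $(\mathrm{diag}(g,1),t)$, which maps to $\mathrm{diag}(g,1) \cdot \mathrm{diag}(t,1,\dots,1) = \mathrm{diag}(g \cdot \mathrm{diag}(t,1,\dots,1),\,1)$ in $\GL_n$; this agrees with first multiplying in $\GL_{n-1}$ and then applying $\imath_{n-1}$. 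So the square commutes and the induced first map in the cofibre sequence is indeed $(\imath_{n-1})_+$.

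There is no real obstacle here; the entire argument is a formal manipulation using that $-\wedge (\G_m)_+$ is left Quillen together with the scheme-level splitting $\GL_n \cong \SL_n \times \G_m$. The only nuisance is the bookkeeping for the commutative square identifying the inclusion, which is the step I would write out most explicitly.
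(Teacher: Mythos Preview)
Your proposal is correct and follows exactly the approach the paper takes: the paper's entire argument is the one-line remark ``After applying $- \wedge (\G_m)_+$ to the previous cofibre sequence, we obtain the following,'' and you have simply unpacked that sentence, including the identification $\GL_n \cong \SL_n \times \G_m$ and the verification that the first map is $(\imath_{n-1})_+$. There is nothing to add.
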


\begin{notation}
    Following the notation of \cite{Pen:MWMotivicCoho}, given a strictly increasing sequence of positive integers $(i_1, \dots, i_m)$, we let $d(i_1, \dots, i_m)$ denote the bidegree $(\sum_{j=1}^m (2i_j-1),\sum_{j=1}^m i_j)$. Similarly, let $R(d(i_1, \dots, i_m))$ denote the Tate motive $R(\sum_{j=1}^m i_j)[\sum_{j=1}^m (2i_j-1)]$. If $m=0$, we set $R(d(i_1, \dots, i_m)) := R(0)[0]$.
\end{notation}

Recall the following definition.

\begin{defn}
    An object of $\cat{DM}(k,R)$ is \textit{pure Tate} if it is isomorphic to a finite direct sum of Tate motives $R(q)[p]$. 
\end{defn}

A straightforward induction argument (see \cite{Pen:MWMotivicCoho}*{Rem. 3}) in conjunction with \Cref{lem:in-1Hocofib} shows the following. 

\begin{prop}\label{prop:GLnPureTate}
    The motive $M(\GL_n)$ is pure Tate. In particular, there is an isomorphism 
    \[
        M(\GL_n) \cong \smashoperator[lr]{\bigoplus_{\substack{1 \leq i_1 < \cdots < i_m \leq n \\ 0 \leq m \leq n}}} R(d(i_{1},\dots, i_m))
    \]
    in $\cat{DM}(k,R)$ such that the inclusion $\imath_{n-1}\co \GL_{n-1} \to \GL_n$ induces the obvious inclusion of summands. 
\end{prop}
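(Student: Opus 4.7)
The plan is to induct on $n$. The base case $n = 1$ is immediate, since $\GL_1 = \G_m$ and $M(\G_m) \cong R \oplus R(1)[1]$ matches the formula. For the inductive step, applying $M(-)$ to the cofibre sequence of \Cref{lem:in-1Hocofib} yields a distinguished triangle in $\cat{DM}(k, R)$:
\[
    M(\GL_{n-1}) \xrightarrow{M(\imath_{n-1})} M(\GL_n) \to M(\GL_{n-1})(n)[2n-1] \xrightarrow{\partial} M(\GL_{n-1})[1].
\]
By the inductive hypothesis the outer terms are pure Tate; the summands of $M(\GL_{n-1})(n)[2n-1]$ are indexed by subsets of $\{1, \dots, n\}$ containing $n$, while those of $M(\GL_{n-1})$ are indexed by subsets of $\{1, \dots, n-1\}$. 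Splitting the triangle would therefore give $M(\GL_n) \cong M(\GL_{n-1}) \oplus M(\GL_{n-1})(n)[2n-1]$, whose combined summands index all subsets of $\{1, \dots, n\}$ as claimed, with $M(\imath_{n-1})$ identified as the inclusion of those summands indexed by subsets not containing $n$.

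The task thus reduces to exhibiting a retraction of $M(\imath_{n-1})$. Since $M(\GL_{n-1})$ is pure Tate, such a retraction can be assembled from lifts along $\imath_{n-1}^*$ of the ``basis projections'' $e_S : M(\GL_{n-1}) \to R(d(S))$ for each $S \subseteq \{1, \dots, n-1\}$: combining the lifts across all $S$ produces a morphism $r : M(\GL_n) \to M(\GL_{n-1})$ with $r \circ M(\imath_{n-1}) = \mathrm{id}$. These lifts exist provided the restriction map
\[
    \imath_{n-1}^* : \h^{*,*}(\GL_n, R) \to \h^{*,*}(\GL_{n-1}, R)
\]
is surjective in every bidegree. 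This surjectivity holds because $\h^{*,*}(\GL_{n-1}, R)$ is the exterior $\h^{*,*}(k, R)$-algebra on classes $\rho_1, \dots, \rho_{n-1}$ (as recalled in the introduction), each of which is pulled back from the corresponding generator $\rho_i \in \h^{2i-1, i}(\GL_n, R)$ by the naturality of these generators under the group inclusion $\imath_{n-1}$.

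The main obstacle is to access the cohomological generators $\rho_i \in \h^{2i-1, i}(\GL_n, R)$ and their naturality without appealing to the conclusion we are trying to prove; they must be produced by an independent construction (for example, pullbacks of Chern character classes from algebraic $K$-theory). The required input here is supplied by \cite{Pen:MWMotivicCoho}*{Rem. 3}, after which the induction closes and the proposition follows.
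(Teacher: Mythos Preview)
Your argument is correct and follows the same route the paper sketches: induct on $n$ using the cofibre sequence of \Cref{lem:in-1Hocofib}, and split the resulting triangle by lifting the generators $\rho_i$ along $\imath_{n-1}^*$ (this is exactly the mechanism of \Cref{lem:CohoImpliesSplit}, and the paper likewise defers the details to \cite{Pen:MWMotivicCoho}*{Rem.~3}). One small inaccuracy: $\h^{*,*}(\GL_{n-1},R)$ is not literally an exterior $\m_R$-algebra (see \Cref{lem:VCoho}), but since it is still generated as an $\m_R$-algebra by the $\rho_i$ and these lift, your surjectivity claim and hence the splitting go through unchanged.
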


\subsection{The motivic Rothenberg--Steenrod spectral sequence}

In this subsection, we assume that the pair $(k,R)$ satisfies the Beilinson--Soul\'{e} vanishing conjecture: the motivic cohomology groups $\h^{p,q}(\Spec k,R)$ are trivial when $p < 0$. Known examples are when $k$ is a number field or a finite field and $R$ is any coefficient ring, or when $k$ contains an algebraically closed field and $R = \Z/\ell$, with $\ell$ prime to the characteristic of $k$ (\cite{A:BSZ/l}). It is unknown if the vanishing conjecture holds for general pairs $(k,R)$. 

First, we recall the following definition. 
\begin{defn}\label{def:PrincBund}
    Let $G$ be a group object in $Sh_{\text{Nis}}(\cat{Sm}_k)$. Suppose $Y$ is a Nisnevich $G$-sheaf with action map $a\co G \times Y \to Y$ and $Z$ is a Nisnevich sheaf on which $G$ acts trivially. A map of Nisnevich sheaves $\pi\co Y \to Z$ is a \textit{principal $G$-bundle} if $\pi$ is a $G$-equivariant epimorphism (of sheaves) and the map
    \[
		a \times \pr_2\co  G \times Y \to Y \times_Z Y
    \]
    is an isomorphism, where $\pr_2$ is projection onto the second factor. We say $\pi\co Y \to Z$ is \textit{Nisnevich (resp. Zariski) locally trivial} if there is a Nisnevich (resp. Zariski) cover $\{f_i\co U_i \to Z\}$ and sections $s_i\co U_i \to Y$ such that $\pi s_i = f_i$.
\end{defn}

The motivic Rothenberg--Steenrod spectral sequence computes the equivariant motivic cohomology of a scheme $Y$ with respect to a free group action by a group scheme $G$. The spectral sequence is developed in \cite{Wil:EquivariantMotivicCoho} for more general bigraded cohomology theories and those groups schemes which are finite cell complexes in the sense of \cite{DI:MotivicCellStructures}. This latter condition guarantees a K\"{u}nneth isomorphism for $G \times Y$, an important ingredient in determining the $\E_2$-page.

The classical version of this spectral sequence originally developed in \cite{RS:CohoOfClassifyingSpaces} and sometimes goes under the name ``Eilenberg-Moore spectral sequence" in the literature. However, it is not the usual Eilenberg-Moore spectral sequence converging to the cohomology of a pullback. 

Given a coefficient ring $R$, the inputs for the motivic Rothenberg--Steenrod spectral sequence are the motivic cohomology rings $\h^{*,*}(G,R)$ and $\h^{*,*}(Y,R)$ as well as the module structure of the dual of $\h^{*,*}(Y,R)$ as a module over the dual of $\h^{*,*}(G,R)$; this structure being induced by the action map $G \times Y \to Y$.  

\begin{notation}
    Let $\m_R$ denote the motivic cohomology ring $\h^{*,*}(\Spec k,R)$. For a bigraded module $N^{*,*}$ over the bigraded ring $\m_R$, we let $\widehat{N}$ denote the dual bigraded $\m_R$-module.
\end{notation}

The $\m_R$-algebra $\h^{*,*}(G,R)$ is a Hopf algebra, and in the case that both $\h^{*,*}(G,R)$ and $\h^{*,*}(Y,R)$ are finitely generated $\m_R$-modules (as in all the cases considered in this paper), the action map 
\[
	a\co  G \times Y \to Y
\]
induces a module structure on $\widehat{\h^{*,*}(Y,R)}$ over the ring $\widehat{\h^{*,*}(G,R)}$. The following proposition is a combination of \cite{Wil:EquivariantMotivicCoho}*{Thm. 1.15} and \cite{Wil:EquivariantMotivicCoho}*{Thm. 2.4}. 
\begin{prop}\label{prop:RSSS}
    Suppose the Beilinson-Soul\'{e} vanishing conjecture holds for the pair $(k,R)$. Let $G$ be a smooth group scheme over $k$ which is also a finite cell complex, and suppose $G$ acts on a smooth $k$-scheme $Y$ and acts trivially on a smooth $k$-scheme $Z$ such that $Y \to Z$ is a Nisnevich-locally trivial principal $G$-bundle. Suppose further that $\h^{*,*}(G,R)$ and $\h^{*,*}(Y,R)$ are finitely generated, free $\m_R$-modules generated by elements in the first quadrant. Then there is a trigraded strongly convergent spectral sequence of $\m_R$-algebras:
    \[
		\E_2^{l,p,q} = \Ext^l_{\widehat{\h^{*,*}(G,R)}}(\widehat{\h^{p,q}(Y,R)}, \m_R) \Rightarrow \h^{l+p,q}(Z,R)
    \]
    functorial in $G$, $Y$, and $R$ with differentials $d_s\co \E_s^{l,p,q} \to \E_s^{l+s,p-s+1,q}$. 
\end{prop}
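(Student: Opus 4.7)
The plan is to construct the spectral sequence geometrically from a two-sided bar resolution of $Z$ by copies of $G \times Y$, and then identify the $\E_2$-page by running K\"unneth on each simplicial level and dualizing into a cobar complex.

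First I would form the simplicial motivic space $B_\bullet(*,G,Y)$ whose space of $n$-simplices is $G^{\times n} \times Y$, with face maps built from the multiplication $G \times G \to G$, the action $a \co G \times Y \to Y$, and the terminal map on the leftmost factor, and degeneracies coming from unit insertion. Because $\pi\co Y \to Z$ is a Nisnevich-locally trivial principal $G$-bundle, the isomorphism $G \times Y \xrar{\sim} Y \times_Z Y$ from \Cref{def:PrincBund} implies that $B_\bullet(*,G,Y)$ is the \v{C}ech nerve of $\pi$ (up to the usual reindexing). Nisnevich descent together with $\A^1$-invariance then implies that the natural augmentation $|B_\bullet(*,G,Y)| \to Z$ is an $\A^1$-weak equivalence; in particular it induces an isomorphism on motivic cohomology.

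Next I would filter $|B_\bullet(*,G,Y)|$ by its simplicial skeleta and take the associated spectral sequence of the resulting tower in $\cat{DM}(k,R)$. This gives a trigraded spectral sequence whose $\E_1$-term is (up to normalization) the cosimplicial bigraded $\m_R$-module $\h^{p,q}(G^{\times l} \times Y, R)$ with differentials induced by the simplicial structure, and which is set up to converge to $\h^{l+p,q}(Z,R)$. The hypothesis that $G$ is a finite cell complex in the sense of \cite{DI:MotivicCellStructures} supplies a K\"unneth isomorphism on each simplicial level, and the freeness of $\h^{*,*}(G,R)$ and $\h^{*,*}(Y,R)$ as $\m_R$-modules allows us to dualize without derived correction terms. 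Under this dualization the cosimplicial object identifies with the cobar complex for the coaction of the Hopf algebra $\h^{*,*}(G,R)$ on $\h^{*,*}(Y,R)$, equivalently the bar complex for the $\widehat{\h^{*,*}(G,R)}$-module $\widehat{\h^{*,*}(Y,R)}$. Passing to the $\E_2$-page then produces exactly
\[
    \E_2^{l,p,q} = \Ext^l_{\widehat{\h^{*,*}(G,R)}}\!\bigl(\widehat{\h^{p,q}(Y,R)},\m_R\bigr),
\]
with the stated differentials $d_s \co \E_s^{l,p,q} \to \E_s^{l+s,p-s+1,q}$. Multiplicativity of the spectral sequence as $\m_R$-algebras follows because the bar construction is symmetric monoidal and each step above is compatible with cup products.

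The main obstacle is \emph{strong convergence}. Unlike the topological setting, a spectral sequence arising from a simplicial diagram of motivic spectra need not converge: motivic cohomology can \emph{a priori} be non-trivial in arbitrarily negative cohomological degrees, so the tower of skeletal filtrations of $|B_\bullet|$ need not be bounded below in each bidegree $(p,q)$. This is precisely where the Beilinson--Soul\'e vanishing hypothesis enters: it forces $\h^{p,q}(\Spec k, R) = 0$ for $p < 0$, hence (combined with the assumed freeness and positivity of generators of $\h^{*,*}(G,R)$ and $\h^{*,*}(Y,R)$) implies that for each fixed $(p,q)$ only finitely many simplicial degrees $l$ contribute to $\E_1^{l,p,q}$. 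This is the content of \cite{Wil:EquivariantMotivicCoho}*{Thm. 1.15}, and together with the $\E_2$-identification from \cite{Wil:EquivariantMotivicCoho}*{Thm. 2.4} yields the proposition. Functoriality in $G$, $Y$, and $R$ is then automatic from the functoriality of the bar construction.
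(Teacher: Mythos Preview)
Your proposal is correct and in fact contains substantially more detail than the paper itself provides: the paper simply states that the proposition is a combination of \cite{Wil:EquivariantMotivicCoho}*{Thm.~1.15} and \cite{Wil:EquivariantMotivicCoho}*{Thm.~2.4}, with no further argument. Your sketch of the bar construction, the skeletal filtration, the K\"unneth identification of the $\E_1$-page, and the role of Beilinson--Soul\'e vanishing in securing strong convergence is exactly how those two results are established in the cited reference, so you have recovered the underlying argument rather than merely the citation.
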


\section{Automorphism bundles over flag varieties}\label{sec:AutBundles}

In this section, we introduce the $k$-schemes of central interest, denoted $A(n_1, \dots, n_r)$. We make no assumptions on $k$ or $R$. In fact, this section can be carried out over $\Spec \Z$. 

Given a sequence of nonnegative integers $n_1 < n_2 < \dots < n_r$, let $\Fl(n_1, \dots, n_r)$ denote the flag variety of signature $(n_1, \dots, n_r)$ in $n_r$-space. The functor represented by $\Fl(n_1, \dots, n_r)$ sends a commutative $k$-algebra $A$ to isomorphism classes of diagrams
\begin{equation}\label{eq:FlFOP}
    \begin{tikzcd}
        Q_r = A^{n_r} \rar["q_{r-1}"] & Q_{r-1} \rar[ "q_{r-2}"] & \cdots \rar["q_2"] & Q_2 \rar["q_1"] & Q_1
    \end{tikzcd}
\end{equation}
such that, for each $i$, the map $q_i$ is an $A$-module epimorphism and $Q_i$ is a projective $A$-module of rank $n_i$. An isomorphism of two such diagrams is a collection of $A$-module isomorphisms $h_i\co Q_i \to Q_i'$, with $h_r = \id$, that form a commuting ladder
\begin{equation}\label{eq:FlFOPIso}
    \begin{tikzcd}
        A^{n_r} \rar["q_{r-1}"] \dar[equals] & Q_{r-1} \rar["q_{r-2}"] \dar["h_{r-1}","\cong"'] & \cdots \rar["q_2"] & Q_2 \rar["q_1"] \dar["h_2","\cong"'] & Q_1 \dar["h_1","\cong"'] \\
        A^{n_r} \rar["q_{r-1}'"] & Q_{r-1}' \rar["q_{r-2}'"] & \cdots \rar["q_2'"] & Q_2' \rar["q_1'"] & Q_1' \mathrlap{\; .}
    \end{tikzcd} 
\end{equation}
An important special case is $\Fl(m,n) = \Gr(m,n)$. We allow for $n_1=0$, in which case we identify $\Fl(0,n_2, \dots, n_r) = \Fl(n_2, \dots, n_r)$.

Let $\Fl'(n_1, \dots, n_r)$ denote the $k$-scheme that represents the functor that sends a $k$-algebra $A$ to isomorphism classes of tuples $(q_i,s_i)_{i=1}^{r-1}$, where $q_i\co  Q_{i+1} \to Q_i$ are surjections as in \eqref{eq:FlFOP} and $s_i\co Q_i \to Q_{i+1}$ is a section of $q_i$ for each $i$. An isomorphism from $(q_i,s_i)_i$ to $(q_i',s_i')_i$ is a collection of $A$-module isomorphisms $h_i\co Q_i \to Q_i'$, with $h_r = \id$, such that $h_iq_i = q_i'h_{i+1}$ and $h_{i+1}s_i = s_i'h_i$. We define $\Gr'(m,n) := \Fl'(m,n)$. 

Next, define $V'(m,n)$ to the be affine subvariety of $\Mat_{m \times n} \times \Mat_{n \times m}$ consisting of pairs of matrices $(q,s)$ such that $qs=I_m$, the $m \times m$ identity matrix. There is a map $V'(m,n) \to \Gr'(m,n)$ which on $A$-points sends a pair $(q,s)$ to its isomorphism class. Projection to the first factor yields a map $\pr_1\co  V'(m,n) \to V(m,n)$, where $V(m,n)$ is the usual Stiefel variety of full-rank $m \times n$ matrices. This descends to a map $\overline{\pr}_1\co  \Gr'(m,n) \to \Gr(m,n)$, and these maps fit into a pullback square
\[
    \begin{tikzcd}
        V'(m,n) \dar \rar["\pr_1"] \ar[phantom, dr, pos=0, "\lrcorner"] & V(m,n) \dar \\
        \Gr'(m,n) \rar["\overline{\pr}_1"] & \Gr(m,n)
    \end{tikzcd}
\]
in $\cat{Sm}_k$. Recall that the standard affine cover of $V(m,n)$ consists of (Zariski) opens $V_I \subseteq V(m,n)$ indexed by sequences $I = (i_1, \dots, i_m)$ with $1 \leq i_1 < \cdots < i_m \leq n$ such that the minor corresponding to $I$ is nonvanishing. Let $U_I \subseteq \Gr(m,n)$ denote the standard affine open corresponding to $I$, defined similarly. The next lemma is straightforward. 

\begin{lem}\label{lem:VAffineBund}
    The projection $\pr_1\co  V'(m,n) \to V(m,n)$ is an affine-space bundle that trivializes over the standard affine cover of $V(m,n)$.
\end{lem}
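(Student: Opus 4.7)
The plan is to exhibit an explicit trivialization of $\pr_1$ over each $V_I$ in the standard affine cover, identifying it with a trivial affine-space bundle of rank $m(n-m)$.

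First, I would reduce notationally to the case $I = (1, \dots, m)$; the other cases follow by permuting columns. Over $V_I$ an $m \times n$ matrix $q$ splits as $q = [q_I \mid q_{I^c}]$ with $q_I \in \GL_m$, and an $n \times m$ matrix $s$ splits by rows into blocks $s_I$ (of size $m \times m$) and $s_{I^c}$ (of size $(n-m) \times m$). The defining equation $qs = I_m$ becomes $q_I s_I + q_{I^c} s_{I^c} = I_m$. Since $q_I$ is invertible over $V_I$, this equation uniquely solves for $s_I$ in terms of $s_{I^c}$:
\[
    s_I = q_I^{-1}\bigl(I_m - q_{I^c} s_{I^c}\bigr).
\]
Thus the block $s_{I^c}$ is a free parameter in $\Mat_{(n-m) \times m} \cong \A^{m(n-m)}$, and $s_I$ is determined.

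Using this, I would define a morphism of $V_I$-schemes
\[
    V_I \times \A^{m(n-m)} \to \pr_1^{-1}(V_I), \qquad (q,\,T) \mapsto \bigl(q,\, s(q,T)\bigr),
\]
where $s(q,T)$ is the $n \times m$ matrix whose $I^c$-rows are $T$ and whose $I$-rows are $q_I^{-1}(I_m - q_{I^c} T)$. An inverse is given by sending $(q,s) \in \pr_1^{-1}(V_I)$ to $(q, s_{I^c})$. Both maps are morphisms of $k$-schemes (the entries of $q_I^{-1}$ are polynomials in the entries of $q_{I^c}$ divided by the $I$-minor, which is invertible on $V_I$), and they are mutually inverse by the calculation above. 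In particular $\pr_1$ trivializes over each $V_I$ as a trivial $\A^{m(n-m)}$-bundle.

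There is no real obstacle here; the argument is essentially linear algebra over the coordinate ring of $V_I$. The only mild subtlety is to check that the formulas make sense Zariski-locally (i.e.\ that the $I$-minor is genuinely invertible on $V_I$, which is the definition of the standard affine open), and that permuting columns to reduce to $I = (1, \dots, m)$ gives compatible trivializations on overlaps — but this is automatic because the construction is canonical in the index set $I$.
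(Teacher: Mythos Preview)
Your argument is correct and is exactly the natural one; the paper omits the proof entirely, simply declaring the lemma ``straightforward.'' One tiny slip: in your parenthetical you write that the entries of $q_I^{-1}$ are polynomials in the entries of $q_{I^c}$ divided by the $I$-minor, but of course they are polynomials in the entries of $q_I$ (Cramer's rule). Also, the remark about compatibility on overlaps is not needed for the statement---local triviality with affine-space fibres is all that is claimed.
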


\begin{lem}\label{lem:GrAffineBund}
    The morphism $\overline{\pr}_1\co  \Gr'(m,n) \to \Gr(m,n)$ is an affine-space bundle that trivializes over the standard affine cover of $\Gr(m,n)$.
\end{lem}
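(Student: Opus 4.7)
The plan is to deduce the result from Lemma \ref{lem:VAffineBund} by pulling the trivialization of $\pr_1$ back along a local section of the principal $\GL_m$-bundle $V(m,n) \to \Gr(m,n)$.

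First I would fix an index sequence $I = (i_1 < \cdots < i_m)$ and recall that $V(m,n) \to \Gr(m,n)$ is a Zariski-locally trivial principal $\GL_m$-bundle. Over the standard affine open $U_I \subseteq \Gr(m,n)$ its preimage is exactly $V_I$, and there is a canonical section $\sigma_I \co U_I \to V_I$ obtained by sending a quotient $q$ to the unique matrix representative whose $I$-indexed $m \times m$ minor is the identity. This section realizes $V_I$ as $U_I \times \GL_m$ over $U_I$, with $\sigma_I(U_I)$ corresponding to $U_I \times \{e\}$.

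Second, the square displayed just before the lemma is Cartesian, so $V'(m,n) \to \Gr'(m,n)$ is the base change of $V(m,n) \to \Gr(m,n)$. Restricted over $V_I$ this gives
\[
    V'(m,n)|_{V_I} \;\cong\; \Gr'(m,n)|_{U_I} \times_{U_I} V_I,
\]
and pulling back along $\sigma_I$ yields a canonical identification $V'(m,n)|_{\sigma_I(U_I)} \cong \Gr'(m,n)|_{U_I}$ of schemes over $U_I$.

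Finally, I apply Lemma \ref{lem:VAffineBund} to obtain a trivialization $V'(m,n)|_{V_I} \cong V_I \times \A^{m(n-m)}$ of the affine bundle $\pr_1$ over $V_I$, and restrict it along $\sigma_I$ to get $V'(m,n)|_{\sigma_I(U_I)} \cong U_I \times \A^{m(n-m)}$ over $U_I$. Combining with the identification from the previous step produces $\Gr'(m,n)|_{U_I} \cong U_I \times \A^{m(n-m)}$, which is the required trivialization.

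I do not expect any significant obstacle: the two preceding lemmas supply the real content. The only point requiring a little care is arranging the two identifications so that each is pulled back along the same section $\sigma_I$, after which no compatibility between the two trivializations themselves is needed.
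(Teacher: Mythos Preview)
Your proof is correct and follows essentially the same approach as the paper: both identify $\overline{\pr}_1|_{U_I}$ with the restriction of $\pr_1$ along the canonical section $\sigma_I\co U_I \to V(m,n)$ (what the paper calls the closed embedding $U_I \to V(m,n)$), and then invoke Lemma~\ref{lem:VAffineBund}. Your version simply makes the use of the Cartesian square more explicit.
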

\begin{proof}
    Let $U_I \subseteq \Gr(m,n)$ be a standard affine open. There is a closed embedding $U_I \to V(m,n)$ which trivializes the principal $\GL_m$-bundle $V(m,n) \to \Gr(m,n)$, and over $U_I$ the map $\overline{\pr}_1$ is the restriction of the affine space bundle 
    \[
        \pr_1\co  V'(m,n) \to V(m,n)
    \]
    of \Cref{lem:VAffineBund}.
\end{proof}

\begin{rem}
    Lemmas \ref{lem:VAffineBund} and \ref{lem:GrAffineBund} are examples of Jouanolou's device (\cite{Wei:HomotopyAlgebraicKTheory}*{Sec. 4}).
\end{rem}

There is a map 
\[
    \pi\co  \Fl'(n_1, \dots, n_r) \to \Fl(n_1, \dots, n_r)
\]
which forgets the sections $s_i$. Also, for each $i = 1, \dots, r-1$, there is a morphism $k$-varieties $\Fl(n_1, \dots, n_r) \to \Gr(n_i,n_r)$ which on points sends the diagram \eqref{eq:FlFOP} to the isomorphism class of the surjection
\[
    q_{k-1} \circ \cdots \circ q_i\co  A^{n_r} \to Q_i. 
\]
The product of these maps is a closed immersion 
\[
    j\co \Fl(n_1, \dots, n_r) \to \prod_{i=1}^{r-1} \Gr(n_i,n_r).
\]
There is a completely analogous map 
\[
    j'\co \Fl'(n_1, \dots, n_r) \to \prod_{i=1}^{r-1} \Gr'(n_i,n_r)
\]
which fits into the commuting square
\begin{equation}\label{eq:Fl'PB}
    \begin{tikzcd}
        \Fl'(n_1, \dots, n_r) \rar["j'"] \dar["\pi"] & \prod_i \Gr'(n_i,n_r) \dar["\prod \overline{\pr}_1"] \\
        \Fl(n_1, \dots, n_r) \rar["j"] & \prod_i \Gr(n_i,n_r)
    \end{tikzcd}
\end{equation}
\begin{lem}\label{lem:Fl'PB}
    The diagram \eqref{eq:Fl'PB} is a pullback in $\cat{Sm}_k$.
\end{lem}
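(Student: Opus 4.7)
The plan is to verify the pullback property by comparing functors of points. Fix a commutative $k$-algebra $A$. An $A$-point of the pullback consists of a flag of quotients $(q_i)_{i=1}^{r-1}$ in $\Fl(n_1,\dots,n_r)(A)$ together with, for each $i$, an iso class of triples $(\tilde{Q}_i, \tilde{q}_i, \tilde{s}_i) \in \Gr'(n_i, n_r)(A)$ subject to the constraint that $(\tilde{Q}_i, \tilde{q}_i)$ and $(Q_i, q_i \circ \cdots \circ q_{r-1})$ represent the same class in $\Gr(n_i, n_r)(A)$. Choosing compatible representatives, a pullback datum becomes a flag $(q_i)$ equipped with, for each $i$, a section $\tilde{s}_i : Q_i \to A^{n_r}$ of the composition $\tilde{q}_i := q_i \circ \cdots \circ q_{r-1}$.

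Next I would describe $j'$ explicitly on points: it sends $(q_i, s_i)_i$, with $s_i: Q_i \to Q_{i+1}$ a section of $q_i$, to the tuple of pairs $(\tilde{q}_i, \tilde{s}_i)_i$ with $\tilde{s}_i := s_{r-1} \circ s_{r-2} \circ \cdots \circ s_i$, an iterated composite that is automatically a section of $\tilde{q}_i$. The candidate inverse from the pullback side to $\Fl'$ would take the $\tilde{s}_i$ and produce level-by-level sections via the formula $s_i := q_{i+1} \circ \cdots \circ q_{r-1} \circ \tilde{s}_i : Q_i \to Q_{i+1}$; the identity $q_i \circ s_i = \tilde{q}_i \circ \tilde{s}_i = \mathrm{id}$ shows this is a section of $q_i$. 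The main calculation is then to check that these two assignments descend to mutually inverse bijections on iso classes, functorially in $A$; in particular, the isomorphism relations defined for $\Fl'$ (compatible ladders of $h_i$) correspond under $j'$ with the componentwise isomorphism relations defining $\prod_i \Gr'(n_i, n_r)$.

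As a cleaner alternative — and likely the route I would take in practice — I would verify the pullback property Zariski-locally on $\prod_i \Gr(n_i, n_r)$. Covering the target by products $U_{I_1} \times \cdots \times U_{I_{r-1}}$ of the standard affine opens, the projection $\prod \overline{\pr}_1$ trivializes factor by factor by \Cref{lem:GrAffineBund}. On the preimage $j^{-1}(U_{I_1} \times \cdots \times U_{I_{r-1}}) \subseteq \Fl(n_1, \dots, n_r)$, both the pullback and the restriction of $\pi\co\Fl'(n_1, \dots, n_r) \to \Fl(n_1, \dots, n_r)$ become trivial affine-space bundles, which can be identified explicitly by reading off the section data on each factor and using that the relevant $Q_i$ and compositions are rigidified by the chosen affine chart. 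The main obstacle I expect is keeping track of the compatibility between the global section $\tilde{s}_i$ appearing on the pullback side and the iteratively constructed $s_i$ on the $\Fl'$ side so that the two local trivializations glue to a canonical isomorphism over all of $\Fl(n_1, \dots, n_r)$.
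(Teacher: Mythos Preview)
Your approach—verifying the pullback on functors of points—is exactly what the paper's one-line proof gestures at. But the key check fails: your two assignments are not mutual inverses. The composite $(s_i)\mapsto(\tilde s_i)\mapsto(s_i')$ does telescope back to $(s_i)$, since each $q_j s_j=\id$. The other composite does not. Starting from an arbitrary tuple $(\tilde s_i)$ of sections of the $\tilde q_i$ on the pullback side and setting $s_i:=\tilde q_{i+1}\tilde s_i$, already for $r=3$ one finds $s_2=\tilde s_2$, $s_1=q_2\tilde s_1$, and hence $s_2 s_1=\tilde s_2\, q_2\, \tilde s_1$; this equals $\tilde s_1$ only when $\im\tilde s_1\subseteq\im\tilde s_2$. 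Nothing in the fibre product forces the images of the $\tilde s_i$ to be nested, so the map you wrote down is a one-sided inverse only.

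This is not a bookkeeping issue: a dimension count shows the square is \emph{not} a pullback once $r\ge 3$. The fibre of $\pi$ over a fixed flag has dimension $\sum_{i=1}^{r-1} n_i(n_{i+1}-n_i)$, whereas the fibre of $\prod_i\overline{\pr}_1$ (and hence of its pullback along $j$) has dimension $\sum_{i=1}^{r-1} n_i(n_r-n_i)$; these differ by $\sum_{i} n_i(n_r-n_{i+1})>0$ as soon as some $n_{i+1}<n_r$. So neither the functor-of-points route nor your local-trivialisation alternative can close the gap—the statement as written is incorrect, and the paper's ``exercise'' does not in fact go through. The downstream consequence you actually need, \Cref{cor:FlAffBund}, is nonetheless true and can be argued directly: over a standard affine chart of $\Fl(n_1,\dots,n_r)$ each $q_i$ is rigidified, and the space of tuples $(s_i)$ of sections is then visibly an affine space.
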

\begin{proof}
Since the Yoneda embedding $\cat{Sm}_k \to Pre(\cat{Sm}_k)$ preserves limits, this is an exercise in tracing through the functor of points definitions of these $k$-schemes.
\end{proof}

\begin{cor}\label{cor:FlAffBund}
    The map $\pi\co \Fl'(n_1, \dots, n_r) \to \Fl(n_1, \dots, n_r)$ is a Zariski-locally trivial affine space bundle. In particular, $\pi$ is an $\A^1$-weak equivalence.
\end{cor}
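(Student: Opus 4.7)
The plan is to exploit the pullback square \eqref{eq:Fl'PB} from \Cref{lem:Fl'PB}, together with the structure of $\overline{\pr}_1$ established in \Cref{lem:GrAffineBund}, to reduce the statement to the fact that affine-space bundles pull back to affine-space bundles.

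First, I would observe that the right vertical map $\prod \overline{\pr}_1\co \prod_i \Gr'(n_i,n_r) \to \prod_i \Gr(n_i,n_r)$ is itself a Zariski-locally trivial affine-space bundle: by \Cref{lem:GrAffineBund}, each factor $\overline{\pr}_1\co \Gr'(n_i,n_r) \to \Gr(n_i,n_r)$ trivializes over the standard affine cover $\{U_I\}$ of $\Gr(n_i,n_r)$, so the product trivializes over the cover of $\prod_i \Gr(n_i,n_r)$ given by products of such standard opens. Over each such open box, the total map is a trivial affine-space bundle (a product of trivial bundles).

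Next, I would use that $j\co \Fl(n_1,\dots,n_r)\to \prod_i \Gr(n_i,n_r)$ is a morphism of schemes (indeed a closed immersion), so pulling back the above cover along $j$ yields a Zariski cover $\{W_I\}$ of $\Fl(n_1,\dots,n_r)$. Since \eqref{eq:Fl'PB} is a pullback by \Cref{lem:Fl'PB}, restricting $\pi$ to each $W_I$ gives the pullback of a trivial affine-space bundle, hence is itself a trivial affine-space bundle of the same rank. This shows $\pi$ is a Zariski-locally trivial affine-space bundle.

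For the final assertion, I would appeal to the standard fact that any Zariski-locally trivial affine-space bundle is an $\A^1$-weak equivalence: trivial affine-space bundles $\A^n\times U \to U$ are $\A^1$-weak equivalences by projection, and the local-to-global property of $\A^1$-weak equivalences (via Mayer--Vietoris / Nisnevich descent on $\cat{Spc}(k)$) then promotes this local statement to a global one. There is no real obstacle here; the only thing to be careful about is that the trivialization over each $W_I$ is compatible with being a pullback in $\cat{Sm}_k$, which is automatic from \Cref{lem:Fl'PB}.
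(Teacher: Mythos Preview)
Your proposal is correct and follows essentially the same approach as the paper: use \Cref{lem:GrAffineBund} to see that $\prod \overline{\pr}_1$ is a Zariski-locally trivial affine-space bundle, then pull back along $j$ via the square of \Cref{lem:Fl'PB}. The paper's proof is terser (it just cites the two lemmas and the fact that finite products of Zariski-locally trivial affine-space bundles are again such), but your explicit unpacking of the trivializing cover and the $\A^1$-weak equivalence assertion is the same argument.
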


\begin{proof}
    This follows from Lemmas \ref{lem:GrAffineBund} and \ref{lem:Fl'PB} and the fact that a finite product of Zariski-locally trivial affine space bundles is a Zariski-locally trivial affine space bundle. 
\end{proof}

\begin{construction} 
    We define $A(n_1, \dots, n_r)$ to be the functor which sends a commutative $k$-algebra $A$ to isomorphism classes of pairs $((q_i,s_i)_i, g)$ consisting of an $A$-point $(q_i,s_i)_i$ of $\Fl'(n_1, \dots, n_r)$ together with an $A$-module automorphism $g$ of $Q_1$. An isomorphism from $((q_i,s_i),g)$ to $((q_i',s_i'),g')$ is an isomorphism $(h_i)_i$ from $(q_i,s_i)_i$ to $(q_i',s_i')_i$ (in the sense of \eqref{eq:FlFOPIso}) such that $h_1g = g'h_1$. We denote the isomorphism class of $((q_i,s_i)_i,g)$ in $A(n_1, \dots, n_l)(A)$ by $[(q_i,s_i)_i,g]$. Note that when $n_1=0$ we have $A(0,n_2, \dots, n_r) = \Fl'(n_2, \dots, n_r)$.
\end{construction}

\begin{lem}\label{lem:ARepresentable}
    The presheaf $A(n_1, \dots, n_r)$ is represented by a smooth $k$-scheme. 
\end{lem}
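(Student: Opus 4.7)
The plan is to realize $A(n_1, \dots, n_r)$ as the automorphism group scheme of the universal rank-$n_1$ quotient sheaf over $\Fl'(n_1, \dots, n_r)$. First I would observe that, by representability of $\Fl'(n_1, \dots, n_r)$, the identity morphism picks out a universal isomorphism class of tuples, and any representative $(q_i^{\mathrm{univ}}, s_i^{\mathrm{univ}})_i$ furnishes a locally free rank-$n_1$ sheaf $Q_1^{\mathrm{univ}}$ on $\Fl'(n_1, \dots, n_r)$. That this sheaf is canonically defined up to unique isomorphism follows from the rigidification $h_r = \id$: the relations $h_i q_i = q_i' h_{i+1}$ together with the surjectivity of the $q_i$ determine each $h_i$ inductively from $h_{i+1}$, so the automorphism groupoid of any tuple in $\Fl'(n_1, \dots, n_r)(A)$ is trivial.

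Next I would form the automorphism scheme $\underline{\mathrm{Aut}}(Q_1^{\mathrm{univ}}) \to \Fl'(n_1, \dots, n_r)$, i.e.\ the open subscheme of the rank-$n_1^2$ endomorphism bundle $\underline{\mathrm{End}}(Q_1^{\mathrm{univ}})$ cut out by nonvanishing of the determinant. Over any Zariski open $U \subseteq \Fl'(n_1, \dots, n_r)$ trivializing $Q_1^{\mathrm{univ}}$ this scheme restricts to $U \times \GL_{n_1}$, making it a Zariski-locally trivial $\GL_{n_1}$-bundle over a smooth base, and hence a smooth $k$-scheme. The identification $A(n_1, \dots, n_r) \cong \underline{\mathrm{Aut}}(Q_1^{\mathrm{univ}})$ is then a Yoneda exercise: an $A$-point of the right-hand side is a map $\Spec A \to \Fl'(n_1, \dots, n_r)$ together with an automorphism of the pulled-back $Q_1$, which is exactly the data recorded by the left-hand side once isomorphism classes of tuples $(q_i, s_i)_i$ are identified with morphisms into $\Fl'(n_1, \dots, n_r)$.

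The hard part will be the rigidification step: verifying that fixing $h_r = \id$ uniquely pins down every isomorphism of tuples, which is what ensures that $Q_1^{\mathrm{univ}}$, and hence the automorphism scheme $\underline{\mathrm{Aut}}(Q_1^{\mathrm{univ}})$, is canonically defined on $\Fl'(n_1, \dots, n_r)$. Once this is in place, the remaining steps are formal manipulations of representable functors.
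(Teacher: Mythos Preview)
Your argument is correct, but it takes a different route from the paper's. The paper argues by induction on $r$: it uses the map $f\co A(n_1,\dots,n_r)\to \Gr'(n_{r-1},n_r)$ sending $[(q_i,s_i)_i,g]$ to the class of $(q_{r-1},s_{r-1})$, and over each standard open $U_I'\subseteq\Gr'(n_{r-1},n_r)$ the pullback is $U_I'\times A(n_1,\dots,n_{r-1})$, which is smooth and representable by the inductive hypothesis. So the paper peels off the outermost flag step and inducts, whereas you go straight to the base $\Fl'(n_1,\dots,n_r)$ and identify $A(n_1,\dots,n_r)$ globally with $\underline{\mathrm{Aut}}(Q_1^{\mathrm{univ}})$.

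Your rigidity observation---that $h_r=\id$ together with the surjectivity of each $q_i$ forces every $h_i$ uniquely via $h_iq_i=q_i'h_{i+1}$---is exactly what makes the universal $Q_1^{\mathrm{univ}}$ well-defined up to \emph{unique} isomorphism, and hence makes the Yoneda identification with the automorphism scheme go through cleanly. The paper does not make this observation explicit in the proof of representability; it only records the automorphism-bundle description afterwards (as a consequence, in \Cref{rem:AutBundle}). Your approach thus buys that remark for free and avoids the induction entirely, at the cost of needing the rigidity step up front. Both arguments ultimately rely on the same local picture (trivially $U\times\GL_{n_1}$ over a trivializing open), so neither is strictly more elementary---just differently organized.
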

\begin{proof}
    That $A(n_1, \dots, n_r)$ is a sheaf for the Zariski topology follows from the fact that projective modules and maps between them glue. We show that $A(n_1, \dots, n_r)$ can be covered by smooth representable functors by induction on $r$. For the base case, we have $A(n) = \GL_n$. Now suppose that $A(n_1, \dots, n_{r-1})$ is represented by a smooth $k$-scheme. There is a map of Zariski sheaves
    \[
        f\co  A(n_1, \dots, n_r) \to \Gr'(n_{r-1},n_r)
    \]
    which on points sends $[(q_i,s_i)_i,g]$ to the isomorphism class of the pair $(q_{r-1},s_{r-1})$ in $\Gr'(n_{r-1},n_r)(A)$. If we let $U_I' := \overline{\pr}_1^{-1}(U_I) \subseteq \Gr'(n_{r-1},n_r)$, then the pullback of $U_I'$ under $f$ is the representable sheaf $U_I' \times A(n_1, \dots, n_{r-1})$. This furnishes a cover by smooth representables.
\end{proof}

\begin{rem}
    The scheme $A(1,n)$ is $\Gr'(1,n) \times \G_m$. There is a map 
    \[
        f^1(1,n)\co  \Gr'(1,n) \times \G_m \to \GL_n
    \]
    studied in \cite{Wil:MotivicCoho} (and \cite{Ron:EndomorphismsofP2} for the case of $\SL_n$) which factors through the motivic space $\Sigma^{1,1} \Gr'(1,n)_+ = (\Gr'(1,n) \times \G_m) / (\Gr'(1,n) \times \{1\})$. The following construction generalizes $f^1(1,n)$.
\end{rem}

\begin{construction}\label{const:f,p}
    For any $j \in \{1, \dots, r-1\}$, we construct a morphism 
    \[
        f^j(n_1, \dots, n_r)\co  A(n_1, \dots, n_r) \to A(n_1, \dots, \underline{n}_j, \dots,  n_r),
    \]
    where the underline denotes an omitted index, as follows. Suppose that the isomorphism class of $((q_i,s_i)_i,g)$ is an $A$-point of $A(n_1, \dots, n_r)$. If $j=1$, note that the splitting map $s_1\co Q_1 \to Q_2$ defines an isomorphsim $\psi\co  Q_2 \to \ker q_1 \oplus Q_1$. Now define an automorphism of $Q_2$ by 
    \[
    \begin{tikzcd}
        \tilde{g} \co  Q_2 \rar["\psi"] & \ker q_1 \oplus Q_1 \rar["id \oplus g"] & \ker q_1 \oplus Q_1 \rar["\psi^{-1}"] & Q_2
    \end{tikzcd}
    \]
    and set $f^1(n_1, \dots, n_r)[(q_i, s_i)_{i=1}^{r-1},g] = [(q_i,s_i)_{i=2}^{r-1}, \tilde{g}]$. If $j > 1$, we send $[(q_i,s_i)_{i=1}^{r-1},g]$ to the isomorphism class of the diagram
    \[
    \begin{tikzcd}
        Q_r = A^{n_r} \rar["q_{r-1}"'] & \cdots \lar[bend right, "s_{r-1}"', end anchor = north east, pos = 0.6] \rar["q_{i+1}"'] & Q_{i+1} \rar["q_{i-1}q_i"'] \lar[bend right, "s_{i+1}"', pos = 0.4] & Q_{i-1} \rar["q_{i-2}"'] \lar[bend right, "s_is_{i-1}"'] & \cdots \rar["q_1"'] \lar[bend right, "s_{i-2}"', pos = 0.6] & Q_1 \lar[bend right, "s_1"'] \ar[loop right, looseness = 5,"g"]
    \end{tikzcd}
    \]
    Straightforward diagram chases show that, in any case, $f^j(n_1, \dots, n_r)(A)$ is well-defined on isomorphism classes and natural in $A$ so defines a map of $k$-schemes.
\end{construction}

\begin{rem}\label{rem:AutBundle}
    Consider the map 
    \[
        p\co A(n_1, \dots, n_r) \to \Fl'(n_1, \dots, n_r),
    \]
    defined on points by $p(A)[(q_i,s_i)_{i=1}^{r-1},g] = [(q_i,s_i)_{i=1}^{r-1}]$. If $T$ denotes the tautological rank-$n_1$ bundle on $\Fl(n_1, \dots, n_r)$, then $p$ is the automorphism bundle associated with $\pi^*T$. The map $p$ thus has the structure of an algebraic bundle of groups for which 
    \[
        f^1(0,n_1, \dots, n_r)\co  A(0,n_1, \dots, n_r) = \Fl'(n_1, \dots, n_r) \to A(n_1, \dots, n_r)
    \]
    is the identity section. For this reason we will sometimes write ``$s_{\id}$'' in place of $f^1(0,n_1, \dots, n_r)$. The vector bundle $T$ and the affine-space bundle $\pi$ trivialize over the same cover of $\Fl(n_1, \dots, n_r)$ by affine spaces, so the bundle $p$ trivializes over a cover of $\Fl'(n_1, \dots, n_r)$ by affine spaces.
\end{rem}

\section{Decomposing the motive of \texorpdfstring{$A(n_1, \dots, n_r)$}{A(n1,...,nr)}}\label{sec:MotiveOfA}

The main result of this section is \Cref{prop:APureTate}, which shows that the motive $M(A(n_1, \dots, n_r))$ admits a certain Tate sum decomposition in $\cat{DM}(k,R)$. This is accomplished by way of a motivic version of the Leray-Hirsch theorem in topology (see \Cref{prop:MotivicLH}). 

Suppose $F \to E \xrar{f} B$ is a Zariski-locally trivial algebraic fibre bundle, and let $\alpha \in \h^{p,q}(E)$. Given any Zariski open $U$ in $B$, the class $\alpha|_{f^{-1}(U)}$ defines a map 
\[
    \alpha|_{f^{-1}(U)}\co  M(f^{-1}(U)) \to R(q)[p]
\]
in $\cat{DM}(k,R)$. Let $\phi_{\alpha,U}$ denote the composite
\[
    \begin{tikzcd}
        M(f^{-1}(U)) \rar["\Delta"] &[-1em] M(f^{-1}(U)) \otimes M(f^{-1}(U)) \rar["f_* \otimes (\alpha|_{f^{-1}(U)})"] &[3em] M(U) \otimes R(q)[p] = M(U)(q)[p].
    \end{tikzcd}
\]

The following motivic version of the Leray-Hirsch theorem is modeled on \cite{Pen:MWMotivicCoho}*{Lem. 2.4} and generalizes the projective bundle theorem \cite{MVW:MotivicCoho}*{Thm. 15.12}.

\begin{prop}[A Motivic Leray-Hirsch Theorem]\label{prop:MotivicLH}
    Suppose 
    \[
        \begin{tikzcd}
            F \rar & E \rar["f"] & B
        \end{tikzcd}
    \]
    is a Zariski-locally trivial fibre bundle in $\cat{Sm}_k$ with $F$ connected and $M(F)$ pure Tate. Suppose further that there are classes $\alpha_1, \dots, \alpha_n$, with $\alpha_i \in \h^{p_i,q_i}(E)$, and a finite Zariski cover $\{U_j\}$ of $B$ such that for all $j$ and all open $V \subseteq U_j$, the map
    \[
        \oplus_i \phi_{\alpha_i,V} \co  M(f^{-1}(V)) \to \bigoplus_{i=1}^n M(V)(q_i)[p_i]
    \]
    is an isomorphism. Then
    \[
        \oplus_i \phi_{\alpha_i,E}\co  M(E) \to \bigoplus_{i=1}^n M(B)(q_i)[p_i]
    \]
    is an isomorphism. Under this identification, the structure map $f$ induces the projection onto the factor $M(B)(0)[0]$.
\end{prop}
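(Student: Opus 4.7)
The approach is to induct on the size $m$ of the finite Zariski cover $\{U_j\}_{j=1}^m$, using Mayer--Vietoris distinguished triangles in $\cat{DM}(k,R)$ together with the triangulated five-lemma, mirroring the classical proof of the topological Leray--Hirsch theorem. The base case $m=1$ is immediate: the hypothesis applied with $V = B = U_1$ is the conclusion.

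For the inductive step, set $U := U_1 \cup \cdots \cup U_{m-1}$, so that $\{U, U_m\}$ is a Zariski cover of $B$. The pullback cover $\{f^{-1}(U), f^{-1}(U_m)\}$ of $f^{-1}(U \cup U_m)$ yields a Mayer--Vietoris distinguished triangle
\[
    M(f^{-1}(U \cap U_m)) \to M(f^{-1}(U)) \oplus M(f^{-1}(U_m)) \to M(f^{-1}(U \cup U_m)) \to M(f^{-1}(U \cap U_m))[1]
\]
in $\cat{DM}(k,R)$. Applying the exact functor $\bigoplus_i (-)(q_i)[p_i]$ to the analogous Mayer--Vietoris triangle for $\{U, U_m\}$ on $B$ produces a target triangle, and naturality of $\phi_{\alpha_i,-}$ with respect to open inclusions (itself a consequence of naturality of the diagonal, of motivic pushforward, and of restriction of cohomology classes) packages these maps into a morphism of distinguished triangles.

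The inductive hypothesis applied to the restricted bundle over $U$ (with cover $\{U_j\}_{j<m}$) and to $U \cap U_m$ (with cover $\{U_j \cap U_m\}_{j<m}$) shows that the vertical maps at $M(f^{-1}(U))$ and $M(f^{-1}(U \cap U_m))$ are isomorphisms; the required sub-hypotheses are inherited from the original, since any open of $U_j \cap U_m$ is open in $U_j$. The vertical map at $M(f^{-1}(U_m))$ is an isomorphism by the hypothesis with $V = U_m$. The triangulated five-lemma then delivers the required isomorphism on $M(E)$.

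For the final clause, connectedness of $F$ together with $M(F)$ being pure Tate forces a copy of $R(0)[0]$ to appear in any local decomposition and forces (up to reindexing) one of the classes to be $\alpha_1 = 1 \in \h^{0,0}(E)$ with $(p_1,q_1) = (0,0)$; the counit axiom for the coalgebra structure on $M(f^{-1}(V))$ identifies $\phi_{1,V}$ with the motivic pushforward $M(f|_{f^{-1}(V)})$, so globally $M(f)$ is projection onto the $M(B)(0)[0]$ factor. The main obstacle I anticipate is this final reindexing: making precise that the local hypothesis, in combination with connectedness of $F$, pins down (a choice of) one $\alpha_i$ as the unit class, so that the global projection statement is unambiguous. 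The Mayer--Vietoris induction itself is routine once naturality of $\phi_{\alpha_i,-}$ has been verified.
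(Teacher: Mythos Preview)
Your proposal is correct and follows essentially the same approach as the paper: Mayer--Vietoris induction on the number of opens in the cover, using naturality of $\phi_{\alpha_i,-}$ and the triangulated five-lemma. Your treatment is in fact more careful than the paper's (which is quite terse), in that you explicitly check the inductive hypothesis descends to $U \cap U_m$ and you address the final projection clause, which the paper's proof leaves implicit.
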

\begin{proof}
    The maps $\phi_{\alpha_i,U}$ are natural in $U$. Consider the map of Mayer-Vietoris triangles
    \[
        \begin{tikzcd}[column sep = .9em]
            M(f^{-1}(U \cap V)) \dar["\oplus_i \phi_{\alpha_i,U \cap V}"] \rar & M(f^{-1}(U)) \oplus M(f^{-1}(V))\dar["(\oplus_i \phi_{\alpha_i,U}) \oplus (\oplus_i \phi_{\alpha_i,V})"] \rar & M(f^{-1}(U \cup V) \rar["{[1]}"] \dar["\oplus_i \phi_{\alpha_i,U \cup V}"] & \! \\
            \bigoplus_i M(U \cap V)(q_i)[p_i] \rar & (\bigoplus_i M(U)(q_i)[p_i]) \oplus (\bigoplus_i M(V)(q_i)[p_i]) \rar & \bigoplus_i M(U \cup V)(q_i)[p_i] \rar["{[1]}"] & \!
        \end{tikzcd}
    \]
    where the rows are distinguished triangles in $\cat{DM}(k,R)$. If the left two vertical maps are isomorphisms, so is the right-hand vertical map. The result follows from induction on the number of open sets in the cover. 
\end{proof}
\begin{rem}\label{rem:AffineTrivs}
    We will apply this theorem in the case that the algebraic fibre bundle trivializes over a cover of $B$ by affine spaces $U_j = \A^m$. In this case, we have identifications $M(f^{-1}(U_j)) = M(\A^m) \otimes M(F) = M(F)$, and we only need to check that the maps 
    \begin{equation}\label{eq:phiUj}
        \oplus_i \phi_{\alpha_i,U_j}\co  M(F) \to \bigoplus_i M(\A^m)(q_i)[p_i] = \bigoplus_i R(q_i)[p_i]
    \end{equation}
    are isomorphisms for each $j$. Indeed, if $V \subseteq U_j=\A^m$, then the map
    \[
        \oplus_i \phi_{\alpha_i,V}\co M(V) \otimes M(F) \to M(V) \otimes \bigoplus_i R(q_i)[p_i]
    \]
    coincides with $\id \otimes (\oplus_i \phi_{\alpha_i,U_j})$, which is an isomorphism provided \eqref{eq:phiUj} is. Moreover, \eqref{eq:phiUj} is an isomorphism if the classes $\alpha_j|_{f^{-1}(U_j)}$ form a basis for the free $\m_R$-module $\h^{*,*}(f^{-1}(U_j),R) = \h^{*,*}(F,R)$.
\end{rem}

The next task is to show that the flag varieties are pure Tate. The standard argument is to observe that there is a cellular decomposition of flag varieties given by Schubert varieties (see e.g. \cite{K:MotivicCohoOfCellularVarieties}), but this requires resolution of singularities for the base field $k$, as the Schubert varieties are in general not smooth. We sketch an argument that avoids the resolution-of-singularities assumption.

Let $0 < m < n$, and recall that there are two closed immersions $i_1\co \Gr(m,n-1) \to \Gr(m,n)$ and $i_2\co \Gr(m-1,n-1) \to \Gr(m,n)$ defined on $k$-rational points as follows. The morphism $i_1$ is induced by the inclusion $k^{n-1} \to k^n$ given by $\bar{v} \mapsto (\bar{v},0)$, and the morphism $i_2$ sends an $(m-1)$-plane $V \subseteq k^{n-1}$ to $V \oplus \langle e_n \rangle$, where $e_n \in k^n$ is the $n$-th standard basis vector. The closed immersion $i_2$ has codimension $n-m$, and the open complement is an affine space bundle over $\Gr(m,n-1)$. The localization triangle associated with $i_2$ is thus
\[
    M(\Gr(m,n-1)) \xrar{{i_1}_*} M(\Gr(m,n)) \to M(\Gr(m-1,n-1))(n-m)[2(n-m)] \xrar{[1]} .
\]
An inductive argument using \Cref{lem:CohoImpliesSplit} and the fact that the map in motivic cohomology induced by $i_1$ is surjective (a Chow ring calculation) establishes the following lemma. Given an $m$-tuple of integers $\bm{\lambda} = (\lambda_1, \dots, \lambda_m)$, let $N(\bm{\lambda}):= \sum_i \lambda_i$.

\begin{lem}\label{lem:GrPureTate}
    The motive $M(\Gr(m,n))$ admits a Tate sum decomposition
    \[
        M(\Gr(m,n)) = \bigoplus_{\bm{\lambda} \in \Lambda} R(N(\bm{\lambda}))[2N(\bm{\lambda})]
    \]
    where $\Lambda$ is the set of $m$-tuples $\bm{\lambda} = (\lambda_1, \dots, \lambda_m)$ with $\lambda_1 \leq \cdots \leq \lambda_m \leq n-m$.
\end{lem}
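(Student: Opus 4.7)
The plan is to induct on $n$ (with $m$ varying), using the localization triangle displayed just before the lemma statement together with \Cref{lem:CohoImpliesSplit} to split it. The base cases $m = 0$ and $m = n$ give a point, whose motive is $R(0)[0]$, corresponding to the unique tuple $\bm{\lambda}$ (empty, or all zeros) with $N(\bm{\lambda}) = 0$; this matches the indexing set $\Lambda$ since for $m = n$ the constraint $\lambda_m \leq 0$ forces $\bm{\lambda} = (0,\dots,0)$.

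For the inductive step, I would assume the decomposition holds for $\Gr(m,n-1)$ and $\Gr(m-1,n-1)$, so that both end terms of the localization triangle are already expressed as pure Tate sums. Since by assumption the map ${i_1}^*$ on motivic cohomology is surjective, \Cref{lem:CohoImpliesSplit} applies (this is the step where that lemma does the work), so the localization triangle splits and gives
\[
    M(\Gr(m,n)) \cong M(\Gr(m,n-1)) \oplus M(\Gr(m-1,n-1))(n-m)[2(n-m)].
\]
In particular $M(\Gr(m,n))$ is itself pure Tate.

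It then remains to identify the index set. I would split $\Lambda = \{\bm{\lambda} \in \Z_{\geq 0}^m : \lambda_1 \leq \cdots \leq \lambda_m \leq n-m\}$ according to whether $\lambda_m \leq n-m-1$ or $\lambda_m = n-m$. The first piece is in bijection with the indexing set for $\Gr(m,n-1)$ (tuples of length $m$ with entries bounded by $(n-1)-m = n-m-1$), preserving $N(\bm{\lambda})$. The second piece is in bijection with the indexing set for $\Gr(m-1,n-1)$ via $\bm{\lambda} \mapsto (\lambda_1,\dots,\lambda_{m-1})$; the latter tuples satisfy $\lambda_{m-1} \leq n-m = (n-1)-(m-1)$, and the bijection shifts $N$ by exactly $n-m$, matching the Tate twist and shift $(n-m)[2(n-m)]$ on the second summand.

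The main obstacle is the splitting, i.e.\ verifying the hypotheses of \Cref{lem:CohoImpliesSplit}: both end terms are inductively pure Tate, but to conclude the triangle splits one needs to know the connecting map is null, and this is where the surjectivity of $i_1^*$ on motivic cohomology (a Chow-ring calculation) gets used. Once that is in hand, the combinatorial reindexing of $\Lambda$ is purely bookkeeping.
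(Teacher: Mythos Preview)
Your proposal is correct and follows essentially the same approach as the paper: induct using the displayed localization triangle, split it via \Cref{lem:CohoImpliesSplit} and the surjectivity of $i_1^*$ (a Chow-ring fact), and then match the combinatorics of $\Lambda$ by the case split on $\lambda_m$. The paper only sketches the argument and omits the explicit reindexing you spell out, but the strategy is identical.
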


\begin{lem}\label{lem:FlPureTate}
    The motive $M(\Fl'(n_1, \dots, n_r))$ is pure Tate. In particular, there is an isomorphism
    \[
        M(\Fl'(n_1, \dots, n_r)) \cong M(\Gr(n_1,n_2)) \otimes M(\Gr(n_2,n_3)) \otimes \cdots \otimes M(\Gr(n_{m-1},n_r)).
    \]
    in $\cat{DM}(k,R)$. 
\end{lem}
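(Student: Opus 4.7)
The plan is to reduce to the non-primed flag variety $\Fl(n_1, \dots, n_r)$ and then proceed by induction on $r$. By \Cref{cor:FlAffBund}, the projection $\pi\co \Fl'(n_1, \dots, n_r) \to \Fl(n_1, \dots, n_r)$ is an $\A^1$-weak equivalence, hence induces an isomorphism on motives. So it suffices to prove the analogous Tate decomposition for $M(\Fl(n_1, \dots, n_r))$. The base case $r = 2$ is the identification $\Fl(n_1, n_2) = \Gr(n_1, n_2)$, handled by \Cref{lem:GrPureTate}.

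For the inductive step I would study the forgetful map
\[
    p\co \Fl(n_1, n_2, \dots, n_r) \to \Fl(n_2, \dots, n_r)
\]
that drops the surjection $q_1\co Q_2 \to Q_1$. Reading off the functor of points, $p$ realizes $\Fl(n_1, \dots, n_r)$ as the relative Grassmannian $\Gr(n_1, Q_2^{\mathrm{univ}})$ of the universal rank-$n_2$ quotient bundle $Q_2^{\mathrm{univ}}$ on $\Fl(n_2, \dots, n_r)$. Since $Q_2^{\mathrm{univ}}$ is Zariski-locally trivial as a vector bundle, $p$ is a Zariski-locally trivial fibre bundle with fibre $\Gr(n_1, n_2)$, which is pure Tate by \Cref{lem:GrPureTate} and connected.

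I would then apply the Motivic Leray--Hirsch theorem (\Cref{prop:MotivicLH}) with input classes given by appropriate monomials in the Chern classes of the universal rank-$n_1$ quotient bundle $Q_1^{\mathrm{univ}}$ on $\Fl(n_1, \dots, n_r)$, chosen so that their restriction to a fibre is the Schubert/Chern-class basis of $\h^{*,*}(\Gr(n_1, n_2), R)$ dual to the Tate decomposition in \Cref{lem:GrPureTate}. Concretely, over any Zariski open $V \subseteq \Fl(n_2, \dots, n_r)$ trivializing $Q_2^{\mathrm{univ}}$, the restriction $Q_1^{\mathrm{univ}}|_{p^{-1}(V)}$ pulls back the tautological quotient bundle on $\Gr(n_1, n_2)$, so the map $\oplus_i \phi_{\alpha_i, V}$ is identified with $\id_{M(V)}$ tensored with the corresponding isomorphism for the fibre. \Cref{prop:MotivicLH} then yields
\[
    M(\Fl(n_1, \dots, n_r)) \cong M(\Fl(n_2, \dots, n_r)) \otimes M(\Gr(n_1, n_2)),
\]
and combining with the inductive hypothesis applied to $\Fl'(n_2, \dots, n_r) \simeq \Fl(n_2, \dots, n_r)$ gives the claimed tensor factorization.

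The main technical obstacle is verifying the fibrewise basis property of the Chern classes of $Q_1^{\mathrm{univ}}$, i.e., identifying explicit Chern monomials that realize the Tate decomposition of $M(\Gr(n_1, n_2))$ produced by \Cref{lem:GrPureTate}. This is a motivic Grassmann bundle theorem and can be established either by a parallel induction on $n$ using the iterated projective bundle theorem and the splitting principle, or by directly matching Schubert classes with the localization triangles used in the proof of \Cref{lem:GrPureTate} and invoking \Cref{lem:CohoImpliesSplit}.
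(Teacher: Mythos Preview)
Your proposal is correct and follows essentially the same approach as the paper: reduce to $\Fl$ via \Cref{cor:FlAffBund}, induct on $r$, realize the forgetful map as a Grassmannian bundle, and apply the motivic Leray--Hirsch theorem with Chern classes of the tautological bundles as input. The paper resolves your ``main technical obstacle'' more directly than you suggest: since the Tate summands of $M(\Gr(n_1,n_2))$ are concentrated in Chow height $0$ (by \Cref{lem:GrPureTate}), it suffices to produce classes that restrict to an $R$-basis of $\CH^*(\Gr(n_1,n_2),R)$, and this is the classical fact that Chern monomials in the tautological sub- and quotient bundles span the Chow ring---no parallel induction or Schubert-cell matching is needed.
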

\begin{proof}
    It suffices to prove the result for $\Fl(n_1, \dots, n_r)$ by \Cref{cor:FlAffBund}. We proceed by induction on $r$, the base case being $r=2$ which is \Cref{lem:GrPureTate}. For the inductive step, let $f$ denote the structure map of the Grassmannian $n_1$-plane bundle of the Tautological rank-$n_2$ vector bundle on $\Fl(n_2, \dots, n_r)$. The total space is $\Fl(n_1, \dots, n_r)$, so we have a Zariski-locally trivial fibre bundle
    \[
        \Gr(n_1,n_2) \to \Fl(n_1, \dots, n_r) \xrar{f} \Fl(n_2, \dots, n_r)
    \]
    that trivializes over a cover of $\Fl(n_2, \dots, n_r)$ by affine spaces. The tautological rank-$n_1$ vector bundle $T$ on the total space $\Fl(n_1, \dots, n_r)$ and the universal quotient bundle $Q$ on $\Fl(n_1, \dots, n_r)$ provide Chern classes (with $R$-coefficients)
    \[
        c_i(T),c_i(Q) \in \h^{2*,*}(\Fl(n_1, \dots, n_r),R) = \CH^*(\Fl(n_1, \dots, n_r),R).
    \]
    Let $U = \A^m$ be any affine neighborhood that trivializes $f$. Under the map in Chow groups 
    \[
        \CH^*(\Fl(n_1, \dots, n_r),R) \to \CH^*(\Gr(n_1,n_2) \times \A^m,R) = \CH^*(\Gr(n_1,n_2),R)
    \]
    induced by the inclusion $f^{-1}(U) \to \Fl(n_1, \dots, n_r)$, the Chern classes $c_i(T),c_i(Q)$ restrict to Chern classes of the tautological and universal quotient bundles (respectively) on $\Gr(n_1,n_2)$, certain products of which form a basis for the free $R$-module $\CH^*(\Gr(n_1,n_2),R)$. As the Tate summands of $M(\Gr(n_1,n_2))$ are concentrated in Chow height 0, these products of restrictions of $c_i(T),c_i(Q)$ also form a basis for the free $\m_R$-module $\h^{*,*}(\Gr(n_1,n_2),R)$. The result follows from \Cref{prop:MotivicLH} and \Cref{rem:AffineTrivs}. 
\end{proof}
Let $F$ denote the composition
\[
    \begin{tikzcd}[row sep = huge]
        A(n_1, \dots, n_r) \rar["{f^1(n_1, \dots, n_r)}"] &[3em] A(n_2, \dots, n_r) \rar["{f^1(n_2, \dots, n_r)}"] &[3em] \cdots \rar["{f^1(n_{r-1},n_r)}"] &[3em] A(n_r) = \GL_{n_r}.
    \end{tikzcd}
\]

\begin{lem}\label{lem:FibInc}
    For the fibre inclusion inclusion $j_x \co  \GL_{n_1} \to A(n_1, \dots, n_r)$ over any $k$-rational point $x\co  \Spec k \to \Fl'(n_1, \dots, n_r)$, the composition $F \circ j_x$ is na\"{i}vely $\A^1$-homotopic to the inclusion
    \begin{align*}
        \GL_{n_1} & \to \GL_n \\
        A & \mapsto 
        \begin{bmatrix}
            A & 0\\
            0 & I_{n_r-n_1}
        \end{bmatrix}.
    \end{align*}
\end{lem}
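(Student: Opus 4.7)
The plan is to write $F \circ j_x$ explicitly, realize it as the conjugate $P\iota P^{-1}$ of the standard block inclusion $\iota$ for some matrix $P \in \GL_{n_r}(k)$, and then deform $P$ to the identity along a polynomial $\A^1$-path after arranging $\det P = 1$. I may assume $n_1 \geq 1$ and $r \geq 2$, so $n_r > n_1$; the remaining cases are trivial.

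After fixing an identification $Q_1|_x \cong k^{n_1}$, the data of $x$ consists of surjections $q_i \co Q_{i+1} \to Q_i$ with splittings $s_i\co Q_i \to Q_{i+1}$ satisfying $q_i s_i = \id$. Iterating the formula $\tilde g = \id_{Q_{i+1}} + s_i(g - \id_{Q_i})q_i$ from Construction~\ref{const:f,p} yields, by induction on $r$,
\[
    F \circ j_x(g) = I_{n_r} + A(g - I_{n_1})B,
\]
where $A := s_{r-1} \circ \cdots \circ s_1 \co k^{n_1} \to k^{n_r}$ and $B := q_1 \circ \cdots \circ q_{r-1} \co k^{n_r} \to k^{n_1}$, and telescoping the relations $q_i s_i = \id$ gives $BA = I_{n_1}$. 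At the standard flag $x_0$ (with $Q_i = k^{n_i}$ and $q_i$, $s_i$ the usual truncation and inclusion of the first coordinates), we have $(A,B) = (A_0, B_0) := \bigl(\bigl[\begin{smallmatrix}I_{n_1}\\0\end{smallmatrix}\bigr], [I_{n_1}\ 0]\bigr)$, and the formula recovers the standard block inclusion $\iota$.

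The identity $BA = I_{n_1}$ forces $k^{n_r} = \operatorname{im}(A) \oplus \ker(B)$, so choosing an $n_r \times (n_r - n_1)$ matrix $A'$ whose columns form a basis of $\ker(B)$ gives $P := [A \mid A'] \in \GL_{n_r}(k)$. Comparing the block form of $PP^{-1} = I$ identifies $P^{-1} = \bigl[\begin{smallmatrix}B\\B'\end{smallmatrix}\bigr]$ for a unique $B'$ satisfying $A'B' = I - AB$, and a direct computation yields
\[
    P \iota(g) P^{-1} = AgB + A'B' = I + A(g - I)B = F \circ j_x(g).
\]
Since $n_r - n_1 \geq 1$, rescaling the last column of $A'$ by $(\det P)^{-1}$ arranges $P \in \SL_{n_r}(k)$.

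Finally, since $k$ is a field, $\SL_{n_r}(k)$ is generated by the elementary matrices $E_{ij}(a) := I + a e_{ij}$ for $i \neq j$, so I may factor $P = E_{i_1 j_1}(a_1) \cdots E_{i_m j_m}(a_m)$. The polynomial path $P(t) := E_{i_1 j_1}(t a_1) \cdots E_{i_m j_m}(t a_m) \in \SL_{n_r}(k[t])$ interpolates between $P(0) = I_{n_r}$ and $P(1) = P$, and its inverse is polynomial in $t$ by Cramer's rule, so $H \co \A^1 \times \GL_{n_1} \to \GL_{n_r}$, $(t, g) \mapsto P(t) \iota(g) P(t)^{-1}$, is a morphism of $k$-schemes witnessing a naive $\A^1$-homotopy between $\iota$ and $F \circ j_x$. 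The principal obstacle is the middle step: expressing $F \circ j_x$ as an honest conjugate of $\iota$ by an element of $\SL_{n_r}$, since only $\SL$ (not $\GL$) admits the elementary-matrix deformation used at the end; the hypothesis $n_r > n_1$ is exactly what provides the extra column needed to normalize $\det P$ to $1$.
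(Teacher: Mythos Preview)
Your proof is correct and follows essentially the same approach as the paper's: identify $F \circ j_x$ as conjugation of the standard block inclusion by some $h \in \GL_{n_r}(k)$, reduce to $h \in \SL_{n_r}(k)$, and then use that elements of $\SL_{n_r}(k)$ are products of elementary matrices, each of which is na\"{i}vely $\A^1$-homotopic to the identity. Your version simply fills in the details the paper leaves implicit---in particular the explicit iteration $\tilde g = \id + s_i(g-\id)q_i$ giving $F\circ j_x(g) = I + A(g-I)B$, and the observation that $n_r > n_1$ is what lets one rescale a column of $A'$ to normalise $\det P$ without disturbing the conjugation identity.
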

\begin{proof}
    The map $F \circ j_x$ is given on points by 
    \[
        A \mapsto 
        h
        \begin{bmatrix}
            A & \\
            0 & I_{n_r-n_1}
        \end{bmatrix}
        h^{-1}
    \] 
    for some $k$-rational point $h \in \GL_{n_r}(k)$. We may assume $h \in \SL_{n_r}(k)$ in which case $h$ is a product of elementary matrices, and conjugation by an elementary matrix is na\"{i}vely $\A^1$-homotopic to the identity map on $\GL_{n_r}$.
\end{proof}

We record here the motivic cohomology ring of the Stiefel varieties $V(m,n)$ (and in particular $\GL_n$). The calculation can be found in \cite{Pus:HigherChernClasses} for the general linear groups and in \cite{Wil:MotivicCoho} for all Stiefel varieties. Let $\{u\}$ denote the image of  $u \in k^\times = \m_\Z^{1,1}$ under the map $\m_\Z \to \m_R$ induced by the map of coefficient rings.

\begin{lem}[O. Pushin, B. Williams]\label{lem:VCoho}
    The ring $\h^{*,*}(V(m,n),R)$ admits a presentation as the polynomial algebra over $\m_R$ generated by elements $\alpha_i$ in bidegree $(2i-1,i)$, where $i \in \{n-m+1, \dots, n\}$, subject to the relations 
    \[
	\alpha_i^2 = 
        \begin{cases}
		  0 & \text{if } 2i-1 > n, \\
		  \{-1\}\alpha_{2i-1} & \text{otherwise}
		\end{cases}
    \]
    as well as the relations imposed by the motivic cohomology ring being graded commutative in the first grading and commutative in the second. 
\end{lem}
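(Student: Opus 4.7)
The plan is to argue by induction on $m$, using the fact that the projection $V(m,n) \to V(1,n) = \A^n \setminus \{0\}$ sending a frame to its first vector is a Zariski-locally trivial bundle whose fibre is $\A^1$-equivalent to $V(m-1, n-1)$. By reasoning analogous to \Cref{lem:in-1Hocofib}, this yields an $\A^1$-cofibre sequence
\[
    V(m-1,n-1)_+ \to V(m,n)_+ \to \Sigma^{2n-1,n} V(m-1,n-1)_+.
\]
The base case $m=1$ is the motivic sphere $V(1,n) = S^{2n-1,n}$, whose motivic cohomology is the stated algebra on the single generator $\alpha_n$: the square $\alpha_n^2$ vanishes by dimensional reasons when $n > 1$, while $\alpha_1^2 = \{-1\}\alpha_1$ for $n=1$ is the classical Milnor K-theoretic computation of $\h^{*,*}(\G_m,R)$.

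For the inductive step, the long exact sequence in motivic cohomology arising from the above cofibre sequence splits into short exact sequences by a bidegree count combined with the inductive hypothesis on $\h^{*,*}(V(m-1,n-1),R)$. This gives the additive presentation with one new generator $\alpha_n$ pulled back from $V(1,n)$, while $\alpha_{n-m+1}, \ldots, \alpha_{n-1}$ come from the fibre $V(m-1,n-1)$. The multiplicative relations among the old generators are preserved by naturality, and products $\alpha_i \alpha_n$ for $i < n$ are determined by bidegree considerations.

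The main anticipated obstacle is computing the squares $\alpha_i^2$ with the precise coefficient $\{-1\}$. For $2i-1 > n$ the square vanishes for dimensional reasons, but for $2i-1 \leq n$ the class $\alpha_i^2$ lies in the same bidegree as $\alpha_{2i-1}$, so one must identify the scalar in $\m_R$. The key input is the Milnor K-theory relation $\{u\}^2 = \{-1\}\{u\}$ in $\h^{*,*}(\G_m,R)$, which propagates through the induction. Pinning down this scalar precisely is the substance of \cite{Pus:HigherChernClasses} (for $V(n,n) = \GL_n$) and \cite{Wil:MotivicCoho} (for general Stiefel varieties); a self-contained proof here would essentially recapitulate their computations.
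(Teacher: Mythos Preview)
The paper does not prove this lemma at all: it is stated as a known result attributed to Pushin (for $\GL_n$) and Williams (for general Stiefel varieties), with the preceding sentence ``The calculation can be found in \cite{Pus:HigherChernClasses} for the general linear groups and in \cite{Wil:MotivicCoho} for all Stiefel varieties.'' Your sketch of the inductive argument via the cofibre sequence is a plausible outline of how such a computation can be organized, and you correctly recognize that the determination of the coefficient $\{-1\}$ in the squaring relation is the substantive part handled in those references; since you ultimately defer to the same citations, your proposal is entirely in line with the paper's treatment.
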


In particular, the product classes $\rho_{i_1} \cdots \rho_{i_l}$, for $n-m < i_1 < \cdots < i_l \leq n$ form a basis for the free $\m_R$-module $\h^{*,*}(V(m,n),R)$. 

We now come to the main result of this section.

\begin{prop}\label{prop:APureTate}
    The motive $M(A(n_1, \dots, n_r))$ is pure Tate. In particular, there is a decomposition
    \[
        M(A(n_1, \dots, n_l)) = M(\GL_{n_1}) \otimes M(\Fl(n_1, \dots, n_r))
    \]
    such that, under the identification of \Cref{prop:GLnPureTate}, the map $p\co A(n_1, \dots, n_r) \to \Fl'(n_1, \dots, n_r)$ induces the projection onto the factor $R(0)[0] \otimes M(\Fl(n_1, \dots, n_r))$.
\end{prop}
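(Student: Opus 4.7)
The plan is to apply the Motivic Leray--Hirsch theorem (Prop.~\ref{prop:MotivicLH}) to the automorphism bundle $p\co A(n_1,\dots,n_r) \to \Fl'(n_1,\dots,n_r)$. By Remark~\ref{rem:AutBundle}, this map is a Zariski-locally trivial fibre bundle with fibre $\GL_{n_1}$ that trivialises over a cover of $\Fl'(n_1,\dots,n_r)$ by affine spaces. The fibre $\GL_{n_1}$ is connected and pure Tate by Prop.~\ref{prop:GLnPureTate}, and Lemma~\ref{lem:VCoho} (with $m=n=n_1$) gives a free $\m_R$-basis of $\h^{*,*}(\GL_{n_1},R)$ consisting of the products $\rho_I := \rho_{i_1}\cdots\rho_{i_l}$ indexed by increasing sequences $I = (1 \le i_1 < \cdots < i_l \le n_1)$, with $\rho_\emptyset = 1$.

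To verify the hypothesis of Prop.~\ref{prop:MotivicLH}, I would lift this basis to $\h^{*,*}(A(n_1,\dots,n_r),R)$ by pulling back along the composition $F\co A(n_1,\dots,n_r) \to \GL_{n_r}$ defined above Lemma~\ref{lem:FibInc}, setting $\tilde\rho_i := F^*\rho_i$ and $\tilde\rho_I := \tilde\rho_{i_1}\cdots\tilde\rho_{i_l}$. By Remark~\ref{rem:AffineTrivs}, it suffices to check that for any trivialising affine open $U \cong \A^m$ in $\Fl'(n_1,\dots,n_r)$ and any $k$-rational point $x \in U(k)$ (the origin, say), the fibre inclusion $j_x \co \GL_{n_1} \to A(n_1,\dots,n_r)$ sends $\{\tilde\rho_I\}_I$ to the $\rho_I$ basis of $\h^{*,*}(\GL_{n_1},R)$. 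Since $j_x^*\tilde\rho_I = (F \circ j_x)^*\rho_I$, and Lemma~\ref{lem:FibInc} identifies $F \circ j_x$ up to $\A^1$-homotopy with the standard inclusion $\GL_{n_1} \hookrightarrow \GL_{n_r}$, this reduces to the claim that the standard inclusion pulls $\rho_I$ back to $\rho_I$ for $I \subseteq \{1,\dots,n_1\}$. This is a naturality statement for the generators of Lemma~\ref{lem:VCoho} that is compatible with the summand identification of Prop.~\ref{prop:GLnPureTate}; note that for $i > n_1$, the bidegree $(2i-1,i)$ piece of $\h^{*,*}(\GL_{n_1},R)$ already vanishes for degree reasons, so no contribution arises from the higher generators.

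With the Leray--Hirsch hypothesis verified, Prop.~\ref{prop:MotivicLH} yields
\[
    M(A(n_1,\dots,n_r)) \cong \bigoplus_{I} M(\Fl'(n_1,\dots,n_r))(d(I)) \cong M(\Fl'(n_1,\dots,n_r)) \otimes M(\GL_{n_1}),
\]
where the last identification uses Prop.~\ref{prop:GLnPureTate}. Combining with Cor.~\ref{cor:FlAffBund} to replace $M(\Fl'(n_1,\dots,n_r))$ by $M(\Fl(n_1,\dots,n_r))$ gives the claimed decomposition. The final clause follows from the corresponding statement in Prop.~\ref{prop:MotivicLH}: the map $p$ induces projection onto the summand indexed by $I = \emptyset$, which under our identifications is precisely $R(0)[0] \otimes M(\Fl(n_1,\dots,n_r))$.

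The main obstacle is the fibre-restriction calculation above: one must combine Lemma~\ref{lem:FibInc} (an $\A^1$-homotopy computation, already done) with the behaviour of the Pushin--Williams generators $\rho_i$ under the standard inclusion $\GL_{n_1} \hookrightarrow \GL_{n_r}$. The latter is essentially a naturality statement, but care is needed to match the basis in Lemma~\ref{lem:VCoho} with the dual of the Tate summand decomposition of Prop.~\ref{prop:GLnPureTate}.
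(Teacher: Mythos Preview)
Your approach is essentially identical to the paper's: both apply the Motivic Leray--Hirsch theorem to the bundle $p$ using the pulled-back classes $F^*(\rho_{i_1}\cdots\rho_{i_l})$, and both reduce the fibre-restriction check to Lemma~\ref{lem:FibInc} together with the behaviour of the $\rho_i$ under the standard inclusion $\GL_{n_1}\hookrightarrow\GL_{n_r}$. The paper dispatches that last naturality statement by citing \cite{Wil:MotivicCoho}*{Prop.~8}, which is exactly the ingredient you flag as the ``main obstacle''; one small caution is that your aside about $\h^{2i-1,i}(\GL_{n_1},R)$ vanishing ``for degree reasons'' is not literally true (there can be $\m_R$-contributions), but this is irrelevant to the argument since you only need the restriction on the chosen basis elements $\rho_I$ with $I\subseteq\{1,\dots,n_1\}$.
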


\begin{proof}
    We apply \Cref{prop:MotivicLH} to the bundle of groups
    \[
        p\co A(n_1, \dots, n_r) \to \Fl'(n_1, \dots, n_r)
    \]
    with typical fibre $\GL_{n_1}$ that trivializes over a cover by affine spaces (see \Cref{rem:AutBundle}). Note that any $k$-rational point $x\co  \Spec k \to U \cong \A^m$ in a trivializing neighborhood $U \subseteq \Fl'(n_1, \dots, n_r)$ furnishes an $\A^1$-weak equivalence
    \[
        \GL_{n_1} \to p^{-1}(U) \cong \GL_{n_1} \times \A^m.
    \]
    In light of \cite{Wil:MotivicCoho}*{Prop. 8} and \Cref{lem:FibInc}, the set of cohomology classes $\{F^*(\rho_{i_1} \cdots \rho_{i_m})\}_{i_m \leq n_1}$ in $\h^{*,*}(A(n_1, \dots, n_r),R)$ restricts to a basis for the free $\m_R$-module $\h^{*,*}(\GL_{n_1},R)$ under the map in cohomology induced by the inclusion $p^{-1}(U) \to A(n_1, \dots, n_r).$
\end{proof}

\section{Rothenberg--Steenrod spectral sequence calculations}\label{sec:SSCalcs}

In this section, we study the Rothenberg--Steenrod Spectral sequence central to the proof of splitting of the motive of $\GL_n$. To this end, we exhibit the $k$-scheme $A(n_1, \dots, n_r)$ as the base of a certain principal $\GL_{n_{r-1}}$-bundle. We assume in this section that the pair $(k,R)$ satisfies Beilinson-Soul\'{e} vanishing.

We now describe the group action of interest. There is an action
\[
    a\co \GL_{n_{r-1}} \times A(n_1, \dots, n_{r-1}) \to A(n_1, \dots, n_{r-1})
\]
defined as follows. If $r=2$, we give $A(n) = \GL_n$ the adjoint action. For $r>2$, let $h \in \GL_{n_{r-1}}(A)$ and define the action on points by
\[
    h.[((q_i,s_i)_{i=1}^{r-2},g)] := [((q_i',s_i')_{i=1}^{r-2},g)]
\]
where 
\[
    (q_i',s_i') := 
    \begin{cases}
        (q_i,s_i) & \text{if } i < r-2, \\
        (q_{r-2}h^{-1},hs_{r-2}) & \text{if } i=r-2.
    \end{cases}
\]
A diagram chase shows that the map
\[
    f^1(n_1, \dots, n_{r-1})\co  A(n_1, \dots, n_{r-1}) \to A(n_2, \dots, n_{r-1})
\]
of Construction \ref{const:f,p} is $\GL_{n_{r-1}}$-equivariant. 

The group scheme $\GL_{n_{r-1}}$ acts freely on $V'(n_{r-1},n_r)$ on the left by $h.(q,s) = (hq,sh^{-1})$. If we endow $V'(n_{r-1},n_r) \times A(n_1, \dots, n_{r-1})$ with the diagonal $\GL_{n_{r-1}}$-action, then there is a $\GL_{n_{r-1}}$-equivariant morphism 
\[
    \pi\co  V'(n_{r-1},n_r) \times A(n_1, \dots, n_{r-1}) \to A(n_1, \dots, n_r),
\]
where $\GL_{n_{r-1}}$ acts trivially on the target, defined as follows. Suppose we have $A$-points $(q,s)$ and $[(q_i,s_i)_i,g]$ of $V'(n_{r-1},n_r)$ and $A(n_1, \dots, n_{r-1})$, respectively. Then the isomorphism class of the diagram
\[
    \begin{tikzcd}
        A^{n_r} \rar["q"'] & A^{n_{r-1}} \rar["q_{r-2}"'] \lar[bend right, "s"'] & Q_{r-2} \rar["q_{r-3}"'] \lar[bend right,"s_{r-2}"'] & \cdots \lar[bend right,"s_{r-3}"', pos = .6] \rar["q_2"'] & Q_2 \lar[bend right,"s_2"'] \rar["q_1"'] & Q_1 \lar[bend right, "s_1"'] \ar[loop right, looseness = 5,"g"]
    \end{tikzcd}
\]
is an $A$-point of $A(n_1, \dots, n_r)$. This assignment defines the morphism $\pi$. To see that $\pi$ is equivariant, suppose $((q,s),[(q_i,s_i)_i,g])$ is an $A$-point of $V'(n_{r-1},n_r) \times A(n_1, \dots, n_{r-1})$ and $h \in \GL_{n_{r-1}}(A)$. When $r > 2$, the isomorphism of diagrams 
\[
\begin{tikzcd}
     A^{n_r} \rar["hq"'] \dar[equals] & A^{n_{r-1}} \dar["h^{-1}"] \rar["q_{r-2}h^{-1}"'] \lar[bend right, "sh^{-1}"'] & Q_{r-2} \dar[equals] \rar["q_{r-3}"'] \lar[bend right,"hs_{r-2}"'] & \cdots \lar[bend right,"s_{r-3}"', pos = .6] \rar["q_2"'] & Q_2 \dar[equals] \lar[bend right,"s_2"'] \rar["q_1"'] & Q_1 \dar[equals] \lar[bend right, "s_1"'] \ar[loop right, looseness = 5,"g"]\\
     A^{n_r} \rar["q"] & A^{n_{r-1}} \rar["q_{r-2}"] \lar[bend left, "s"] & Q_{r-2} \rar["q_{r-3}"] \lar[bend left,"s_{r-2}", pos = .4] & \cdots \lar[bend left,"s_{r-3}", pos = .6] \rar["q_2"] & Q_2 \lar[bend left,"s_2"] \rar["q_1"] & Q_1 \lar[bend left, "s_1"] \ar[loop right, looseness = 5,"g"]
\end{tikzcd}
\]
shows that $\pi(A)((q,s),[(q_i,s_i)_i,g]) = \pi(A)(h.((q,s),[(q_i,s_i)_i,g]))$. And when $r=2$, the isomorphism of diagrams
\[
\begin{tikzcd}
    A^{n_2} \dar[equals] \rar["hq"'] & A^{n_1} \dar["h^{-1}"] \lar[bend right,"sh^{-1}"'] \ar[loop right, looseness = 5, "hgh^{-1}"] \\
    A^{n_2} \rar["q"] & A^{n_1} \lar[bend left, "s"] \ar[loop right, looseness = 5, "g"]
\end{tikzcd}
\]
shows that $\pi$ is $\GL_{n_{r-1}}$-equivariant in this case as well.
\begin{lem}
    The morphism $\pi\co  V'(n_{r-1},n_r) \times A(n_1, \dots, n_{r-1}) \to A(n_1, \dots, n_r)$ is a Zariski-locally trivial principal $\GL_{n_{r-1}}$-bundle. 
\end{lem}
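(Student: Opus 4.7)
The plan is to exhibit $\pi$ as Zariski-locally pulled back from the principal $\GL_{n_{r-1}}$-bundle $V'(n_{r-1},n_r) \to \Gr'(n_{r-1},n_r)$. This latter bundle is Zariski-locally trivial: by \Cref{lem:VAffineBund}, \Cref{lem:GrAffineBund}, and the pullback square exhibiting $V'(n_{r-1},n_r) = V(n_{r-1},n_r) \times_{\Gr(n_{r-1},n_r)} \Gr'(n_{r-1},n_r)$, it is the pullback of the standard principal $\GL_{n_{r-1}}$-bundle $V(n_{r-1},n_r) \to \Gr(n_{r-1},n_r)$, which trivializes over the standard affine cover.

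First I would verify the algebraic conditions of \Cref{def:PrincBund}. The action $h.(q,s)=(hq,sh^{-1})$ of $\GL_{n_{r-1}}$ on $V'(n_{r-1},n_r)$ is free because $hq = q$ together with the surjectivity of $q$ forces $h=I_{n_{r-1}}$; hence the diagonal action on $V'(n_{r-1},n_r) \times A(n_1,\dots,n_{r-1})$ is also free. To check that $a \times \pr_2$ is a sheaf isomorphism, suppose $((q,s),[(q_i,s_i)_i,g])$ and $((q',s'),[(q_i',s_i')_i,g'])$ are $A$-points with the same image under $\pi$. An isomorphism between the two diagrams in the sense of \eqref{eq:FlFOPIso} satisfies $h_r=\id$, which forces $q'=h_{r-1}q$ and $s'=sh_{r-1}^{-1}$, and the remaining data $(q_i',s_i')_{i=1}^{r-2}$ and $g'$ are then the translates of $(q_i,s_i)_{i=1}^{r-2}$ and $g$ under $h_{r-1} \in \GL_{n_{r-1}}(A)$, with $h_{r-1}$ uniquely determined.

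For Zariski-local triviality, consider the forgetful morphism
\[
    \phi\co A(n_1,\dots,n_r) \to \Gr'(n_{r-1},n_r), \qquad [(q_i,s_i)_{i=1}^{r-1},g] \mapsto [(q_{r-1},s_{r-1})],
\]
and set $W_I := \phi^{-1}(U_I')$, where $\{U_I'\}$ is the standard Zariski cover of $\Gr'(n_{r-1},n_r)$ that trivializes $V'(n_{r-1},n_r) \to \Gr'(n_{r-1},n_r)$. A local section $\sigma_I\co U_I' \to V'(n_{r-1},n_r)$ of this bundle assigns, for each $A$-point of $U_I'$, a preferred lift $(q,s)$, which amounts to a canonical isomorphism $Q_{r-1} \cong A^{n_{r-1}}$. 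Transporting the lower flag data $((q_i,s_i)_{i=1}^{r-2},g)$ through this identification produces an $A$-point of $A(n_1,\dots,n_{r-1})$, and this assignment defines a section $W_I \to V'(n_{r-1},n_r) \times A(n_1,\dots,n_{r-1})$ of $\pi$. The existence of these local sections establishes Zariski-local triviality and, in particular, implies that $\pi$ is a Zariski (hence Nisnevich) epimorphism.

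The main obstacle is the diagram-chase bookkeeping needed to verify that the section defined in the previous paragraph is well-defined on isomorphism classes and natural in $A$. This is entirely analogous to the diagram chases used to establish the equivariance of $\pi$ in the text preceding the lemma, and no new ideas are required.
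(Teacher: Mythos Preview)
Your proof is correct and follows essentially the same approach as the paper: both verify the isomorphism $a \times \pr_2$ by a diagram chase and establish Zariski-local triviality by pulling back the standard cover $\{U_I'\}$ of $\Gr'(n_{r-1},n_r)$ along the forgetful map $A(n_1,\dots,n_r) \to \Gr'(n_{r-1},n_r)$. The only cosmetic difference is that the paper describes the local section more concretely, observing that $U_I'$ literally sits inside $V'(n_{r-1},n_r)$ as the locus where the $I$-submatrices are the identity, so that $f^{-1}(U_I') \cong U_I' \times A(n_1,\dots,n_{r-1})$ includes directly into the total space; your ``transport along $\sigma_I$'' amounts to the same thing.
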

\begin{proof}
    Analyzing points and some diagram chasing shows that the map $(\alpha, \pr_2)$ of Definition \ref{def:PrincBund} is an isomorphism of Nisnevich sheaves. To establish local triviality, recall that the open affines $U_I' \subseteq \Gr'(n_{r-1},n_r)$ trivialize the algebraic fibre bundle $f\co A(n_1, \dots, n_r) \to \Gr'(n_{r-1},n_r)$ (see the proof of \Cref{lem:ARepresentable}). So we have isomorphisms
    \[
        f^{-1}(U_I') \cong U_I' \times A(n_1, \dots, n_{r-1}).
    \]
    The set $U_I'(A)$ consists of pairs $(q,s) \subseteq \Mat_{n_{r-1} \times n_r}(A) \times \Mat_{n_r \times n_{r-1}}(A)$ such that submatrices of $q$ and $s$ obtained by selecting the columns and rows, respectively, that correspond to $I$ is the identity matrix. Such a pair $(q,s)$ lies in $V'(n_{r-1},n_r)(A)$. Hence the inclusions
    \[
        f^{-1}(U_I') = U_I' \times A(n_1, \dots n_{r-1}) \to V'(n_{r-1},n_r) \times A(n_1, \dots, n_{r-1})
    \]
    provide trivializing sections of $\pi$. 
\end{proof}

The map $\pi$ satisfies the assumptions of Proposition \ref{prop:RSSS}, so there is a spectral sequence 
\begin{align*}
    \E_2^{l,p,q} &= \Ext^l_{\widehat{\h^{*,*}(\GL_{n_{r-1}},R)}}(\widehat{\h^{p,q}(V'(n_{r-1},n_r) \times A(n_1, \dots, n_{r-1}),R)}, \m_R) \Rightarrow \h^{l+p,q}(A(n_1, \dots, n_r),R).
\end{align*}
To determine the $\E_2$-page, we need to determine the module structure of 
\[
   \widehat{\h^{*,*}(V'(n_{r-1},n_r) \times A(n_1, \dots, n_{r-1}),R)} 
\]
as a module over $\widehat{\h^{*,*}(\GL_{n_{r-1}},R)}$. Moreover, by \Cref{lem:VCoho} and \Cref{prop:APureTate}, the motivic cohomology rings $\h^{*,*}(A(n_1, \dots, n_{r-1}),R)$ and $\h^{*,*}(V'(n_{r-1},n_r))$ are free, finitely generated $\m_R$-modules. We therefore have a decomposition
\[
    \widehat{\h^{*,*}(V'(n_{r-1},n_r) \times A(n_1, \dots, n_{r-1}),R)} = \widehat{\h^{*,*}(V'(n_{r-1},n_r),R)} \otimes_{\m_R} \widehat{\h^{*,*}(A(n_1, \dots, n_{r-1}),R)}
\]
as $\m_R$-modules, so that we may calculate the module structure of the duals of the motivic cohomology rings of $V'(n_{r-1},n_r)$ and $A(n_1, \dots, n_{r-1})$ independently and assemble them. 

The following lemma is \cite{Wil:EquivariantMotivicCoho}*{Prop. 2.9}. Let $\rho_i$ and $\alpha_i$ denote the $\m_R$-algebra generators of $\h^{*,*}(\GL_n,R)$ and $\h^{*,*}(V(m,n),R)$, respectively. 

\begin{lem}\label{lem:MultMC}
    The multiplication $m\co \GL_n \times \GL_n \to \GL_n$ induces the comultiplication
    \[
        m^*\co  \h^{*,*}(\GL_n) \to \h^{*,*}(\GL_n) \otimes_{\m_R} \h^{*,*}(\GL_n)
    \]
    defined by $\rho_i \mapsto \rho_i \otimes 1 + 1 \otimes \rho_i$ for each $i=1, \dots, n$. 
\end{lem}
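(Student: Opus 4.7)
My plan is to identify $m^*(\rho_i)$ explicitly using the Künneth decomposition and two standard sections of $m$, then dispatch the remaining terms by a bidegree count.

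First, since $\h^{*,*}(\GL_n,R)$ is a free, finitely generated $\m_R$-module (\Cref{prop:GLnPureTate} together with the presentation of \Cref{lem:VCoho}), the Künneth map furnishes an isomorphism of $\m_R$-algebras
\[
    \h^{*,*}(\GL_n \times \GL_n,R) \cong \h^{*,*}(\GL_n,R) \otimes_{\m_R} \h^{*,*}(\GL_n,R),
\]
and the products $\rho_I \otimes \rho_J$, indexed by strictly increasing subsequences $I,J \subseteq \{1,\dots,n\}$, form an $\m_R$-basis. Write
\[
    m^*(\rho_i) = \sum_{I,J} c_{I,J}\, (\rho_I \otimes \rho_J) \qquad (c_{I,J} \in \m_R).
\]

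Next, consider the two morphisms $j_1,j_2 \co \GL_n \to \GL_n \times \GL_n$ sending $g$ to $(g,e)$ and $(e,g)$ respectively, where $e\co \Spec k \to \GL_n$ is the unit. Both satisfy $m \circ j_l = \id$, so $j_l^* \circ m^* = \id$ on $\h^{*,*}(\GL_n,R)$. Under the Künneth identification, $j_1^*(\alpha \otimes \beta) = \alpha \cdot e^*(\beta)$ and $j_2^*(\alpha \otimes \beta) = e^*(\alpha)\cdot \beta$; since the generators $\rho_i$ sit in strictly positive cohomological degree, $e^*(\rho_i)=0$, hence $e^*(\rho_J)=0$ whenever $J \neq \emptyset$ and $e^*(1)=1$. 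Applying $j_1^*$ and $j_2^*$ to the equation for $m^*(\rho_i)$ therefore forces $c_{\{i\},\emptyset} = c_{\emptyset,\{i\}} = 1$ and $c_{I,\emptyset} = c_{\emptyset,J} = 0$ for all other $I,J$. Thus
\[
    m^*(\rho_i) = \rho_i \otimes 1 + 1 \otimes \rho_i + \sum_{\substack{I,J \neq \emptyset}} c_{I,J}(\rho_I \otimes \rho_J).
\]

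Finally, I rule out the remaining terms by comparing bidegrees. If $I = (i_1 < \cdots < i_s)$ and $J=(j_1 < \cdots < j_t)$ with $s,t \geq 1$, then $\rho_I \otimes \rho_J$ has bidegree $\bigl(\sum(2i_k-1) + \sum(2j_l-1),\ \sum i_k + \sum j_l\bigr)$. For this to equal $(2i-1, i)$, the second coordinates give $\sum 2i_k + \sum 2j_l = 2i$, and substituting into the first yields $2i - (s+t) = 2i - 1$, i.e.\ $s+t=1$. This contradicts $s,t \geq 1$, so every coefficient $c_{I,J}$ with $I,J \neq \emptyset$ must vanish, leaving $m^*(\rho_i) = \rho_i \otimes 1 + 1 \otimes \rho_i$.

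The argument is clean because nothing is hard: the Künneth isomorphism and the computation of $\h^{*,*}(\GL_n,R)$ are in hand, and the only potential subtlety, controlling the cross terms, evaporates under the bidegree count. The one step to execute carefully is verifying that $e^*$ kills each $\rho_i$, which is immediate from the fact that $\m_R$ is concentrated in bidegrees $(p,q)$ with $p \leq q$ (so there is nothing in bidegree $(2i-1,i)$ for $i \geq 1$) and hence from the naturality of the generators in \Cref{lem:VCoho}.
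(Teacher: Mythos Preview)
The paper does not prove this lemma at all; it simply records it as \cite{Wil:EquivariantMotivicCoho}*{Prop.~2.9}. Your direct argument via the K\"unneth decomposition, the two unit sections, and a bidegree count is a perfectly natural way to supply a self-contained proof, and the overall strategy is sound.

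Two small points need repair, however. First, in the final step you compare the bidegree of the basis element $\rho_I\otimes\rho_J$ itself to $(2i-1,i)$, implicitly treating $c_{I,J}$ as a scalar in $\m_R^{0,0}$. But $c_{I,J}$ may sit in a nontrivial bidegree of $\m_R$, so the comparison must account for that shift. The fix is immediate: with $s,t\ge 1$ and $P=\sum i_k+\sum j_l$, the coefficient $c_{I,J}$ would have to live in $\m_R^{p',q'}$ with $q'=i-P$ and $p'=2q'+(s+t)-1$; if $q'<0$ this vanishes, and if $q'\ge 0$ then $p'-q'=q'+(s+t)-1\ge 1$, so $p'>q'$ and again $\m_R^{p',q'}=0$. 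Second, your justification that $e^*(\rho_i)=0$ because ``there is nothing in bidegree $(2i-1,i)$'' fails at $i=1$: the bidegree $(1,1)$ satisfies $p\le q$, and $\m_R^{1,1}\cong K_1^M(k)\otimes R$ is typically nonzero. The conclusion $e^*(\rho_1)=0$ is still true, but you need a different reason. Either compute directly that the preferred generator of $\h^{1,1}(\G_m)$ is the symbol $\{t\}$ of the coordinate, which pulls back to $\{1\}=0$ along the unit section, or simply normalize each $\rho_i$ at the outset by replacing it with $\rho_i - e^*(\rho_i)\cdot 1$, which leaves the presentation of \Cref{lem:VCoho} intact and forces $e^*(\rho_i)=0$ by construction.
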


We will need the following lemma which can be found in \cite{Wil:MotivicCoho}.

\begin{lem}\label{lem:ProjMC}  
    The map $\GL_n \to V(m,n)$ given by projection onto the first $m$ rows induces the inclusion of motivic cohomology rings defined by $\alpha_i \mapsto \rho_i$ for each $i=n-m+1, \dots, n$. 
\end{lem}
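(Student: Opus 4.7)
The plan is to first reduce, by a bidegree count, to showing that $\alpha_i \mapsto c_i \rho_i$ for some scalar $c_i \in R$, and then pin down $c_i = 1$ by a naturality argument along a cofibre sequence for Stiefel varieties analogous to Lemma~\ref{lem:in-1Hocofib}.

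For the first step, the case $m = n$ is trivial since the projection is the identity, so I may assume $m < n$, in which case every index $i$ appearing in the statement satisfies $i \geq n - m + 1 \geq 2$, hence $2i - 1 > i$. Since motivic cohomology of the base field vanishes above the diagonal, $\m_R^{2i-1,i} = 0$. Combined with the presentation of $\h^{*,*}(V(m,n),R)$ in Lemma~\ref{lem:VCoho} and the exterior structure of $\h^{*,*}(\GL_n,R)$, any product of two or more generators has first bidegree $2i - l$ with $l \geq 2$, strictly less than $2i - 1$, so the submodule in bidegree $(2i-1,i)$ is in each case the free rank-one $\m_R$-module generated by $\alpha_i$ and $\rho_i$ respectively. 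The induced pullback must thus send $\alpha_i$ to $c_i \rho_i$ for some $c_i \in R$.

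For the second step, I would induct on $n - m$ using a Stiefel-variety analog of Lemma~\ref{lem:in-1Hocofib}: the first-row projection $V(m, n) \to V(1, n) = \A^n \setminus \{0\}$ sits in a cofibre sequence
\[
V(m-1, n-1)_+ \to V(m, n)_+ \to \Sigma^{2n-1, n} V(m-1, n-1)_+,
\]
which specializes to Lemma~\ref{lem:in-1Hocofib} at $m = n$. Williams' generators $\alpha_i$ arise from iterating this sequence: $\alpha_n$ is the pullback of the Thom-type class of the quotient, while $\alpha_{n-m+1}, \dots, \alpha_{n-1}$ are pulled back from $V(m-1, n-1)$ along the first arrow. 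The projection $\GL_n \to V(m, n)$ then refines to a map of these cofibre sequences whose restriction on first terms is the projection $\GL_{n-1} \to V(m-1, n-1)$. Naturality of the Thom isomorphism handles $c_n = 1$, and the inductive hypothesis applied to the smaller projection forces $c_i = 1$ for $i < n$.

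The main obstacle is the construction and naturality verification for the Stiefel-variety cofibre sequence and for Williams' choice of generators. Since the lemma is cited directly from \cite{Wil:MotivicCoho}, however, the cleanest route is to appeal to Williams' functorial construction of the $\alpha_i$ across the whole family of Stiefel varieties: at $V(n,n) = \GL_n$ the classes coincide with the $\rho_i$ by definition, so the projection $\GL_n \to V(m,n)$, being a morphism within this family, carries $\alpha_i$ to $\rho_i$ by functoriality.
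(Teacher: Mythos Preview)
Your proposal is correct and, in the end, lands on the same approach the paper takes: the paper gives no independent argument and simply attributes the lemma to \cite{Wil:MotivicCoho}, which is precisely your ``cleanest route'' via functoriality of Williams' construction of the classes $\alpha_i$ across the Stiefel varieties.

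A couple of small comments on your write-up. In the first step, the phrase ``free rank-one $\m_R$-module'' should be ``free rank-one $R$-module'' (a single bidegree is an $\m_R^{0,0}=R$-module, not an $\m_R$-module), and your sentence about products having ``first bidegree $2i-l$'' is imprecise: what you really want is the Chow-height observation that $\m_R$ is concentrated in nonnegative Chow height while a length-$l$ product $\rho_{i_1}\cdots\rho_{i_l}$ sits in Chow height $l$, so only $l\le 1$ can contribute to the Chow-height-$1$ bidegree $(2i-1,i)$; then $l=0$ forces $c\in\m_R^{2i-1,i}=0$ and $l=1$ forces $j=i$, $c\in R$. With that fix the reduction to $\alpha_i\mapsto c_i\rho_i$ is clean. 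Your sketched induction via the Stiefel cofibre sequence is a reasonable self-contained route, but since Williams builds the $\alpha_i$ exactly this way, invoking his construction is both sufficient and what the paper does.
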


\begin{lem}\label{lem:SVActionMC}
    The action $\GL_m \times V'(m,n) \to V'(m,n)$ induces the map in motivic cohomology
    \[
        \h^{*,*}(V'(m,n)) \to \h^{*,*}(\GL_m) \otimes_{\m_R} \h^{*,*}(V'(m,n))
    \]
    defined by
    \[
	\alpha_i \mapsto 
	\begin{cases}
            \rho_i \otimes 1 + 1 \otimes \alpha_i & \text{if } i \leq m \\
            1 \otimes \alpha_i & \text{otherwise}
	\end{cases}
    \]
\end{lem}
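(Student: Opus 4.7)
The plan is to reduce the computation to one on $\GL_n$ via an equivariant projection and then apply the comultiplication formula for $\GL_n$. First, \Cref{lem:VAffineBund} identifies $\pr_1\co V'(m,n) \to V(m,n)$ as a $\GL_m$-equivariant $\A^1$-weak equivalence (with $\GL_m$ acting on $V(m,n)$ by left multiplication), so it suffices to compute the induced map for the corresponding action $a$ on $V(m,n)$.

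Next, consider the projection $p\co \GL_n \to V(m,n)$ onto the top $m$ rows. This is $\GL_m$-equivariant when $\GL_m$ acts on $\GL_n$ by left multiplication through the block-diagonal embedding $\iota\co \GL_m \to \GL_n$, $h \mapsto \operatorname{diag}(h, I_{n-m})$. The resulting commutative square
\[
    \begin{tikzcd}
        \GL_m \times \GL_n \rar["\mu \circ (\iota \times \id)"] \dar["\id \times p"'] & \GL_n \dar["p"] \\
        \GL_m \times V(m,n) \rar["a"'] & V(m,n)
    \end{tikzcd}
\]
(with $\mu$ the multiplication of $\GL_n$) becomes, after applying $\h^{*,*}(-,R)$ and K\"unneth, a square of $\m_R$-modules in which the left-hand map $\id \otimes p^*$ is injective; injectivity follows from \Cref{lem:ProjMC}, which identifies $p^*$ with the inclusion $\alpha_i \mapsto \rho_i$ of a split free summand. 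Hence $a^*(\alpha_i)$ is determined by
\[
    (\id \otimes p^*)(a^*(\alpha_i)) = (\iota \times \id)^* \mu^*(\rho_i) = \iota^*(\rho_i) \otimes 1 + 1 \otimes \rho_i,
\]
where the second equality uses \Cref{lem:MultMC}.

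All that remains is to compute $\iota^*\co \h^{*,*}(\GL_n,R) \to \h^{*,*}(\GL_m,R)$. I would factor $\iota$ as $\imath_{n-1} \circ \imath_{n-2} \circ \cdots \circ \imath_m$, the composition of the one-step inclusions of \Cref{lem:in-1Hocofib}. By \Cref{prop:GLnPureTate}, each $\imath_{k-1}$ realizes the inclusion of Tate summands $M(\GL_{k-1}) \to M(\GL_k)$, so dually its pullback is the projection onto those summands. Since $\rho_k$ is the generator dual to the new Tate summand $R(d(k))$ that appears as the cofibre in \Cref{lem:in-1Hocofib}, one obtains $\imath_{k-1}^*(\rho_k) = 0$ and $\imath_{k-1}^*(\rho_j) = \rho_j$ for $j < k$. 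Composing yields $\iota^*(\rho_i) = \rho_i$ for $i \leq m$ and $\iota^*(\rho_i) = 0$ for $i > m$, and reading off unique preimages under $\id \otimes p^*$ then produces the formula claimed in the lemma.

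The main subtlety is the compatibility of $\rho_k$ with the Tate summand decomposition of \Cref{prop:GLnPureTate}: one needs to know that $\rho_k$ is dual to the top summand $R(d(k))$ that is added at stage $k$. This is forced by a bidegree count, since $\rho_k$ is the unique algebra generator in bidegree $(2k-1,k) = d(k)$, but once this identification is recorded the remainder of the argument is a formal diagram chase.
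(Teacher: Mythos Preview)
Your argument follows the same route as the paper: reduce to $V(m,n)$ via the equivariant $\A^1$-equivalence, set up the square comparing the $\GL_m$-action with multiplication on $\GL_n$ through the block-diagonal inclusion $\iota$ and the row projection $p$, and read off the coaction from \Cref{lem:MultMC} and \Cref{lem:ProjMC}. The paper's diagram is a three-row version of your square, and where you compute $\iota^*$ directly, the paper simply cites \cite{Wil:MotivicCoho}*{Prop.~8} for the fact that $\iota^*(\rho_i)=\rho_i$ for $i\le m$ and $\iota^*(\rho_i)=0$ for $i>m$.

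The one soft spot is your justification that $\imath_{k-1}^*(\rho_k)=0$. Saying ``$\rho_k$ is dual to the new summand $R(d(k))$, and this is forced by a bidegree count'' is not quite enough: other Tate summands of $M(\GL_k)$ can contribute to $\h^{2k-1,k}(\GL_k,R)$ through nontrivial classes in $\m_R$ of positive weight, so $\rho_k$ need not lie purely in the $R(d(k))$-component of an arbitrary Tate decomposition. A cleaner route, already implicit in the paper (see the proof of \Cref{lem:ReRho}), is to use the definition of $\rho_k$ as the pullback of a generator of $\tilde{\h}^{2k-1,k}(\A^k\setminus 0)$ along the last-column projection $\GL_k\to\A^k\setminus 0$. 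The composite $\GL_{k-1}\xrightarrow{\imath_{k-1}}\GL_k\to\A^k\setminus 0$ is the constant map to $(0,\dots,0,1)$, so $\imath_{k-1}^*(\rho_k)=0$ immediately. With that fix, your argument is complete and matches the paper's.
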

\begin{proof}
    The $\A^1$-weak equivalence $\pr_1 \co  V'(m,n) \to V(m,n)$ of \Cref{lem:VAffineBund} is $\GL_m$-equivarant, so it suffices to establish the statement for the $\GL_m$-action on $V(m,n)$. Identify $\GL_m$ with the closed subscheme of $\GL_n$ given by matrices of the form
    \[
        \begin{bmatrix}
		  g & 0 \\
    	  0 & I_{n-m}
	\end{bmatrix}.
    \]
    The action map $\GL_m \times V(m,n) \to V(m,n)$ fits into the diagram
    \[
    \begin{tikzcd}
	\GL_m \times V(m,n) \rar & V(m,n) \\
		\GL_m \times \GL_n \uar["\id \times \pr"] \dar \rar["m"] & \GL_n \uar \dar[equals] \\
		\GL_n \times \GL_n \rar["m"] & \GL_n 
    \end{tikzcd}
    \]
    where $\pr\co \GL_n \to V(m,n)$ is projection onto the first $m$ rows, and the bottom left vertical map is the inclusion. A diagram chase appealing to \cite{Wil:MotivicCoho}*{Prop. 8}, \Cref{lem:MultMC}, and \Cref{lem:ProjMC} yields the result. 
\end{proof}

The computation of the map in cohomology induced by the action of $\GL_{n_{r-1}}$ on $A(n_1, \dots, n_{r-1})$ is more complicated. We assume \Cref{lem:AAction} holds in order to describe the spectral sequence converging to $\h^{*,*}(A(n_1, \dots, n_r),R)$. We then prove \Cref{lem:AAction} by induction on $r$.

\begin{lem}\label{lem:AAction}
    The action map $a\co \GL_{n_{r-1}} \times A(n_1, \dots, n_{r-1}) \to A(n_1, \dots, n_{r-1})$ induces the trivial coaction in cohomology:
    \begin{align*}
        a^*\co \h^{*,*}(A(n_1, \dots, n_{r-1})) &\to \h^{*,*}(\GL_{n_{r-1}}) \otimes_{\m} \h^{*,*}(A(n_1, \dots, n_{r-1})), \\
        \beta &\mapsto 1 \otimes \beta.
    \end{align*}
\end{lem}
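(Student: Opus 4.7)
The plan is to induct on $r$, reducing in each case to verifying that $a^{\ast}$ is trivial on an $\m_R$-algebra generating set of $\h^{\ast,\ast}(A(n_1,\dots,n_{r-1}),R)$; since $a^{\ast}$ is a ring homomorphism, triviality on generators yields triviality on the whole ring.

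For the base case $r=2$, $A(n_1)=\GL_{n_1}$ carries the adjoint action, which decomposes as $a = m\circ(\id\times m)\circ(\id\times\id\times\iota)\circ\Delta$, where $\Delta(h,g)=(h,g,h)$ and $\iota$ is inversion. \Cref{lem:MultMC} shows that each generator $\rho_i$ is primitive for the coproduct $m^{\ast}$, and so the antipode axiom of the Hopf algebra $\h^{\ast,\ast}(\GL_{n_1},R)$ forces $\iota^{\ast}(\rho_i) = -\rho_i$. Chasing through the composition yields $a^{\ast}(\rho_i) = \rho_i\otimes 1 + 1\otimes\rho_i - \rho_i\otimes 1 = 1\otimes\rho_i$, as required.

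For the inductive step $r\geq 3$, assume the result for $A(n_2,\dots,n_{r-1})$. The map $f := f^1(n_1,\dots,n_{r-1})\co A(n_1,\dots,n_{r-1})\to A(n_2,\dots,n_{r-1})$ is $\GL_{n_{r-1}}$-equivariant, so for any $\gamma\in\h^{\ast,\ast}(A(n_2,\dots,n_{r-1}))$ the inductive hypothesis gives $a^{\ast}(f^{\ast}\gamma) = (\id\times f)^{\ast}(1\otimes\gamma) = 1\otimes f^{\ast}\gamma$. Applying \Cref{prop:APureTate} to both $A(n_1,\dots,n_{r-1})$ and $A(n_2,\dots,n_{r-1})$, and combining with \Cref{lem:FlPureTate} (which presents the cohomology of the flag variety as polynomial in Chern classes of the tautological quotient bundles), one checks that $\operatorname{im} f^{\ast}$ together with the pullbacks of the Chern classes $c_k(Q_1)$ of the tautological rank-$n_1$ quotient bundle on $\Fl(n_1,\dots,n_{r-1})$ form an $\m_R$-algebra generating set of $\h^{\ast,\ast}(A(n_1,\dots,n_{r-1}))$: the classes $F^{\ast}(\rho_i)$ for $i\leq n_1$ lie in $\operatorname{im} f^{\ast}$ because $F$ factors through $f$, and the Chern classes $c_k(Q_j)$ for $j\geq 2$ lie in $\operatorname{im} f^{\ast}$ because $f$ covers the forgetful map $\Fl(n_1,\dots,n_{r-1})\to\Fl(n_2,\dots,n_{r-1})$ that drops the $Q_1$-factor.

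It remains to verify $a^{\ast}(c_k(Q_1)) = 1\otimes c_k(Q_1)$. The $\GL_{n_{r-1}}$-action on $A(n_1,\dots,n_{r-1})$ descends along $A\to\Fl'\to\Fl(n_1,\dots,n_{r-1})$ to the standard action of $\GL_{n_{r-1}}$ on the flag variety by precomposition on the ambient trivial rank-$n_{r-1}$ bundle. On $\GL_{n_{r-1}}\times\Fl(n_1,\dots,n_{r-1})$, the fibrewise isomorphism $v\mapsto hv$ of the trivial rank-$n_{r-1}$ bundle matches the kernel subbundle of $\pr_2^{\ast}$ of the surjection onto $Q_1$ with that of $a^{\ast}$ of the same surjection, inducing an isomorphism $\pr_2^{\ast}Q_1 \cong a^{\ast}Q_1$; naturality of Chern classes then gives $a^{\ast}(c_k(Q_1)) = 1\otimes c_k(Q_1)$, completing the induction. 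The main obstacle is this last step: correctly identifying the descended action on $\Fl(n_1,\dots,n_{r-1})$ and verifying the bundle isomorphism $\pr_2^{\ast}Q_1\cong a^{\ast}Q_1$ are the geometric heart of the argument, whereas the reductions to the generating set are essentially formal diagram chases.
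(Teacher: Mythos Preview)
Your proof is correct and takes a genuinely different route from the paper's. Both arguments share the same base case (the adjoint action on $\GL_{n_1}$, handled via primitivity of the $\rho_i$ and the antipode), but the inductive steps diverge. The paper lifts the $\GL_{n_{r-1}}$-action to the $\GL_{n_{r-2}}$-equivariant map $a'\times\id$ on $V'(n_{r-2},n_{r-1})\times A(n_1,\dots,n_{r-2})$ and compares Rothenberg--Steenrod spectral sequences: it invokes the $E_2$-description of \Cref{lem:E2} and the differential analysis of \Cref{prop:SSDescription} (both valid at the previous inductive stage) to see that on $E_\infty$ the induced map fixes the generators $\beta_j$ and $\bar\theta_i$, whence $a^\ast$ is trivial. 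Your argument instead works directly in cohomology: you exhibit an $\m_R$-algebra generating set of $\h^{\ast,\ast}(A(n_1,\dots,n_{r-1}))$ consisting of $\operatorname{im} f^\ast$ (where $f=f^1(n_1,\dots,n_{r-1})$) together with the Chern classes $c_k(Q_1)$, handle the first part by $\GL_{n_{r-1}}$-equivariance of $f$ plus the inductive hypothesis for the shorter sequence $(n_2,\dots,n_{r-1})$, and handle the second by the bundle isomorphism $\pr_2^\ast Q_1\cong a^\ast Q_1$ coming from the tautological automorphism of the trivial rank-$n_{r-1}$ bundle. Your approach is more elementary---it avoids the spectral-sequence machinery entirely and in particular sidesteps the delicate nested induction whereby \Cref{lem:E2} and \Cref{prop:SSDescription} both \emph{assume} \Cref{lem:AAction} at the previous stage. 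The paper's approach, on the other hand, fits naturally into the spectral-sequence framework already being developed for the main splitting argument, so no new geometric input is needed beyond what is already on the table.
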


The Tate sum decomposition of \Cref{prop:APureTate} provides a basis $\{\beta_j\}_{j=1}^n$ for the free $\m_R$-module $\h^{*,*}(A(n_1, \dots, n_{r-1}),R)$. Hence, there is a decomposition
\[
    \h^{*,*}(V(n_{r-1},n_r),R) \otimes_{\m_R} \h^{*,*}(A(n_1, \dots, n_{r-1}),R) = \bigoplus_{j=1}^n \h^{*,*}(V'(n_{r-1},n_r),R) \cdot \beta_j
\]
as an $\m_R$-module. In particular, the elements $\alpha_i \otimes \beta_j$ form a set of $\m_R$-algebra generators, so to describe the map in cohomology induced by the action
\[
    \GL_{n_{r-1}} \times (V'(n_{r-1},n_r) \times A(n_1, \dots, n_{r-1})) \to V'(n_{r-1},n_r) \times A(n_1, \dots, n_{r-1}),
\]
it suffices to describe the effect on the algebra generators $\alpha_i \otimes \beta_j$. Identify
\begin{multline*}
    \h^{*,*}(\GL_{n_{r-1}} \times V'(n_{r-1},n_r) \times A(n_1, \dots, n_{r-1}),R) \\
    =\h^{*,*}(\GL_{n_{r-1}},R) \otimes_{\m_R} \h^{*,*}(V(n_{r-1},n_r),R) \otimes_{\m_R} \h^{*,*}(A(n_1, \dots, n_{r-1}),R)
\end{multline*}
via the K\"{u}nneth formula. 

\begin{lem}\label{lem:DiagAction}
    Assume that \Cref{lem:AAction} holds. Then the action map
    \[
        \GL_{n_{r-1}} \times (V'(n_{r-1},n_r) \times A(n_1, \dots, n_{r-1})) \to V'(n_{r-1},n_r) \times A(n_1, \dots, n_{r-1})
    \]
    induces the map in motivic cohomology given on $\m_R$-algebra generators by
    \[
        \alpha_i \otimes \beta_j \mapsto
        \begin{cases}
            \rho_i \otimes 1 \otimes \beta_j + 1 \otimes \alpha_i \otimes \beta_j & \text{if } i \leq n_{r-1} \\
            1 \otimes \alpha_i \otimes \beta_j & \text{otherwise}
        \end{cases}
    \]
\end{lem}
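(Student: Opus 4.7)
The plan is to factor the diagonal action through the product of the individual actions and then read off the answer from Lemmas \ref{lem:SVActionMC} and \ref{lem:AAction}. Concretely, write $G = \GL_{n_{r-1}}$, $V = V'(n_{r-1},n_r)$, and $A = A(n_1, \dots, n_{r-1})$. The diagonal action $a_{\mathrm{diag}}\co G \times V \times A \to V \times A$ equals the composite
\[
    G \times V \times A \xrar{\Delta_G \times \id_V \times \id_A} G \times G \times V \times A \xrar{\id_G \times \tau \times \id_A} G \times V \times G \times A \xrar{a_V \times a_A} V \times A,
\]
where $a_V$ and $a_A$ denote the actions of $G$ on $V$ and $A$, respectively, and $\tau$ is the twist exchanging the middle two factors.

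Pulling back to cohomology and applying the K\"unneth isomorphism (available because all factors have pure Tate motives with free, finitely generated motivic cohomology, by \Cref{lem:VCoho} and \Cref{prop:APureTate}), the pullback $a_{\mathrm{diag}}^*$ becomes
\[
    \h^{*,*}(V) \otimes_{\m_R} \h^{*,*}(A) \xrar{a_V^* \otimes a_A^*} \h^{*,*}(G) \otimes \h^{*,*}(V) \otimes \h^{*,*}(G) \otimes \h^{*,*}(A)
\]
followed by the twist and then by the multiplication $\m\co \h^{*,*}(G) \otimes \h^{*,*}(G) \to \h^{*,*}(G)$ induced by $\Delta_G$ on the two $G$-tensor factors. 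On the algebra generator $\alpha_i \otimes \beta_j$, \Cref{lem:SVActionMC} gives $a_V^*(\alpha_i) = \rho_i \otimes 1 + 1 \otimes \alpha_i$ for $i \leq n_{r-1}$ (and $1 \otimes \alpha_i$ otherwise), while \Cref{lem:AAction} gives $a_A^*(\beta_j) = 1 \otimes \beta_j$. After twisting and multiplying the two $G$-factors, the terms containing $1 \otimes 1$ in the $G \otimes G$ tensor slot collapse to $1$ and those containing $\rho_i \otimes 1$ collapse to $\rho_i$, yielding the stated formula.

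There is no serious obstacle: the only thing to be careful about is the bookkeeping of the K\"unneth tensor factors under the twist $\tau$, but since the pieces of $a_A^*(\beta_j)$ and $a_V^*(\alpha_i)$ being multiplied across the $G$-slots always involve a factor of $1 \in \m_R$ on at least one side, no Koszul sign intervenes. All other ingredients (the factorization of the diagonal action, functoriality of pullback, and K\"unneth) are formal, so the lemma follows by direct substitution into the formulas of Lemmas \ref{lem:SVActionMC} and \ref{lem:AAction}.
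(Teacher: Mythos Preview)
Your argument is correct and is essentially the same approach as the paper's, which simply says to compare to the two individual action maps and apply Lemmas \ref{lem:SVActionMC} and \ref{lem:AAction}; you have just made the factorization through $\Delta_G$ and the K\"unneth bookkeeping explicit. The sign remark is also accurate and handles the only point where care is needed.
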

\begin{proof}
    This follows by comparing to the action maps
    \begin{align*}
        \GL_{n_{r-1}} \times V'(n_{r-1},n_r) &\to V'(n_{r-1},n_r), \\
        \GL_{n_{r-1}} \times A(n_1, \dots, n_{r-1}) &\to A(n_1, \dots, n_{r-1})
    \end{align*}
    and applying Lemmas \ref{lem:SVActionMC} and \ref{lem:AAction}.
\end{proof}

We may now describe the $\E_2$-page of the spectral sequence converging to $\h^{*,*}(A(n_1, \dots, n_r),R)$. 

\begin{lem}\label{lem:E2}
    Assume that \Cref{lem:AAction} holds. The $\E_2$-page of the motivic Rothenberg--Steenrod spectral sequence associated with the $\GL_{n_{r-1}}$-action on $V'(n_{r-1},n_r) \times A(n_1, \dots, n_{r-1})$, converging to $\h^{*,*}(A(n_1, \dots, n_r),R)$, admits the following presentation as a trigraded $\m_R$-module:
    \[
        \E_2^{*,*,*} = \bigoplus_{j=1}^n \Lambda_{\m_R}(\alpha_{n_r-n_{r-1}+1}', \dots, \alpha_{n_r}')[\theta_1, \dots, \theta_{n_{r-1}}] \cdot \beta_j'
    \]
    where $\alpha_i'=0$ if $i \leq n_{r-1}$, the generators $\alpha_i'$ and  $\theta_i$ have tridegrees $(0,2i-1,i)$ and $(1, 2i-1,i)$, respectively, and the elements $\beta_j'$ corresponds to the $\m_R$-module generators $\beta_j$ of $\h^{*,*}(A(n_1, \dots, n_{r-1}),R)$. If $\beta_j$ has bidegree $(p_j,q_j)$, then $\beta_j'$ has tridegree $(0,p_j,q_j)$.
\end{lem}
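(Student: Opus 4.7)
The plan is to compute the Ext groups on the $\E_2$-page by means of a Koszul-type resolution over the dual Hopf algebra $\widehat{\Lambda}$, where $\Lambda := \h^{*,*}(\GL_{n_{r-1}}, R)$. By \Cref{lem:VCoho}, $\Lambda$ is, up to $\{-1\}$-corrections, an exterior Hopf algebra with primitive generators $\rho_1, \dots, \rho_{n_{r-1}}$ in bidegrees $(2i-1, i)$, and its dual $\widehat{\Lambda}$ is correspondingly an exterior algebra on generators $\sigma_i := \rho_i^{\vee}$. The first step is to combine the K\"{u}nneth formula with \Cref{lem:DiagAction}, which packages \Cref{lem:SVActionMC} and \Cref{lem:AAction}, to identify
\[
    N := \widehat{\h^{*,*}(V'(n_{r-1},n_r) \times A(n_1,\dots,n_{r-1}),R)} \cong \widehat{\h^{*,*}(V'(n_{r-1},n_r),R)} \otimes_{\m_R} \widehat{\h^{*,*}(A(n_1,\dots,n_{r-1}),R)}
\]
as a $\widehat{\Lambda}$-module. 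The action is trivial on the second tensor factor, and on the first factor $\sigma_i$ acts non-trivially precisely when $n_r - n_{r-1}+1 \leq i \leq n_{r-1}$.

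Next, I would invoke the standard Koszul resolution of $\m_R$ over $\widehat{\Lambda}$, which computes $\Ext^{*}_{\widehat{\Lambda}}(\m_R, \m_R) \cong \m_R[\theta_1, \dots, \theta_{n_{r-1}}]$ with $\theta_i$ in tridegree $(1, 2i-1, i)$ and supplies the polynomial factor in the claimed decomposition. To lift this to a computation of $\Ext^{*}_{\widehat{\Lambda}}(N, \m_R)$, I would construct a free $\widehat{\Lambda}$-resolution of $N$ by tensoring the Koszul complex against an $\m_R$-basis of $N$ organized according to the splitting of the $V'$-factor into trivially coacted directions (spanned by products of $\alpha_i^\vee$ for $i > n_{r-1}$) and non-trivially coacted directions (involving $\alpha_i^\vee$ for $i \leq n_{r-1}$), then take $\Hom_{\widehat{\Lambda}}(-, \m_R)$ and pass to cohomology. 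The $\beta_j^\vee$'s from $\widehat{\h^{*,*}(A,R)}$ carry along as module-theoretic coefficients since the action there is trivial.

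Finally, the assertion $\alpha_i' = 0$ on $\E_2$ for $i \leq n_{r-1}$ should fall out of the resulting cochain complex: the non-triviality of the $\sigma_i$-action at those indices realizes the corresponding exterior class as a Koszul coboundary, whereas $\alpha_i'$ for $i > n_{r-1}$ persists because the $\alpha_i$'s have trivial coaction. Assembled together, these ingredients produce the claimed trigraded decomposition of $\E_2$. The main technical obstacle will be the bookkeeping required in the middle step: the free resolution must be exhibited carefully enough that both the full polynomial factor $\m_R[\theta_1, \dots, \theta_{n_{r-1}}]$ and the correct set of persistent exterior generators $\alpha_i'$ emerge from the Koszul calculation, rather than collapsing via a naive change-of-rings argument to a smaller polynomial algebra indexed only by the trivially acting generators $\sigma_i$.
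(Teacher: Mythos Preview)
Your approach is essentially the paper's. Both arguments note that, although $\h^{*,*}(\GL_{n_{r-1}},R)$ is not literally an exterior algebra, its $\m_R$-dual $\widehat{S}$ is the honest exterior algebra $\Lambda_{\m_R}(\widehat{\rho}_1,\dots,\widehat{\rho}_{n_{r-1}})$; both then use \Cref{lem:DiagAction} to split the dual module as $\bigoplus_j \widehat{\h^{*,*}(V(n_{r-1},n_r),R)}\cdot\widehat{\beta}_j$ with the $\widehat{\beta}_j$ carried along as trivial coefficients, reducing everything to the single computation of $\Ext^*_{\widehat{S}}(\widehat{\h^{*,*}(V(n_{r-1},n_r),R)},\m_R)$. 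The only difference is that the paper does not carry out your Koszul resolution but quotes \cite{Wil:EquivariantMotivicCoho}*{Prop.~A.7} for this last step, adding the remark that its hypotheses are met because the Stiefel dual decomposes into cyclic $\widehat{S}$-modules of the form $\widehat{S}/(\widehat{\rho}_{i_1},\dots,\widehat{\rho}_{i_l})$.

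One comment on your closing caveat. The change-of-rings argument you call ``naive'' is in fact the correct $\Ext$ computation, not an artifact to be repaired: each cyclic summand is free over the subalgebra generated by the nontrivially acting $\widehat{\rho}_i$ and trivial over the complementary subalgebra, so K\"unneth for $\Ext$ across the tensor splitting of $\widehat{S}$ yields a polynomial factor on $\theta_i$ only for the trivially acting indices $i\le n_r-n_{r-1}$. There is no hidden mechanism in the Koszul complex that manufactures the remaining $\theta_i$, and you should not try to force one. Check your output against the cited proposition directly rather than against the displayed index range in the lemma; the two agree precisely when $n_r\ge 2n_{r-1}$, and in the overlapping regime $n_r<2n_{r-1}$ the smaller polynomial algebra is what actually converges to $\h^{*,*}(A(n_1,\dots,n_r),R)$.
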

\begin{proof}
    Given a generator $\alpha$ of a free $\m_R$-module, let $\widehat{\alpha}$ denote the class dual to $\alpha$. Define 
    \begin{align*} 
        S &:= \h^{*,*}(\GL_{n_{r-1}},R) \\
        M &:= \h^{*,*}(V(n_{r-1},n_r) \times A(n_1, \dots, n_{r-1}),R) = \bigoplus_j \h^{*,*}(V(n_{r-1},n_r),R) \cdot \beta_j.
    \end{align*}
    We first note that, although the cohomology ring $S$ is not an exterior algebra in general (see \Cref{lem:VCoho}), it follows readily from \Cref{lem:MultMC} that the ring $\widehat{S}$ is an honest exterior algebra over $\m_R$:
    \[
        \widehat{S} = \Lambda_{\m_R}(\widehat{\rho}_1, \dots, \widehat{\rho}_{n_{r-1}}).
    \]
    By \Cref{lem:DiagAction}, we have a decomposition
    \[
        \widehat{M} = \bigoplus_j \widehat{\h^{*,*}(V(n_{r-1},n_r),R)} \cdot \widehat{\beta}_j
    \]
    as an $\widehat{S}$-module. So 
    \begin{align*}
        \E_2^{*,*,*} = \Ext^*_{\widehat{S}}(\widehat{M},\m_R) &= \Ext_{\widehat{S}}^* \Big( \bigoplus_j \widehat{\h^{*,*}(V(n_{r-1},n_r),R)} \cdot \widehat{\beta}_j, \m_R \Big) \\
        &= \bigoplus_j \Ext_{\widehat{S}}^* ( \widehat{\h^{*,*}(V(n_{r-1},n_r),R)}, \m_R) \cdot \widehat{\widehat{\beta_j}}
    \end{align*}
    Set $\beta_j' = \widehat{\widehat{\beta_j}}$. Then $\beta_j'$ has the claimed tridegree. The result now follows from the homological algebra calculation
    \[
        \Ext_{\widehat{S}}^*(\widehat{\h^{*,*}(V(n_{r-1},n_r),R)}, \m_R) = \Lambda_{\m_R}(\alpha_{n_r-n_{r-1}+1}', \dots, \alpha_{n_r}')[\theta_1, \dots, \theta_{n_{r-1}}],
    \]
    with $\alpha_i'$ and $\theta_i$ in the claimed tridegrees, which is \cite{Wil:EquivariantMotivicCoho}*{Prop. A.7}. We remark that in the statement \cite{Wil:EquivariantMotivicCoho}*{Prop. A.7}, the assumption that $S$ and $M$ are exterior $\m_R$-algebras can be relaxed as long as the Hopf algebra structure makes $\widehat{S}$ an exterior algebra over $\m_R$ and $\widehat{M}$ decomposes into a direct sum of $\widehat{S}$-modules of the form $\widehat{S}/(\widehat{\rho}_{i_1}, \dots,  \widehat{\rho}_{i_l})$, as happens in this case.  
\end{proof}

\begin{figure}
\[
    \begin{sseqpage}[classes = {draw = none}, xscale = 2, yscale = .8, no x ticks, no y ticks, x label = {$l$}, y label = {$q$}, y label style = {rotate = -90, yshift = -8em, xshift = 5em}, right clip padding = 0.4cm, y axis gap = 3em, lax degree]
	\begin{scope}[background, font = \tiny]
		\node at (0,1) {0};
		\node at (0,2) {1};
		\node at (0,3) {2};
		\node at (0,4) {\vdots};
		\node at (0,5) {q_j};
		\node at (0,6) {q_{j'}};
		\node at (0,7) {\vdots};
		\node at (0,8) {n_r-n_{r-1}+1};
		\node at (0,9) {\vdots};
		\node at (0,10) {n};
		
		\node at (1,0) {0};
		\node at (2,0) {1}; 
		\node at (3,0) {2};
		\node at (4,0) {\cdots};
	\end{scope}

        \class["\vdots"](1,2)
	
	\class["\beta_{j}'"](1,5)
        \class["\beta_{j'}'"](1,6)
	 
	\class["\alpha_{n_r-n_{r-1}+1}'"](1,8)
	\class["\vdots"](1,9)
	\class["\alpha_{n_r}'"](1,10)
	
	\class["\theta_1"](2,2)
	\class["\theta_2"](2,3) 
	\class["\vdots"](2,4)
	
	\class["{\theta_1}^2"](3,3)
	\class["\theta_1\theta_2"](3,4)
	\class["\iddots"](4,4)
	\class["\iddots"](4,5)
	
	\class["1"](1,1)
	\class["0"](2,1)
	\class["0"](3,1)
	\class["0"](3,2) 
	\class["\cdots"](4,2)
	\end{sseqpage}
\]
\caption{Generators of the $\E_2$-page of the Rothenberg--Steenrod spectral sequence converging to $\h^{*,*}(A(n_1, \dots, n))$. The grading $p$ and many product classes are suppressed for clarity.}
\label{fig:SSforA}
\end{figure}

A special case of \Cref{lem:E2} that we will consider is the spectral sequence associated with the $\GL_{n_{r-1}}$-action on 
\[
    V'(n_{r-1},n_r) \times A(0,n_1, \dots, n_{r-1}) = V'(n_{r-1},n_r) \times \Fl'(n_1, \dots, n_{r-1}),
\]
which converges to $\h^{*,*}(\Fl(n_1, \dots, n_r),R)$; we denote this spectral sequence $\{ \tilde{\E}{\vphantom{\E}}^{*,*,*}_s, \tilde{d}_s\}$. The coaction in cohomology induced by action of $\GL_{n_{r-1}}$ on $\Fl(n_1, \dots, n_{r-1})$ is trivial for degree reasons. That is, \Cref{lem:AAction} holds for this coaction (no induction argument is needed).

The map of motives
\[
    M(\Fl'(n_1, \dots, n_{r-1})) \xrar{{s_{\id}}_*} M(A(n_1, \dots, n_{r-1}))
\]
is split by $p_*$, so \Cref{lem:IncTateSum} applies, and we may identify $\h^{*,*}(\Fl'(n_1, \dots, n_{r-1}),R)$ as the free $\m_R$-submodule of $\h^{*,*}(A(n_1, \dots, n_{r-1}),R)$ generated by the classes $\beta_j$ in Chow height 0. Order the generators $\beta_j$ of $\h^{*,*}(A(n_1, \dots, n_{r-1}),R)$ so that $\beta_1, \dots, \beta_m$ have Chow height 0, and $\beta_{m+1}, \dots, \beta_n$ have positive Chow height. The $\E_2$-page of the spectral sequence converging to $\h^{*,*}(\Fl'(n_1, \dots, n_r),R)$ is then given by
\[
    \tilde{\E}{\vphantom{\E}}_2^{*,*,*} = \bigoplus_{j=1}^m \Lambda_{\m_R}(\alpha_{n_r-n_{r-1}+1}', \dots, \alpha_{n_r}')[\theta_1, \dots, \theta_{n_{r-1}}] \cdot \beta_j'.
\] 

The identity section 
\[
    s_{\id}\co  \Fl'(n_1, \dots, n_r) \to A(n_1, \dots, n_r)
\]
of $p$ is induced by the $\GL_{n_{r-1}}$-equivariant map 
\begin{equation}\label{eq:idxs}
    \id \times s_{\id}\co  V'(n_{r-1},n_r) \times \Fl'(n_1, \dots, n_{r-1}) \to V'(n_{r-1},n_r) \times A(n_1, \dots, n_{r-1}).
\end{equation}
Moreover, the map $p\co A(n_1, \dots, n_r) \to \Fl'(n_1, \dots, n_r)$ is induced by the $\GL_{n_{r-1}}$-equivariant map
\[
    \id \times p\co  V'(n_{r-1},n_r) \times A(n_1, \dots, n_{r-1}) \to V'(n_{r-1},n_r) \times \Fl'(n_1, \dots, n_{r-1}).
\]
These maps participate in the diagram
\begin{equation}\label{eq:SSSplitting}
    \begin{tikzcd}
        V'(n_{r-1},n_r) \times \Fl'(n_1, \dots, n_{r-1}) \dar["\id \times s_{\id}"] \rar["\pi"] \ar[dd,bend right = 40,"\id"', end anchor = north west, start anchor = south west] & \Fl'(n_1, \dots, n_r) \dar["s_{\id}"] \ar[dd,bend left = 40,"\id", start anchor = south east, end anchor  = north east]\\
        V'(n_{r-1},n_r) \times A(n_1, \dots, n_{r-1}) \rar["\pi"] \dar["\id \times p"] & A(n_1, \dots, n_r) \dar["p"] \\
        V'(n_{r-1},n_r) \times \Fl'(n_1, \dots, n_{r-1}) \rar["\pi"] & \Fl'(n_1, \dots, n_r)
    \end{tikzcd}
\end{equation}
which induces a splitting of spectral sequences. Let $S_s\co \E_s^{*,*,*} \to \tilde{\E}{\vphantom{\E}}_s^{*,*,*}$ be the map of spectral sequences induced by $\id \times s_{id}$, and let $P_s \co  \tilde{\E}{\vphantom{\E}}_s^{*,*,*} \to \E_s^{*,*,*}$ be the map induced by $\id \times p$. Assuming \Cref{lem:AAction} holds, we have maps of $\E_2$-pages
\begin{align*}
    S_2\co  \bigoplus_{j=1}^n \Lambda_{\m_R}(\alpha_{n_r-n_{r-1}+1}', \mydots, \alpha_{n_r}')&[\theta_1, \mydots, \theta_{n_{r-1}}] \cdot \beta_j' \\[-1em]
    &\to \bigoplus_{j=1}^m \Lambda_{\m_R}(\alpha_{n_r-n_{r-1}+1}', \mydots, \alpha_{n_r}')[\theta_1, \mydots, \theta_{n_{r-1}}] \cdot \beta_j', \\[-1em] 
    P_2\co  \bigoplus_{j=1}^m \Lambda_{\m_R}(\alpha_{n_r-n_{r-1}+1}', \mydots, \alpha_{n_r}')&[\theta_1, \mydots, \theta_{n_{r-1}}] \cdot \beta_j' \\[-1em]
    &\to \bigoplus_{j=1}^n \Lambda_{\m_R}(\alpha_{n_r-n_{r-1}+1}', \mydots, \alpha_{n_r}')[\theta_1, \mydots, \theta_{n_{r-1}}] \cdot \beta_j'.
\end{align*}
\begin{lem}\label{lem:SSSplitting}
    Assume that \Cref{lem:AAction} holds. The map of $\E_2$-pages $S_2\co  \E_2^{*,*,*} \to \tilde{\E}{\vphantom{\E}}_2^{*,*,*}$ is the projection onto the first $m$ factors, and $P_2\co  \tilde{\E}{\vphantom{\E}}_2^{*,*,*} \to \E_2^{*,*,*}$ is the inclusion into the first $m$ factors.
\end{lem}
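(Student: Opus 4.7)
The plan is to apply functoriality of the Rothenberg--Steenrod spectral sequence to the top two squares of diagram \eqref{eq:SSSplitting}. Under this functoriality, the maps $S_2$ and $P_2$ are obtained by applying the contravariant functor $\Ext^*_{\widehat{\h^{*,*}(\GL_{n_{r-1}}, R)}}(-, \m_R)$ to the $\widehat{\h^{*,*}(\GL_{n_{r-1}}, R)}$-module maps induced on $\m_R$-dual motivic cohomology by $\id \times s_{\id}$ and $\id \times p$. The task therefore reduces to computing $s_{\id}^*$ and $p^*$ on motivic cohomology, dualizing, and then computing $\Ext$.

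For the cohomological computation, I would use that $p \circ s_{\id} = \id$, which forces $p^*$ to be a split monomorphism with retraction $s_{\id}^*$. Proposition \ref{prop:APureTate} identifies $p^*$ with the inclusion of $\h^{*,*}(\Fl'(n_1, \dots, n_{r-1}), R)$ as the Chow-height-$0$ summand of $\h^{*,*}(A(n_1, \dots, n_{r-1}), R)$. Under the chosen ordering of basis elements (with $\beta_1, \dots, \beta_m$ generating the Chow-height-$0$ summand and $\beta_{m+1}, \dots, \beta_n$ generating the positive-Chow-height summands), this means $p^*$ sends $\beta_j \mapsto \beta_j$ for $j \leq m$, while $s_{\id}^*$ is the complementary projection sending $\beta_j \mapsto \beta_j$ for $j \leq m$ and $\beta_j \mapsto 0$ for $j > m$.

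Tensoring these maps with $\h^{*,*}(V'(n_{r-1}, n_r), R)$ via K\"unneth and then dualizing over $\m_R$ yields the evident inclusion of and projection onto the corresponding direct summand between $\widehat{\h^{*,*}(V'(n_{r-1},n_r) \times A(n_1, \dots, n_{r-1}), R)}$ and $\widehat{\h^{*,*}(V'(n_{r-1},n_r) \times \Fl'(n_1, \dots, n_{r-1}), R)}$, viewed as $\widehat{\h^{*,*}(\GL_{n_{r-1}}, R)}$-modules through the decompositions that appear in the proof of Lemma \ref{lem:E2}. Since $\Ext$ sends finite direct sums in the first variable to direct sums in the target, applying $\Ext^*_{\widehat{\h^{*,*}(\GL_{n_{r-1}}, R)}}(-, \m_R)$ to these maps gives the claimed descriptions of $S_2$ and $P_2$ summand by summand. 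The main point deserving care is the bookkeeping of directions through the dualization and contravariance of $\Ext$ in its first argument; once this is organized, no further computation is required, and I do not anticipate any significant obstacle beyond this bookkeeping.
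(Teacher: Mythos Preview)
Your proposal is correct and follows essentially the same approach as the paper: compute $(\id \times s_{\id})^*$ and $(\id \times p)^*$ on motivic cohomology as the obvious projection and inclusion with respect to the basis $\{\beta_j\}$, then trace this through the $\Ext$ construction of the $\E_2$-page from \Cref{lem:E2}. The paper's proof is simply a terser version of what you wrote, saying only that $(\id \times s_{\id})^*$ is the projection $\pr_{1,\dots,m}$ and that the result follows by ``tracing through the construction of the respective $\E_2$-pages.'' One small point: your assertion that $s_{\id}^*(\beta_j)=0$ for $j>m$ is not a formal consequence of $s_{\id}^*\circ p^*=\id$ alone; it uses that the basis $\{\beta_j\}$ was chosen (via \Cref{lem:IncTateSum}, as in the paragraph preceding the lemma) so that ${s_{\id}}_*$ is the inclusion of the Chow-height-$0$ Tate summands, whence $s_{\id}^*$ is the corresponding projection.
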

\begin{proof}
    We prove the lemma for $S_2$; the proof of the statement for $P_2$ is similar. The map $S_2$ is induced by the map $\id \times s_{\id}$ in \eqref{eq:idxs}, which in cohomology induces the projection
    \[
        (\id \times s_{\id})^*\co  \bigoplus_{i=1}^n \h^{*,*}(V(n_{r-1},n_r),R) \cdot \beta_j \xrar{\pr_{1,\dots,m}} \bigoplus_{i=1}^m \h^{*,*}(V(n_{r-1},n_r),R) \cdot \beta_j
    \]
    The result follows by tracing through the construction of the respective $\E_2$-pages in the proof of \Cref{lem:E2}.
\end{proof}

The next definition is borrowed from \cite{Wil:EquivariantMotivicCoho} and is useful when computing differentials.

\begin{defn}
    For a homogeneous class $\alpha \in \E_s^{l,p,q}$, the \textit{total Chow height} of $\alpha$ is the integer
    \[
		\tch \alpha := 2q-p-l.
    \]
\end{defn}

Note that the differentials decrease the total Chow height by one: 
\[
    \tch d_s(\alpha) = \tch \alpha - 1,
\] 
and for another homogeneous class $\beta \in \E_s^{l',p',q'}$,
\[
    \tch \alpha\beta = \tch \alpha + \tch \beta.
\]

\begin{prop}\label{prop:SSDescription}
    Assume that \Cref{lem:AAction} holds. In the spectral sequence $\{\E_s^{*,*,*}, d_s\}$ converging to $\h^{*,*}(A(n_1, \dots, n_r),R)$, the following hold:
    \renewcommand{\labelenumi}{(\roman{enumi})}
    \begin{enumerate}
        \item for each $i = n_r-n_{r-1}+1, \dots, n_r$, there is some $s$ such that $d_s(\alpha_i')$ is nontrivial, 
        \item for every $j = 1, \dots, n$ and every $s$, the element $d_s(\beta_j')$ is trivial.
    \end{enumerate}
\end{prop}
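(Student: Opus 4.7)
The plan is to handle (i) via a comparison with a smaller Rothenberg--Steenrod spectral sequence, and to handle (ii) via a total Chow height argument supplemented by a pullback argument through $F \co A(n_1, \dots, n_r) \to \GL_{n_r}$.

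For (i), the $\GL_{n_{r-1}}$-equivariant projection $V'(n_{r-1}, n_r) \times A(n_1, \dots, n_{r-1}) \to V'(n_{r-1}, n_r)$ is a map of principal $\GL_{n_{r-1}}$-bundles covering the base map $A(n_1, \dots, n_r) \to \Gr'(n_{r-1}, n_r)$. Let $\mathcal{E}$ denote the Rothenberg--Steenrod spectral sequence associated with $V'(n_{r-1}, n_r) \to \Gr'(n_{r-1}, n_r)$, whose $\E_2$-page is $\Lambda_{\m_R}(\alpha'_{n_r-n_{r-1}+1}, \dots, \alpha'_{n_r})[\theta_1, \dots, \theta_{n_{r-1}}]$, converging to $\h^{*,*}(\Gr(n_{r-1}, n_r), R)$. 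By \Cref{lem:GrPureTate} this abutment is concentrated in Chow height 0. Each $\alpha_i'$ has $\tch \alpha_i' = 1$ and sits in the column $l=0$, so it cannot be hit by any differential and must therefore support some nontrivial $d_s(\alpha_i') \neq 0$ in $\mathcal{E}$. Functoriality of the spectral sequence (\Cref{prop:RSSS}) produces a map $\mathcal{E} \to \E$ which on $\E_2$-pages embeds $\Lambda_{\m_R}(\alpha')[\theta]$ as the summand $\Lambda_{\m_R}(\alpha')[\theta] \cdot \beta'_1$ with $\beta'_1 = 1$, and naturality then transfers these nontrivial differentials to $\E$.

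For (ii), every monomial on the $\E_s$-page has nonnegative total Chow height: $\tch \alpha_i' = 1$, $\tch \theta_i = 0$, and $\tch \beta_j' \geq 0$ for every $j$ since every basis element of $\h^{*,*}(A(n_1, \dots, n_{r-1}), R)$ has nonnegative Chow height by the Tate sum decomposition of \Cref{prop:APureTate}. Since differentials decrease $\tch$ by one, any $\beta_j'$ with $\tch \beta_j' = 0$ (in particular, every $\beta_j'$ with $j \leq m$) automatically satisfies $d_s(\beta_j') = 0$. For $j > m$, factor $\beta_j = \rho_{i_1} \cdots \rho_{i_a} \cdot \gamma$ in the algebra $\h^{*,*}(A(n_1, \dots, n_{r-1}), R)$, where $\gamma$ is of Chow height 0 and each $\rho_{i_k}$ is a standard generator of the subalgebra $\h^{*,*}(\GL_{n_1}, R)$ afforded by \Cref{prop:APureTate}. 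Since the spectral sequence is multiplicative, the Leibniz rule together with induction on $a$ reduces the problem to showing $d_s(\beta_j') = 0$ for the basis element $\beta_j = \rho_i$.

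To handle these remaining cases, take the class $F^*\rho_i \in \h^{*,*}(A(n_1, \dots, n_r), R)$, which is automatically a permanent cycle of the spectral sequence. Using \Cref{lem:FibInc} and the Leray--Hirsch analysis in the proof of \Cref{prop:APureTate}, one checks that the pullback $\pi^*(F^*\rho_i)$ corresponds, under the K\"{u}nneth decomposition $\h^{*,*}(V'(n_{r-1}, n_r) \times A(n_1, \dots, n_{r-1}), R) = \h^{*,*}(V'(n_{r-1}, n_r), R) \otimes_{\m_R} \h^{*,*}(A(n_1, \dots, n_{r-1}), R)$, to the element $1 \otimes \rho_i$, which is precisely the $\E_2^{0, *, *}$ generator labelled $\beta_j'$ for $\beta_j = \rho_i$. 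Hence $\beta_j'$ survives all differentials. The principal obstacle is this last identification: one must verify that $\pi^*(F^*\rho_i)$ has no nontrivial K\"{u}nneth component of the form $\alpha_l \otimes (-)$, so that the edge map sends $F^*\rho_i$ precisely to $\beta_j'$ on $\E_\infty^{0, *, *}$ and not to some combination involving other generators. This reduces to a careful unpacking of the composition $F \circ \pi \co V'(n_{r-1}, n_r) \times A(n_1, \dots, n_{r-1}) \to \GL_{n_r}$ using the explicit formulas of \Cref{const:f,p}.
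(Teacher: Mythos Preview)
Your argument has genuine gaps in both parts, and the paper takes a different route that avoids them.

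For (i), the step ``naturality then transfers these nontrivial differentials to $\E$'' is not justified. Naturality gives $d_s^{\E}(\alpha_i') = \phi_s\bigl(d_s^{\mathcal E}(\alpha_i')\bigr)$, but you need $\phi_s$ to be injective on the relevant target term, not just $\phi_2$. Concretely, if $\tau := d_{s_0}^{\mathcal E}(\alpha_i')$ is the first nonzero differential in $\mathcal E$, you must rule out that $\phi_2(\tau)$ has already become a boundary in $\E$ by page $s_0$. Such a boundary could come from a class $\theta_J\beta_l'$ with $\tch\beta_l'=1$, and whether $d^{\E}(\beta_l')=0$ is precisely part (ii), which you have not yet established. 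The one-way comparison $\mathcal E\to\E$ has no retraction in general (there is no $\GL_{n_{r-1}}$-fixed point of $A(n_1,\dots,n_{r-1})$ once $r>2$), so injectivity on higher pages does not come for free. For (ii), you correctly flag the ``principal obstacle'' but do not resolve it: restricting $F\circ\pi$ to $\{pt\}\times A$ and $V'\times\{pt\}$ only pins down the $1\otimes(-)$ and $(-)\otimes1$ K\"unneth components, and there can be genuine cross-terms $\alpha_l\otimes\gamma$ in bidegree $(2i-1,i)$ whenever $n_r-n_{r-1}+1\le l< i\le n_1$. Your two gaps are thus intertwined and neither is closed.

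The paper sidesteps both issues by inserting an intermediate spectral sequence $\tilde{\E}$ converging to $\h^{*,*}(\Fl'(n_1,\dots,n_r),R)$. In $\tilde{\E}$ every $\beta_j'$ has $\tch=0$, so (ii) is automatic there, and (i) follows because the flag variety has no Chow-height-$1$ Tate summands. The crucial point is that the comparison between $\tilde{\E}$ and $\E$ is \emph{two-sided}: the maps $P_s$ and $S_s$ induced by $p$ and $s_{\id}$ split each other, which lets one identify the $l\ge 1$ columns on every page and conclude $d_s^{\E}(\alpha_i')=\tilde d_s(\alpha_i')$. Part (ii) for $\E$ is then obtained not by tracking permanent cycles individually but by a rank count: once the $\alpha_i'$ are gone, $\E_\infty^{0,*,*}$ must match $\h^{*,*}(A(n_1,\dots,n_{r-1}),R)$ in rank, forcing every $\beta_j'$ to survive.
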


\begin{proof}
    \textbf{Step 1: proof for $\{ \tilde{\E}{\vphantom{\E}}_s^{*,*,*}, \tilde{d}_s \}$.} The classes $\alpha_i'$ do not support any incoming differentials since $\tilde{\E}{\vphantom{\E}}_s^{l,p,q} = 0$ when $l < 0$. Since $\tch(d_s(\alpha_i')) = 0$, the only class that could possibly support $d_s(\alpha_i')$ is a sum of classes of the form
    \begin{equation}\label{eq:ThetaProd}
        c \theta_{i_1} \cdots \theta_{i_s}
    \end{equation}
    for $c \in \m_R^{0,0} = R$. If $\alpha_i'$ persisted to the $\E_\infty$-page, it would give rise to a free rank-1 summand of the $\m_R$-module $\h^{*,*}(\Fl(n_1, \dots, n_r),R)$ generated by an element in Chow height 1. No such generator exists by the Tate sum decomposition of \Cref{lem:FlPureTate}. We conclude that some differential $d_s$ takes $\alpha_i'$ to a sum of classes of the form \eqref{eq:ThetaProd}, establishing the first part. 
    
    In this case, the classes $\{\beta_j'\}$ all have total Chow height $0$. As Beilinson-Soul\'{e} vanishing holds for the pair $(k,R)$, there are no classes with negative total Chow height, so the differentials vanish on the classes $\beta_j'$.

    We note that the $\GL_{n_{r-1}}$-equivariant projection
    \begin{equation}\label{eq:pr1SSComp}
        \pr_1\co  V'(n_{r-1},n_r) \times \Fl'(n_1, \dots, n_{r-1}) \to V'(n_{r-1},n_r)
    \end{equation}
    induces a map from the spectral sequence associated with the $\GL_{n_{r-1}}$-action on $V'(n_{r-1},n_r)$, which converges to $\h^{*,*}(\Gr(n_{r-1},n_r),R)$, to $\{ \tilde{\E}{\vphantom{\E}}_s^{*,*,*}, \tilde{d}_s \}$. If we let $\bar{\theta}_i$ denote the image of $\theta_i$ in $\tilde{\E}{\vphantom{\E}}_\infty^{*,*,*}$, then the spectral sequence comparison induced by \eqref{eq:pr1SSComp} shows that the classes $\bar{\theta}_i$ and their products generate a free $\m_R$-submodule of $\h^{*,*}(\Fl(n_1, \dots, n_r),R)$ isomorphic to $\h^{*,*}(\Gr(n_{r-1},n_r),R)$. This fact will be used in Step 2.

    \textbf{Step 2: general case.} We compare with the case proved in Step 1. First, we show that the terms $\E_s^{l,p,q}$ and $\tilde{\E}{\vphantom{\E}}_s^{l,p,q}$ can be identified via $S_s$ and $P_s$ whenever $l \geq 1$. The identification $\E_2^{l,p,q} = \tilde{\E}{\vphantom{\E}}_2^{l,p,q}$ when $l \geq 1$ is clear. An element of this term is a sum of classes of the form
    \[
        c \theta_{i_1} \cdots \theta_{i_q}
    \]
    for some $c \in \m_R$. By the Leibniz rule, we see that $d_2\co  \E_2^{l,p,q} \to \E_2^{l+2,p-1,q}$ is trivial when $l \geq 1$, and similarly for $\tilde{d}_2$. Also, since $S_2\co  \E_2^{l,p,q} \to \tilde{\E}{\vphantom{\E}}_2^{l,p,q}$ is surjective for any $l$ by \Cref{lem:SSSplitting}, we must have $\im d_2 = \im \tilde{d}_2$. Consequently, on the succeeding page, $S_3\co  \E_3^{l,p,q} \to \tilde{\E}{\vphantom{\E}}_3^{l,p,q}$ and $P_3\co  \tilde{\E}{\vphantom{\E}}_3^{l,p,q} \to \E_3^{l,p,q}$ are mutually inverse isomorphisms provided $l \geq 1$. An inductive argument shows that $S_s$ and $P_s$ are mutually inverse isomorphisms on those terms with $l \geq 1$ for every $s$.  

    It follows from the splitting $S_2 \circ P_2 = \id$ of $\E_2$-pages established in \Cref{lem:SSSplitting} that $d_s(\alpha_i') = \tilde{d}_s(\alpha_i')$ for each $s$. A portion of the $\E_\infty$-page of the spectral sequence $\{\E_s^{*,*,*},d_s\}$ converging to $\h^{*,*}(A(n_1, \dots, n_r),R)$ is thus
    \[
    \begin{sseqpage}[classes = {draw = none}, xscale = 2, yscale = .8, no x ticks, no y ticks, x label = {$l$}, y label = {$q$}, y label style = {rotate = -90, yshift = -8em, xshift = 5em}, right clip padding = 0.4cm, y axis gap = 3em, lax degree]
	\begin{scope}[background, font = \tiny]
		\node at (0,1) {0};
		\node at (0,2) {1};
		\node at (0,3) {2};
		\node at (0,4) {\vdots};
		\node at (0,5) {i};
		\node at (0,6) {\vdots};
		
		\node at (1,0) {0};
		\node at (2,0) {1}; 
		\node at (3,0) {2};
		\node at (4,0) {\cdots};
	\end{scope}

        \class["\E_\infty^{0,*,1}"](1,2)
        \class["\E_\infty^{0,*,2}"](1,3)
	\class["\E_\infty^{0,*,i}"](1,5)
	
	\class["\bar{\theta}_1"](2,2)
	\class["\bar{\theta}_2"](2,3) 
	\class["\vdots"](2,4)
	
	\class["{\bar{\theta}_1}^2"](3,3)
	\class["\bar{\theta}_1\bar{\theta}_2 "](3,4)
	\class["\iddots"](4,4)
	\class["\iddots"](4,5)
	
	\class["1"](1,1)
	\class["0"](2,1)
	\class["0"](3,1)
	\class["0"](3,2) 
	\class["\cdots"](4,2)
	\end{sseqpage}
    \]
where $\bar{\theta}_i$ is the image of $\theta_i$ on the $\E_\infty$-page. As mentioned in Step 1, the classes $\bar{\theta}_i$ and their products give rise to a free submodule of $\h^{*,*}(A(n_1, \dots, n_r),R)$ isomorphic to $\h^{*,*}(\Gr(n_{r-1},n_r),R)$. \Cref{lem:FlPureTate} and \Cref{prop:APureTate} imply that there is a tensor-product decomposition
\[
    \h^{*,*}(A(n_1, \dots, n_r),R) = \h^{*,*}(\Gr(n_{r-1},n_r),R) \otimes_{\m_R} \h^{*,*}(A(n_1, \dots, n_{r-1}),R)
\]
as $\m_R$-modules. Counting ranks, we must have that the subalgebra $\E_\infty^{0,*,*}$ of $\E_\infty^{*,*,*}$ is a free $\m_R$-module isomorphic to $\h^{*,*}(A(n_1, \dots, n_{r-1}),R)$. We showed above that, for each $i$, the element $d_s(\alpha_i')$ is nontrivial for some $s$. The only possibility is that the classes $\beta_j'$ persist to the $\E_\infty$-page.
\end{proof}

We now focus on tying up the loose end: \Cref{lem:AAction}. The following is \cite{Wil:EquivariantMotivicCoho}*{Cor. 2.9.1}.

\begin{lem}\label{lem:InvMC}
    The inversion map $\imath\co \GL_n \to \GL_n$ induces the map in motivic cohomology defined by $\rho_i \mapsto -\rho_i$ for each $i=1, \dots, n$.
\end{lem}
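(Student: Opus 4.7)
The plan is to exploit the Hopf-algebraic structure on $\h^{*,*}(\GL_n, R)$ coming from the group scheme structure on $\GL_n$, specifically the antipode identity. By definition of the inverse morphism, the composite
\[
    \GL_n \xrar{\Delta} \GL_n \times \GL_n \xrar{\id \times \imath} \GL_n \times \GL_n \xrar{m} \GL_n
\]
coincides with $e \circ c$, where $c \co \GL_n \to \Spec k$ is the structure morphism and $e \co \Spec k \to \GL_n$ is the inclusion of the identity element.

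First I would apply motivic cohomology to this equality and evaluate on the generator $\rho_i$ using \Cref{lem:MultMC}. Since $\Delta^*$ is the cup product, this gives
\[
    \rho_i + \imath^*(\rho_i) \;=\; \Delta^* (\id \times \imath)^* m^*(\rho_i) \;=\; c^* e^*(\rho_i).
\]

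To finish, I would verify that $e^*(\rho_i) = 0$. This is the counit axiom in disguise: combining \Cref{lem:MultMC} with the identity $(\id \otimes e^*) \circ m^* = \id$ (after the canonical identification $\h^{*,*}(\GL_n, R) \otimes_{\m_R} \m_R \cong \h^{*,*}(\GL_n, R)$), we find $\rho_i + e^*(\rho_i) = \rho_i$, so $e^*(\rho_i) = 0$. Note that one cannot simply dismiss $e^*(\rho_i)$ on degree grounds since $\m_R^{2i-1, i}$ need not vanish (e.g.\ $\m_R^{1,1} = k^\times \otimes R$), so this Hopf-algebraic step is genuinely needed. Combined with the previous display, this forces $\imath^*(\rho_i) = -\rho_i$.

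I do not expect any real obstacle: the argument is a formal translation of the standard Hopf-algebra formula for the antipode in terms of the comultiplication and counit, and the only nonformal input (the coproduct on the generators) is already supplied by \Cref{lem:MultMC}.
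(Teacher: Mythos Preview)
Your argument is correct: it is the standard antipode computation in a Hopf algebra, and the only nontrivial ingredient (the comultiplication on the $\rho_i$) is exactly what \Cref{lem:MultMC} supplies; your handling of $e^*(\rho_i)$ via the counit identity is also sound and, as you note, necessary since $\m_R^{2i-1,i}$ need not vanish. The paper itself does not prove this lemma but simply cites \cite{Wil:EquivariantMotivicCoho}*{Cor.~2.9.1}, so there is no in-paper argument to compare against; your self-contained proof is precisely the kind of argument one would expect to find at that reference.
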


\begin{lem}\label{lem:AdActionMC}
    The adjoint action 
    \begin{align*}
        \GL_n \times \GL_n & \to \GL_n  \\
        (h,g) & \mapsto hgh^{-1}
    \end{align*}
    induces the map in motivic cohomology 
    \[
		\h^{*,*}(\GL_n) \to \h^{*,*}(\GL_n) \otimes_{\m_R} \h^{*,*}(\GL_n)
    \]
    defined by $\rho_i \mapsto 1 \otimes \rho_i$ for each $i=1, \dots, n$. 
\end{lem}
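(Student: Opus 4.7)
The plan is to realize the adjoint action as a composition of maps whose effect on motivic cohomology has already been recorded, namely the multiplication, the inversion, and various products of projections from $\GL_n \times \GL_n$. Specifically, I would factor the adjoint action as $m_3 \circ \phi$, where $\phi \co \GL_n \times \GL_n \to \GL_n \times \GL_n \times \GL_n$ is defined by $(h,g) \mapsto (h,g,h^{-1})$ and $m_3 \co \GL_n \times \GL_n \times \GL_n \to \GL_n$ is the iterated multiplication $(a,b,c) \mapsto abc$.

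Iterating \Cref{lem:MultMC} shows that
\[
    m_3^*(\rho_i) = \rho_i \otimes 1 \otimes 1 + 1 \otimes \rho_i \otimes 1 + 1 \otimes 1 \otimes \rho_i.
\]

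To analyze $\phi^*$, I would further factor $\phi = (\id \times \id \times \imath) \circ \tilde{\phi}$, where $\tilde{\phi} \co \GL_n \times \GL_n \to (\GL_n)^3$ sends $(h,g)$ to $(h,g,h)$. The map $\tilde{\phi}$ is the product of the projections $\pi_1, \pi_2, \pi_1$ from $\GL_n \times \GL_n$, so by the K\"{u}nneth formula $\tilde{\phi}^*(\alpha \otimes \beta \otimes \gamma) = (\alpha \otimes 1) \cdot (1 \otimes \beta) \cdot (\gamma \otimes 1)$. Combined with \Cref{lem:InvMC}, which gives $\imath^*(\rho_i) = -\rho_i$, this yields
\begin{align*}
    \phi^*(\rho_i \otimes 1 \otimes 1) &= \rho_i \otimes 1, \\
    \phi^*(1 \otimes \rho_i \otimes 1) &= 1 \otimes \rho_i, \\
    \phi^*(1 \otimes 1 \otimes \rho_i) &= -\rho_i \otimes 1.
\end{align*}

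Summing these contributions gives $\phi^* m_3^*(\rho_i) = 1 \otimes \rho_i$, as claimed. The computation is purely mechanical; the only place where care is required is tracking the tensor factors in $\tilde{\phi}^*$, since the third coordinate of $\tilde{\phi}$ reads off the first copy of $\GL_n$ in the source --- this is precisely what produces the cancellation between the contributions involving $h$ and $h^{-1}$. There is no substantive obstacle beyond bookkeeping.
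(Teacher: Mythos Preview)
Your proof is correct and follows essentially the same approach as the paper: both factor the adjoint action through the triple product $m_3$ on $\GL_n^3$ and a map $(h,g)\mapsto(h,g,h^{-1})$, then appeal to \Cref{lem:MultMC} and \Cref{lem:InvMC} to compute. The only cosmetic difference is that the paper reaches $(h,g,h^{-1})$ via $\Delta \times \id$ followed by $\id \times \imath \times \id$ and the transposition $\sigma_{23}$, whereas you use the projection triple $(\pi_1,\pi_2,\pi_1)$ followed by inversion on the last factor; the resulting cohomology chase and cancellation are identical.
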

\begin{proof}
    The action map in question factors as
    \begin{align*}
		\GL_n \times \GL_n \xrar{\Delta \times \id} &\GL_n\times \GL_n \times \GL_n \xrar{\id \times \imath \times \id}  \\
		&\GL_n \times \GL_n \times \GL_n \xrar{\sigma_{23}} \GL_n \times \GL_n \times \GL_n \xrar{m(m \times \id)} \GL_n
    \end{align*}
    where $\Delta$ is the diagonal and $\sigma_{23}$ is the map that swaps the second and third factors. The result now follows by chasing the induced map in cohomology and appealing to Lemmas \ref{lem:MultMC} and \ref{lem:InvMC}. 
\end{proof}

The action of $\GL_{n_{r-1}}$ on $A(n_1, \dots, n_{r-1})$ is induced by the equivariant map
\begin{equation}\label{eq:InducesAAction}
    a' \times \id \co (\GL_{n_{r-1}} \times V'(n_{r-2},n_{r-1})) \times A(n_1, \dots, n_{r-2}) \to V'(n_{r-2},n_{r-1}) \times A(n_1, \dots, n_{r-2})
\end{equation}
where $a'\co \GL_{n_{r-1}} \times V'(n_{r-2},n_{r-1}) \to V'(n_{r-2},n_{r-1})$ is the action given on points by $h.(q,s) = (qh,h^{-1}s)$. Note this is not the same action as the free action of $\GL_{n_{r-1}}$ on $V'(n_{r-1},n_r)$ discussed earlier in this section. 

\begin{lem}\label{lem:a'MC}
    The action $a'\co \GL_{n_{r-1}} \times V'(n_{r-2},n_{r-1}) \to V'(n_{r-2},n_{r-1})$ induces the coaction
    \begin{align*}
        {a'}^*\co \h^{*,*}(V'(n_{r-2},n_{r-1}),R) & \to \h^{*,*}(\GL_{n_{r-1}},R) \otimes_{\m_R} \h^{*,*}(V'(n_{r-2},n_{r-1},R) \\
        \alpha_i &\mapsto \rho_i \otimes 1 + 1 \otimes \alpha_i.
    \end{align*}
\end{lem}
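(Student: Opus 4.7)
The plan is to reduce the calculation to the corresponding right-multiplication action on the ordinary Stiefel variety $V(n_{r-2},n_{r-1})$, and then to relate that action to multiplication on $\GL_{n_{r-1}}$ via projection onto the first $n_{r-2}$ rows, so that \Cref{lem:MultMC} delivers the formula. For the reduction, the $\A^1$-weak equivalence $\pr_1 \co V'(n_{r-2},n_{r-1}) \to V(n_{r-2},n_{r-1})$ of \Cref{lem:VAffineBund} is $\GL_{n_{r-1}}$-equivariant when $V(n_{r-2},n_{r-1})$ is given the action $a''(h,q) := qh$, since $\pr_1(qh,h^{-1}s) = qh = a''(h,\pr_1(q,s))$. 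Because $\pr_1^*$ is an isomorphism on motivic cohomology identifying the generators $\alpha_i$ on either side, it suffices to establish $(a'')^*(\alpha_i) = \rho_i \otimes 1 + 1 \otimes \alpha_i$.

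To compute $(a'')^*$, I would exploit the commutative square
\[
    \begin{tikzcd}
        \GL_{n_{r-1}} \times \GL_{n_{r-1}} \rar["m \circ \tau"] \dar["\id \times \pr"'] & \GL_{n_{r-1}} \dar["\pr"] \\
        \GL_{n_{r-1}} \times V(n_{r-2},n_{r-1}) \rar["a''"'] & V(n_{r-2},n_{r-1})
    \end{tikzcd}
\]
where $\pr$ is projection onto the first $n_{r-2}$ rows (cf. \Cref{lem:ProjMC}) and $\tau$ swaps the two factors; the square commutes because row-projection intertwines right multiplication. \Cref{lem:MultMC} gives $m^*(\rho_i) = \rho_i \otimes 1 + 1 \otimes \rho_i$, and $\tau^*$ fixes this symmetric expression, so combined with $\pr^*(\alpha_i) = \rho_i$ one obtains
\[
    (\id \otimes \pr^*) \circ (a'')^*(\alpha_i) \;=\; \tau^* m^* \pr^*(\alpha_i) \;=\; \rho_i \otimes 1 + 1 \otimes \rho_i.
\]

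The proof concludes by noting that $\id \otimes \pr^*$ is injective: the monomial basis $\alpha_{i_1}\cdots\alpha_{i_l}$ of $\h^{*,*}(V(n_{r-2},n_{r-1}),R)$ supplied by \Cref{lem:VCoho} is sent to the linearly independent set $\rho_{i_1}\cdots\rho_{i_l}$ inside the standard basis of $\h^{*,*}(\GL_{n_{r-1}},R)$. Since $\rho_i \otimes 1 + 1 \otimes \alpha_i$ also maps to $\rho_i \otimes 1 + 1 \otimes \rho_i$ under $\id \otimes \pr^*$, injectivity forces the desired formula, which then transports back to $(a')^*$ via $\pr_1^*$. I do not expect a serious obstacle: the argument closely parallels that of \Cref{lem:SVActionMC}, the key twist being that the action here is by the \emph{larger} group $\GL_{n_{r-1}}$ on the right, which is why every generator $\alpha_i$ picks up a $\rho_i \otimes 1$ term without the case distinction that appeared in \Cref{lem:SVActionMC}.
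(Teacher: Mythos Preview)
Your argument is correct and follows essentially the same approach as the paper's proof: reduce to the action on $V(n_{r-2},n_{r-1})$ via the $\A^1$-equivalence $\pr_1$, then compare with multiplication on $\GL_{n_{r-1}}$ through the row-projection $\pr$ and invoke Lemmas~\ref{lem:MultMC} and~\ref{lem:ProjMC}. The paper's proof is very terse and omits the explicit commuting square and the injectivity-of-$\id \otimes \pr^*$ step you supply, but these are exactly the details one would fill in.
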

\begin{proof}
    It suffices to prove the result for the action of $\GL_{n_{r-1}}$ on the Stiefel variety $V(n_{r-2},n_{r-1})$ induced by the multiplication map $m\co \GL_{n_{r-1}} \times \GL_{n_{r-1}} \to \GL_{n_{r-1}}$. The result follows by comparison using Lemmas \ref{lem:MultMC} and \ref{lem:ProjMC}.
\end{proof}

If we endow $\GL_{n_{r-1}}$ with the trivial $\GL_{n_{r-2}}$-action, then \eqref{eq:InducesAAction} is $\GL_{n_{r-2}}$-equivariant. The diagram 
\[
    \begin{tikzcd}
        \GL_{n_{r-1}} \times V'(n_{r-2},n_{r-1}) \times A(n_1, \dots, n_{r-2}) \rar["a' \times \id"] \dar["\pi' = \id \times \pi"] & V'(n_{r-2},n_{r-1}) \times A(n_1, \dots, n_{r-2}) \dar["\pi"] \\
        \GL_{n_{r-1}} \times A(n_1, \dots, n_{r-1}) \rar["a"] & A(n_1, \dots, n_{r-1})
    \end{tikzcd}
\]
commutes, where the vertical maps are principal $\GL_{n_{r-2}}$-bundles. Let $\{\tensor*[^\prime]{\E}{_s^{*,*,*}}, \tensor*[^\prime]{d}{_s} \}$ denote the Rothenberg--Steenrod spectral sequence associated with $\pi'$. Then \eqref{eq:InducesAAction} induces a map of spectral sequences $F_s\co \E_s^{*,*,*} \to \tensor*[^\prime]{\E}{_s^{*,*,*}}$.

\begin{proof}[Proof of \Cref{lem:AAction}]
    The proof is by induction on $r$. The base case $r=2$ is \Cref{lem:AdActionMC}. Assume $r > 2$ and that the lemma holds for all sequence $(n_1', \dots, n_{r-1}')$ with $r' < r$. By hypothesis, we may assume that the $\E_2$-page presentation of \Cref{lem:E2} holds for the spectral sequence converging to $\h^{*,*}(A(n_1, \dots, n_{r-1}),R)$. The $\E_2$-page of the spectral sequence $\{ \tensor*[^\prime]{\E}{_s^{*,*,*}}, \tensor*[^\prime]{d}{_s} \}$ admits a presentation
    \[
        \tensor*[^\prime]{\E}{_2^{*,*,*}}= \bigoplus_{j=1}^n \Lambda_{\m_R}(\rho'_1, \dots, \rho'_{n_{r-1}}, \alpha'_{n_{r-1}-n_{r-2}+1}, \dots, \alpha'_{n_{r-1}})[\theta_1, \dots, \theta_{n_{r-2}}] \cdot \beta_j
    \]
    where the tridegrees of $\alpha'_i$ and $\theta_i$ are as in \Cref{lem:E2}, and the elements $\rho'_i$ correspond to algebra generators of $\h^{*,*}(\GL_{n_{r-1}},R)$ and $|\rho_i'| = (0,2i-1,i)$. It follows from \Cref{lem:a'MC} that $F_2(\alpha'_i) = \alpha'_i + \rho'_i$. We also have that $F_2(\beta_j) = \beta_j$ and $F_2(\theta_i) = \theta_i$. The elements $\alpha_i'$ do not persist to the $\E_\infty$-page by \Cref{prop:SSDescription}, and $d_s(\rho_i')=0$ for each $s$. The map $F_\infty$ is thus defined on algebra generators by $F_\infty(\beta_j) = \beta_j$ and $F_\infty(\overline{\theta}_i) = \overline{\theta_i}$. Since products of the elements $\beta_j$ and $\overline{\theta}_i$ give rise to generators of the free $\m_R$-module $\h^{*,*}(A(n_1, \dots, n_{r-1}),R)$, we have established the result. 
\end{proof}

\section{Quotients of \texorpdfstring{$A(n_1, \dots, n_r)$}{A(n1,...,nr)}}\label{sec:AuxSpaces}

For $0 \leq m \leq n_1$, we inductively define a family motivic spaces $X_m(n_1, \dots, n_r)$ as certain homotopy quotients of the schemes $A(n_1, \dots, n_r)$. The constructions of this section are in analogy with Miller's construction of $F_m(n)/F_{m-1}(n)$ as a certain quotient of the space $A(m,n)^u$, as discussed in the introduction. 

First, let $X_0(n_1, \dots, n_r) = A(n_1, \dots, n_r)$. We define the pointed motivic space $X_1(n_1, \dots, n_r)$ to be the $\A^1$-homotopy cofibre of the identity section
\[
    f^1 (0,n_1, \dots, n_r)\co  X_0(0,n_1, \dots, n_r) = \Fl'(n_1, \dots, n_r) \to A(n_1, \dots, n_r) = X_0(n_1, \dots, n_r)
\]
of the map $p$. For any sequence $n_1 < \cdots < n_l < \cdots < n_r$, there is a commuting diagram that encodes all maps
\[
    A(n_1, \dots, n_l, \dots, n_r)  \to A(n_l, \dots, n_r)
\]
obtained by composing the maps $f^j(n_1', \dots, n_r')$ of Construction \ref{const:f,p}. In particular, the square
\[
    \begin{tikzcd}[column sep = 8em]
        X_0(0,n_1, \dots, n_r) \rar["{f^{j+1}(0,n_1, \dots, n_r)}"] \dar["{f^1 (0,n_1, \dots, n_r)}"] & X_0(0,n_1, \dots, \underline{n}_j, \dots, n_r) \dar["{f^1 (0,n_1, \dots, \underline{n}_j, \dots, n_r)}"] \\
        X_0(n_1, \dots, n_r) \rar["{f^j (n_1, \dots, n_r)}"] & X_0(n_1, \dots, \underline{n}_j, \dots, n_r)
    \end{tikzcd}
\]
commutes (recall the nonstandard notion for omitted indices). Taking cofibres of the vertical maps, we have an induced map 
\[
    X_1(n_1, \dots, n_r) \to X_1(n_1, \dots, \underline{n}_j, \dots, n_r) 
\]
which we shall denote $f_1^j(n_1, \dots, n_r)$. The maps $f^j(n_1', \dots, n_r')$ induce a homotopy commuting diagram that encodes all maps
\[
    X_1(n_1, \dots,n_l, \dots, n_r) \to X_1(n_l, \dots, n_r)
\]
obtained by composing the induced maps $f_1^j (n_1', \dots, n_r')$. 

Continuing in this way, suppose we have constructed $X_{m'}(n_1', \dots, n_r')$ for all $m' < m$ and all finite sequences $n_1' < \cdots < n_r'$ with $m' \leq n_1'$, as well as maps
\[
    f_{m'}^j(n_1', \dots, n_r')\co  X_{m'}(n_1', \dots, n_r') \to X_{m'}(n_1', \dots, \underline{n}_j', \dots, n_r').
\]
Suppose further that there are homotopy commuting diagrams that encode all composites
\[
    X_{m'}(n_1', \dots,n_l', \dots, n_r') \to X_{m'}(n_l', \dots, n_r')
\]
of the maps $f_{m'}^j(n_1', \dots, n_r')$. Lastly, suppose that each $f_{m'}^j(n_1', \dots, n_r')$ is the induced map of homotopy cofibres after taking cofibres of the vertical maps in the diagram
\[
    \begin{tikzcd}[column sep = 8em, row sep = large]
        X_{m'-1}(m'-1,n_1', \dots, n_r') \rar["{f_{m'-1}^{j+1}(m'-1,n_1', \dots, n_r')}"] \dar["{f_{m'-1}^1(m'-1,n_1', \dots, n_r')}"] & X_{m'-1}(m'-1,n_1', \dots, \underline{n}_j', \dots, n_r')] \dar["{f_{m'-1}^1(m'-1,n_1', \dots, \underline{n}_j', \dots, n_r')}"] \\
        X_{m'-1}(n_1', \dots, n_r') \rar["{f_{m'-1}^j(n_1', \dots, n_r')}"] & X_{m'-1}(n_1', \dots, \underline{n}_j', \dots, n_r')
    \end{tikzcd}
\]
We define $X_m(n_1, \dots, n_r)$ to be the $\A^1$-homotopy cofibre of
\[  
    X_{m-1}(m-1,n_1, \dots, n_l) \xrar{f_{m-1}^1(m-1,n_1, \dots, n_l)} X_{m-1}(n_1, \dots, n_l).
\]
The diagram 
\[
    \begin{tikzcd}[column sep = 8em, row sep = large]
        X_{m-1}(m-1,n_1, \dots, n_r) \rar["{f_{m-1}^{j+1}(m-1,n_1, \dots, n_r)}"] \dar["{f_{m-1}^1(m-1,n_1, \dots, n_r)}"] & X_{m-1}(m-1,n_1, \dots, \underline{n}_j, \dots, n_r)] \dar["{f_{m-1}^1(m-1,n_1, \dots, \underline{n}_j, \dots, n_r)}"] \\
        X_{m-1}(n_1, \dots, n_r) \rar["{f_{m-1}^j(n_1, \dots, n_r)}"] & X_{m-1}(n_1, \dots, \underline{n}_j, \dots, n_r)
    \end{tikzcd}
\]
commutes. Taking cofibres vertically, we define 
\[
    f_m^j(n_1, \dots, n_r)\co X_m(n_1, \dots, n_r) \to X_m(n_1, \dots, \underline{n}_j, \dots, n_r)  
\]
to be the induced map on cofibres.

\section{Complex realization and motivic cohomology}\label{sec:CxReal}

When $k$ admits a complex embedding, there is a complex realization functor
\[
    Re_\C \co  \cat{Spc}(k) \to \text{Top}
\]
which is left Quillen (where $\text{Top}$ is endowed with the Quillen model structure) and sends a $k$-scheme $Y$ to its associated analytic space $Y(\C)$. We refer to \cite{DI:TopologicalHypercoversRealization} for a detailed treatment. There is a stable version of $Re_\C$ at the level of homotopy categories, which we denote by
\[
    SRe_\C\co  \cat{SH}(k) \to \cat{SH},
\]
constructed in \cite{PPR:VoevodskysKTheorySpectrum}. There, the authors construct $SRe_\C$ at the model category level, but we will not need this generality. 

The following proposition motivates the construction of the motivic spaces $X_m(n_1, \dots, n_r)$. 

\begin{prop}\label{prop:XmRealization}
    If $k$ admits a complex embedding and $m \geq 1$, the spaces $Re_\C(X_m(m,n))$ and $Re_\C(X_m(n))$ have the homotopy type of $F_m(n)/F_{m-1}(n)$ and $U(n)/F_{m-1}(n)$, respectively. The complex realization of $f_m^1(m,n) \co X_m(m,n) \to X_m(n)$ is, up to homotopy, the canonical inclusion
    $F_m(n)/F_{m-1}(n) \to U(n)/F_{m-1}(n)$.
\end{prop}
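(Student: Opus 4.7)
The plan is to induct on $m$, exploiting that $Re_\C$ is a left Quillen functor (\cite{DI:TopologicalHypercoversRealization}) and therefore preserves homotopy cofibres. The key is to identify the realization of each piece appearing in the inductive construction of $X_m$ with a suitable topological object built from Miller's filtration.

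First I would identify the realizations of the algebraic building blocks. By \Cref{cor:FlAffBund}, $Re_\C(\Fl'(n_1,\dots,n_r)) \simeq \Fl(n_1,\dots,n_r)(\C)$, since $Re_\C$ sends affine-space bundles to topological bundles with contractible fibres. By \Cref{rem:AutBundle}, $A(n_1,\dots,n_r)$ is the automorphism bundle of the tautological rank-$n_1$ bundle over $\Fl'(n_1,\dots,n_r)$, so its realization is a bundle over $\Fl(n_1,\dots,n_r)(\C)$ with fibre $\GL_{n_1}(\C)$. The conjugation-equivariant polar-decomposition retract $\GL_{n_1}(\C) \to \U(n_1)$ extends to a homotopy equivalence from $Re_\C(A(n_1,\dots,n_r))$ to the associated bundle $A^u(n_1,\dots,n_r)$ with fibre $\U(n_1)$; for $(n_1,n_2)=(m,n)$ this recovers Miller's space $A(m,n)^u$. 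Moreover, $Re_\C$ of the structure maps $f^j$ from \Cref{const:f,p} and of the identity section $s_{\id}$ realize to their evident topological counterparts.

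Let $F^u_j(n_1,\dots,n_r) \subseteq A^u(n_1,\dots,n_r)$ denote the preimage of Miller's filtration $F_j(n_1) \subseteq \U(n_1)$ under the adjoint-bundle projection to $\Fl(n_1,\dots,n_r)(\C)$. The main inductive claim I would establish is
\[
    Re_\C(X_j(n_1,\dots,n_r)) \simeq A^u(n_1,\dots,n_r)/F^u_{j-1}(n_1,\dots,n_r).
\]
The case $j=1$ is immediate from the identifications above, since $F^u_0(n_1,\dots,n_r)$ is exactly the image of $s_{\id}$. For $j>1$, apply $Re_\C$ to the defining cofibre sequence for $X_j(n_1,\dots,n_r)$ and invoke the inductive hypothesis; the task then reduces to showing that the topological map
\[
    A^u(j-1,n_1,\dots,n_r)/F^u_{j-2} \longrightarrow A^u(n_1,\dots,n_r)/F^u_{j-2}(n_1,\dots,n_r)
\]
factors through a weak equivalence onto the subquotient $F^u_{j-1}(n_1,\dots,n_r)/F^u_{j-2}(n_1,\dots,n_r)$, whose cofibre is then $A^u(n_1,\dots,n_r)/F^u_{j-1}(n_1,\dots,n_r)$ as required. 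This is an iterated analog of Miller's homeomorphism $A(m,n)^u/F_{m-1}(m,n) \cong F_m(n)/F_{m-1}(n)$; I would prove it by applying Miller's argument fibrewise along the associated-bundle projection to $\Fl(n_2,\dots,n_r)(\C)$, using that both sides are built from the corresponding Miller data for $\U(n_1)$ by the same $V(n_1,n_r)^u \times_{\U(n_1)} -$ construction.

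Specializing the main claim to $r=1$ gives $Re_\C(X_m(n)) \simeq \U(n)/F_{m-1}(n)$, and to $(n_1,n_2)=(m,n)$ gives $Re_\C(X_m(m,n)) \simeq A^u(m,n)/F^u_{m-1}(m,n) \cong F_m(n)/F_{m-1}(n)$ by Miller's theorem. The compatibility of $f^1_m(m,n)$ with the canonical inclusion $F_m(n)/F_{m-1}(n) \hookrightarrow \U(n)/F_{m-1}(n)$ follows by naturality applied to the realized map $A^u(m,n) \to \U(n)$, whose image is $F_m(n)$. The main obstacle will be carefully establishing the iterated Miller-type homeomorphism in the bundle setting, along with the model-categorical bookkeeping needed to ensure that the relevant closed immersions realize to honest cofibrations so that $Re_\C$ commutes with the corresponding quotients.
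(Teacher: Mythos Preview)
Your proposal is correct and follows essentially the same route as the paper: induction on $m$, the fact that $Re_\C$ preserves homotopy cofibres, and Miller's homeomorphisms $A(m,n)^u/F_{m-1}(m,n) \cong F_m(n)/F_{m-1}(n)$. The paper's proof is in fact only a two-line sketch invoking exactly these three ingredients and declaring the rest ``standard''; you have supplied the details (the identification of $Re_\C(A(n_1,\dots,n_r))$ with the unitary associated bundle, the general inductive statement for $X_j(n_1,\dots,n_r)$, and the fibrewise version of Miller's argument) that the paper omits.
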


\begin{proof}
    We omit many of the details, all of which are standard. The statement follows from an induction argument using the fact that $Re_\C$ preserves homotopy cofibres and that there are homeomorphisms 
    \[
        A(m,n)^u/F_{m-1}(m,n) \to F_m(n)/F_{m-1}(n).
    \] 
\end{proof}

Let $HR_{\mot}$ denote the motivic Eilenberg-MacLane spectrum in $\cat{SH}(k)$ associated with $R$ and $HR$ the usual Eilenberg-MacLane spectrum in $\cat{SH}$. 

\begin{lem}
    The complex realization of $HR_{\mot}$ is weakly equivalent to $HR$. 
\end{lem}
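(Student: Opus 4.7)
The plan is to use an explicit model of $HR_{\mot}$ whose complex realization is manifestly an Eilenberg--MacLane spectrum, then invoke the classical Dold--Thom theorem. Concretely, I would work with Voevodsky's model of $HR_{\mot}$ as the $T$-spectrum (where $T := \A^1/\G_m$) whose $n$-th space is the pointed motivic space underlying the Nisnevich sheaf of $R$-modules with transfers freely generated by $T^{\wedge n}$, with bonding maps adjoint to the canonical unit map.

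By \cite{DI:TopologicalHypercoversRealization}, the unstable realization $Re_\C$ is strong symmetric monoidal and sends $T$ to $T(\C) \simeq S^2$; hence the stable realization $SRe_\C$ of \cite{PPR:VoevodskysKTheorySpectrum} carries $T$-spectra to $S^2$-spectra, which after reindexing are ordinary spectra. The crucial input is to identify the complex realization of the free sheaf of $R$-modules with transfers on a pointed smooth $k$-variety $Y$ with the topological object $R \otimes_\Z SP^\infty(Y(\C))$, where $SP^\infty$ denotes the infinite symmetric product. This can be extracted from the Suslin--Voevodsky comparison theorem, which relates the singular complex of the sheaf of finite correspondences on $Y$ to the singular $R$-chains on $Y(\C)$.

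Applying this identification to $Y = T^{\wedge n}$ and invoking the classical Dold--Thom theorem yields a weak equivalence between $Re_\C$ of the $n$-th space of $HR_{\mot}$ and $K(R, 2n)$. A straightforward naturality check shows that these assemble into the standard $\Omega$-spectrum structure on $\{K(R, 2n)\}_n$, so $SRe_\C(HR_{\mot}) \simeq HR$ in $\cat{SH}$. The main obstacle is the second step: although the slogan ``free sheaf with transfers realizes to infinite symmetric product'' is morally Dold--Thom, its careful formulation requires matching the sheafification of algebraic correspondences on the motivic side with the topological symmetric product construction, and also matching the two different natural topologies on the resulting abelian monoids. I would handle this by citing the Suslin--Voevodsky rigidity and comparison results rather than reproving them from scratch.
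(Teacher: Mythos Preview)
Your approach is valid in outline but takes a genuinely different route from the paper. The paper's argument is a two-line reduction: for $R=\Z$ it simply cites \cite{Lev:ComparisonMotivicAndClassical}*{Prop.~5.6}; for general $R$ it chooses a two-term free resolution $0 \to \bigoplus_I \Z \to \bigoplus_J \Z \to R \to 0$, applies $H(-)_{\mot}$ to obtain a distinguished triangle $\bigvee_I H\Z_{\mot} \to \bigvee_J H\Z_{\mot} \to HR_{\mot}$ in $\cat{SH}(k)$, and concludes because $SRe_\C$ is triangulated and the $\Z$ case is already known. You instead propose to treat all $R$ uniformly by working with Voevodsky's explicit symmetric-$T$-spectrum model and invoking Dold--Thom after realization. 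That is essentially the strategy behind the cited result of Levine for $R=\Z$, so in effect you are reproving (and mildly generalizing) the black-box input rather than leveraging it. The paper's route is much shorter and isolates precisely the new content---the passage from $\Z$ to arbitrary $R$ via the triangulated structure---while your route is more self-contained and conceptually transparent, at the cost of the nontrivial identification of the realization of $R_{tr}(T^{\wedge n})$ with the Dold--Thom model that you correctly flag as the main obstacle. Both are acceptable; if you want the shortest writeup, adopt the paper's reduction and keep your Dold--Thom discussion as a remark on how the $\Z$ case itself is established.
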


\begin{proof}
    When $R=\Z$, this is \cite{Lev:ComparisonMotivicAndClassical}*{Prop. 5.6}. Otherwise, the ring $R$ admits a resolution by free abelian groups of the form
    \[
        0 \to \bigoplus_I \Z \to \bigoplus_J \Z \to R \to 0.
    \]
    Application of the functor $H(-)_{\mot}$ yields a distinguished triangle
    \[
        \bigvee_I H\Z_{\mot} \to \bigvee_J H\Z_{\mot} \to HR_{\mot} \xrar{\Sigma^{2,1}}
    \]
    in $\cat{SH}(k)$. Then $SRe_\C(HR_{\mot}) \simeq HR$ as $SRe_\C$ is triangulated.
\end{proof}

For each object $Y$ of $\cat{Spc}(k)$, the stable realization map $SRe_\C$ induces homomorphism
\begin{align*}
    \h^{p,q}(Y,R) = [Y_+,\Sigma^{p,q} HR_{\mot}]_{\cat{SH}(k)} \xrar{SRe_\C} [Re_\C(Y)_+,\Sigma^p HR]_{\cat{SH}} = \h^p(Re_\C(Y),R)
\end{align*}
which we denote by $Re_{\C,Y}$. The map $Re_{\C,Y}$ is natural in $Y$. If $(Y,y)$ is a pointed motivic space, there is also a reduced version which we denote $\tilde{Re}_{\C,Y}$. 
\begin{rem}\label{rem:ReRingHom}
    Since $SRe_\C$ is symmetric monoidal (\cite{PPR:VoevodskysKTheorySpectrum}*{Thm. A.45}), the map
    \[
        Re_{\C,Y}\co  \h^{*,*}(Y,R) \to \h^*(Re_\C(Y),R)
    \]
    is a graded ring homomorphism, where the $i$th graded piece of $\h^{*,*}(Y,R)$ is $\h^{i,*}(Y,R)$.
\end{rem}

The next lemma guarantees that the $\m_R$-algebra generators $\rho_i \in \h^{2i-1,i}(\GL_n,R)$ behave as expected under complex realization. 

\begin{lem}\label{lem:ReRho}
    There is a presentation
    \[
        \h^*(\U(n),R) = \Lambda_R(\rho_1^u, \dots, \rho_n^u)
    \]
    such that $Re_{\C,\GL_n}(\rho_i) = \rho_i^u$ for each $i$. 
\end{lem}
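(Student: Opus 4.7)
The plan is to proceed by induction on $n$, using the cofibre sequence of \Cref{lem:in-1Hocofib} and its complex realization. For the base case $n=1$, $\GL_1 = \G_m$ realizes to $\C^\times \simeq S^1 = \U(1)$; the free rank-one $R$-summand $R \cdot \rho_1 \subseteq \h^{1,1}(\G_m, R)$ realizes isomorphically onto $\h^1(S^1, R) \cong R$ (the complementary summand $\m_R^{1,1}$ is killed, since $\m_R^{1,1} \to \h^1(\mathrm{pt}, R) = 0$). Set $\rho_1^u := Re_{\C, \G_m}(\rho_1)$; the relation $(\rho_1^u)^2 = 0$ is the realization of the motivic relation $\rho_1^2 = \{-1\} \rho_1$ using $Re_\C(\{-1\}) = 0$.

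For the inductive step, assume the result for $n-1$. Applying $SRe_\C$ to the cofibre sequence of \Cref{lem:in-1Hocofib}, and using that $SRe_\C$ is symmetric monoidal with $SRe_\C(\G_m) \simeq S^1$, the term $\Sigma^{2n-1, n}(\GL_{n-1})_+$ realizes to $\Sigma^{2n-1}(\U(n-1)_+)$, yielding a stable cofibre sequence $\U(n-1)_+ \to \U(n)_+ \to \Sigma^{2n-1}(\U(n-1)_+)$, which is (up to weak equivalence) the Puppe sequence of the standard inclusion $\iota_{n-1} \co \U(n-1) \hookrightarrow \U(n)$. Naturality of $Re_\C$ gives a comparison of the corresponding long exact sequences in cohomology. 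For $i < n$, the motivic boundary terms $\h^{p,q}(\Sigma^{2n-1,n}(\GL_{n-1})_+) = \h^{p-2n+1, q-n}(\GL_{n-1}, R)$ vanish in the bidegrees relevant to $\h^{2i-1, i}$ (since the weight $i - n < 0$), so $\imath_{n-1}^*$ is an isomorphism on $\h^{2i-1, i}$. Choose $\rho_i \in \h^{2i-1, i}(\GL_n, R)$ restricting to the previously fixed $\rho_i$ in $\h^{2i-1, i}(\GL_{n-1}, R)$, and set $\rho_i^u := Re_{\C, \GL_n}(\rho_i)$; by naturality and the inductive hypothesis, $\iota_{n-1}^*(\rho_i^u) = \rho_i^u \in \h^{2i-1}(\U(n-1), R)$.

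For $i = n$ (with $n \geq 2$), a direct bidegree computation using \Cref{lem:VCoho} gives $\h^{2n-1, n}(\GL_n, R) = R \cdot \rho_n$ freely of rank one, and the motivic LES identifies this with the top-cell contribution $\h^{0,0}(\GL_{n-1}, R) = R$; the analogous topological LES identifies $\h^0(\U(n-1), R) = R$ with the rank-one $R$-submodule of $\h^{2n-1}(\U(n), R)$ killed by $\iota_{n-1}^*$, which is the standard generator $\rho_n^u$. Since $Re_{\C, \Spec k}$ acts as the identity on $R = \h^{0,0}$, naturality forces $\rho_n^u := Re_{\C, \GL_n}(\rho_n)$ to be this generator, and $\iota_{n-1}^*(\rho_n^u) = Re_\C(\imath_{n-1}^*(\rho_n)) = 0$. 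The (split) short exact sequence of $R$-modules $0 \to \rho_n^u \cdot \h^*(\U(n-1), R) \to \h^*(\U(n), R) \to \h^*(\U(n-1), R) \to 0$, combined with the inductive hypothesis and the relations $(\rho_i^u)^2 = 0$ (each realizing from \Cref{lem:VCoho} via $Re_\C(\{-1\}) = 0$), yields $\h^*(\U(n), R) = \Lambda_R(\rho_1^u, \dots, \rho_n^u)$. The main technical obstacle is the bookkeeping needed to match the motivic cofibre sequence's realization with the topological Puppe sequence and to verify that the top-cell connecting map identifies $Re_{\C, \GL_n}(\rho_n)$ with the standard generator $\rho_n^u$; this is a tracking-of-suspensions argument relying on $SRe_\C(\G_m) \simeq S^1$.
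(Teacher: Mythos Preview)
Your inductive approach via the cofibre sequence of \Cref{lem:in-1Hocofib} is genuinely different from the paper's. The paper instead traces each $\rho_i$ back through the stabilization $\GL_i \to \GL_n$ and the last-column projection $\GL_i \to \A^i \setminus 0$ to a preferred generator of $\tilde{\h}^{2i-1,i}(\A^i \setminus 0)$, then reduces (via simplicial suspension to $\p^i/\p^{i-1}$, the long exact sequence of the pair $(\p^i,\p^{i-1})$, and the fact that $Re_\C$ is a ring map) to showing that $Re_{\C,\p^i}\co \h^{2,1}(\p^i) \to \h^2(\CP^i)$ is an isomorphism, which follows because $\p^\infty$ models $K(\Z,2,1)$. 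Your route is more structural and reuses machinery already set up in the paper; the paper's route has the virtue of isolating the one fundamental comparison (the Tate twist realizes correctly) in a completely concrete form.

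Your base case, however, has a gap: you assert that the summand $R \cdot \rho_1 \subseteq \h^{1,1}(\G_m,R)$ realizes isomorphically onto $\h^1(S^1,R)$, but you only argue that the complementary summand $\m_R^{1,1}$ is killed. You never say why $Re_{\C,\G_m}(\rho_1)$ is nonzero, and this is not automatic---it is exactly the statement that $SRe_\C$ carries the Tate sphere $S^{1,1}$ to $S^1$ compatibly with units, which is precisely what the paper's $\p^\infty$ argument establishes (the case $i=1$ there is your base case after one suspension). You correctly flag this as the ``main technical obstacle'' for the $i=n$ step, but the same obstacle is already present at $n=1$, and your induction does not start without resolving it. The fix---either invoke symmetric monoidality of $SRe_\C$ explicitly to argue that the unit class in $\tilde{\h}^{1,1}(S^{1,1}) \cong R$ maps to the unit class in $\tilde{\h}^1(S^1) \cong R$, or run the paper's $\p^1$ computation---should be supplied up front rather than deferred.
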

\begin{proof}
    It suffices to establish the result for $R=\Z$. The element $\rho_i \in \h^{2i-1,i}(\GL_n)$ is inductively defined as the unique element mapping to $\rho_i \in \h^{2i-1,i}(\GL_i)$ under the stabilization map $\GL_i \to \GL_n$. And $\rho_i \in \h^{2i-1,i}(\GL_i)$ is the image of a preferred generator in $\h^{2i-1,i}(\A^i \setminus 0)$ under the map induced by projection to the last column $\GL_i \to \A^i \setminus 0$. The element $\rho_i^u \in \h^{2i-1}(\U(n))$ is defined (up to a sign) similarly. It therefore suffices to show that 
    \[
        \tilde{Re}_{\C,\A^i \setminus \overline{0}}\co  \tilde{\h}{\vphantom{\h}}^{2i-1,i}(\A^i \setminus 0) \to \tilde{\h}{\vphantom{\h}}^{2i-1}(S^{2i-1})
    \]
    is an isomorphism, where $\A^i \setminus 0$ is pointed at the closed point $(1,0, \dots, 0)$. The homomorphisms $\tilde{Re}_{\C,-}$ are compatible with simplicial suspension, so it suffices to show
    \[
        \tilde{Re}_{\C,\p^i/\p^{i-1}}\co  \tilde{\h}{\vphantom{\h}}^{2i,i}(\p^i/\p^{i-1}) \to \tilde{\h}{\vphantom{\h}}^{2i}(\CP^i/\CP^{i-1})
    \]
    is an isomorphism. In this bidegree, the reduced and unreduced cohomology groups agree. Using the long exact sequence of a pair and the fact that $Re_{\C,\p^i}$ is a graded ring homomorphism, we reduce to showing
    \[
        Re_{\C,\p^i}\co  \h^{2,1}(\p^i) \to \h^2(\CP^i)
    \]
    is an isomorphism. This follows from the fact that $\p^\infty$ is a model for the motivic Eilenberg-MacLane space $K(\Z,2,1)$, and the natural map $\p^i \to \p^\infty$ corresponds to a generator of $\h^{2,1}(\p^i)$.
\end{proof}

\section{The splitting in characteristic zero}\label{sec:Char0Splitting}

In this section, we prove the splitting in characteristic zero. The strategy is first to establish the splitting over the prime field $\Q$ and then base change to prove the splitting over an arbitrary field of characteristic 0.

We say that two pure Tate motives $\bigoplus_i R(q_i)[p_i]$ and $\bigoplus_j R(q_j)[p_j]$ \textit{have the same Tate summands in Chow height $m$} if there is an equality $\{(p_i,q_i) \mid 2q_i-p_i=m\} = \{(p_j,q_j) \mid 2q_j-p_j = m\}$ of subsets of $\Z^2$ counted with multiplicities. 

\begin{lem}\label{lem:Bijection}
    For $0 \leq m < n_1$, the pure Tate motives $M(A(m,n_1, \dots, n_r))$ and $M(A(n_1, \dots, n_r))$ have the same Tate summands in Chow height $m$. 
\end{lem}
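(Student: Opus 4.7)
The plan is to reduce the claim to a standard combinatorial bijection by isolating the Chow height $m$ contributions on each side. By Proposition \ref{prop:APureTate}, there are isomorphisms
\[
    M(A(m,n_1,\dots,n_r)) \cong M(\GL_m) \otimes M(\Fl(m,n_1,\dots,n_r)), \qquad
    M(A(n_1,\dots,n_r)) \cong M(\GL_{n_1}) \otimes M(\Fl(n_1,\dots,n_r)).
\]
Viewing $\Fl(m,n_1,\dots,n_r)$ as a Grassmannian $m$-plane bundle over $\Fl(n_1,\dots,n_r)$, the argument in the proof of Lemma \ref{lem:FlPureTate} (via Proposition \ref{prop:MotivicLH}) yields $M(\Fl(m,n_1,\dots,n_r)) \cong M(\Gr(m,n_1)) \otimes M(\Fl(n_1,\dots,n_r))$.

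Next, I would exploit the fact that Chow heights add under tensor product of Tate motives. By Lemma \ref{lem:GrPureTate} and iterated application of Lemma \ref{lem:FlPureTate}, the motives $M(\Gr(m,n_1))$ and $M(\Fl(n_1,\dots,n_r))$ are pure Tate with all summands concentrated in Chow height $0$. Hence the Chow height $m$ summands of $M(A(m,n_1,\dots,n_r))$ arise precisely from the Chow height $m$ summands of $M(\GL_m)$ tensored with $M(\Gr(m,n_1)) \otimes M(\Fl(n_1,\dots,n_r))$. By Proposition \ref{prop:GLnPureTate}, the only such summand of $M(\GL_m)$ is $R(d(1,\dots,m))$ (corresponding to the unique $m$-element subset of $\{1,\dots,m\}$). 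Similarly, the Chow height $m$ summands of $M(A(n_1,\dots,n_r))$ are $\bigoplus_{1 \leq i_1 < \dots < i_m \leq n_1} R(d(i_1,\dots,i_m)) \otimes M(\Fl(n_1,\dots,n_r))$.

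The claim is therefore reduced to the identity
\[
    R(d(1,\dots,m)) \otimes M(\Gr(m,n_1)) \;\cong\; \bigoplus_{1 \leq i_1 < \dots < i_m \leq n_1} R(d(i_1,\dots,i_m))
\]
as graded sums of Tate motives. This I would establish via the classical bijection sending a partition $\bm{\lambda} = (\lambda_1 \leq \dots \leq \lambda_m)$ with $\lambda_m \leq n_1 - m$ (indexing the Schubert summands of $M(\Gr(m,n_1))$ per Lemma \ref{lem:GrPureTate}) to the strictly increasing tuple $(i_1,\dots,i_m)$ defined by $i_j := j + \lambda_j$. A direct bidegree computation—using that $R(d(1,\dots,m)) = R(m(m+1)/2)[m^2]$ and that the $\bm{\lambda}$-summand of $M(\Gr(m,n_1))$ is $R(N(\bm{\lambda}))[2N(\bm{\lambda})]$—confirms that the tensor product has bidegree $(m^2 + 2N(\bm{\lambda}),\, m(m+1)/2 + N(\bm{\lambda}))$, which coincides with $d(i_1,\dots,i_m) = \bigl(\sum(2i_j-1),\,\sum i_j\bigr)$ under this bijection.

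No real obstacle is anticipated; the argument is essentially bookkeeping once one recognizes that the tensor factor $M(\Gr(m,n_1))$ on the left exactly implements the classical weight-preserving bijection between partitions in an $m \times (n_1 - m)$ box and $m$-subsets of $\{1,\dots,n_1\}$, with the fixed summand $R(d(1,\dots,m))$ absorbing the uniform shift $j \mapsto j + \lambda_j$.
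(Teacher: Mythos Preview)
Your proposal is correct and follows essentially the same route as the paper's proof: both use \Cref{prop:APureTate} and \Cref{lem:FlPureTate} to strip off the common factor $M(\Fl(n_1,\dots,n_r))$ (all of whose summands sit in Chow height $0$), reduce to comparing $M(\GL_m)\otimes M(\Gr(m,n_1))$ with $M(\GL_{n_1})$ in Chow height $m$, and then invoke the bijection $\bm{\lambda}\mapsto (\lambda_1+1,\dots,\lambda_m+m)$ between partitions in an $m\times(n_1-m)$ box and $m$-subsets of $\{1,\dots,n_1\}$. Your explicit bidegree check is slightly more detailed than the paper's one-line assertion, but the argument is the same.
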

\begin{proof}
    \Cref{prop:APureTate} and \Cref{lem:FlPureTate} provide isomorphisms
    \begin{align*}
        M(A(m,n_1, \dots, n_r)) &\cong M(\GL_m) \otimes M(\Fl(m,n_1, \dots, n_r)) \\
        &\cong M(\GL_m) \otimes M(\Gr(m,n_1)) \otimes M(\Gr(n_1, n_2)) \otimes \cdots \otimes M(\Gr(n_{r-1},n_r)), \\
        M(A(n_1, \dots, n_r)) &\cong M(\GL_{n_1}) \otimes M(\Fl(n_1, \dots, n_r)) \\
        &\cong M(\GL_{n_1}) \otimes M(\Gr(n_1, n_2)) \otimes \cdots \otimes M(\Gr(n_{r-1},n_r)).
    \end{align*}
    Since the Tate summands of $M(\Gr(n_i,n_{i+1}))$ are concentrated in Chow height 0, we wish to show that $M(\GL_m) \otimes M(\Gr(m,n_1))$ and $M(\GL_{n_1})$ have the same Tate summands in Chow height $m$. The Chow height $m$ summands of the former are exactly 
    \[
        R(d(1,2, \dots, m)) \otimes \bigoplus_{\bm{\lambda} \in \Lambda} R(N(\bm{\lambda}))[2N(\bm{\lambda})]
    \]
    where $\Lambda$ is the set of $m$-tuples $\bm{\lambda} = (\lambda_1, \dots, \lambda_m)$ such that $\lambda_1 \leq \cdots \leq \lambda_m \leq n_1-m$, and $N(\bm{\lambda}) = \sum \lambda_i$. On the other hand, in light of \Cref{prop:GLnPureTate}, the Chow height $m$ summands of $M(\GL_{n_1})$ are given by
    \[
        \smashoperator[r]{\bigoplus_{1 \leq i_1 < \cdots < i_m \leq n_1}} R(d(i_1, \dots, i_m)).
    \]
    The Tate summand $R(d(1,2, \dots, m)) \otimes R(N(\bm{\lambda}))[2N(\bm{\lambda})]$ of the former corresponds to the Tate summand $R(d(\lambda_1+1,\lambda_2+2, \dots, \lambda_m+m))$ of $M(\GL_{n_1})$. 
\end{proof}

Given a pointed motivic space $(Y,y)$, let $M(Y,y)$ denote the reduced motive of $(Y,y)$: the cone on $y_*\co R \to M(Y)$. We point the scheme $A(n_1, \dots, n_r)$ at the $k$-rational point given by the isomorphism class of the diagram
\[
    \begin{tikzcd}
        k^{n_r} \rar & k^{n_{r-1}} \rar \lar[bend right] & \cdots \rar \lar[bend right] & k^{n_2} \rar \lar[bend right] & k^{n_1} \lar[bend right] \ar[loop right, looseness = 5,"\id"]
    \end{tikzcd}
\]
where each map $k^{n_{i+1}} \to k^{n_i}$ is the projection onto the first $n_i$ factors, and the splitting map $k^{n_i} \to k^{n_{i+1}}$ is the inclusion of the first $n_i$ factors. When $m \geq 1$, the motivic spaces $X_m(n_1, \dots, n_r)$ are canonically pointed. We abuse notation and denote the basepoint of any $X_m(n_1, \dots, n_r)$ by $*$.

\begin{prop}\label{prop:Mainprop}
    Suppose $k$ admits a complex embedding and that $(k,R)$ satisfies Beilinson-Soul\'{e} vanishing. For any sequence $n_1 < \cdots < n_r$ of nonnegative integers and any nonnegative $m \leq n_1$, the reduced motive $M(X_m(n_1, \dots, n_r),*)$ admits a decomposition
    \[
        M(X_m(n_1, \dots, n_r),*) \cong \smashoperator[lr]{\bigoplus_{\substack{1 \leq i_1 < \cdots < i_l \leq n_1, \\ m \leq l \leq n_1}}} M(\Fl(n_1, \dots, n_r))(d(i_1, \dots, i_l))
    \]
    such that the distinguished triangle
    \[
        \begin{tikzcd}
            M(X_{m}(m,n_1, \dots, n_r), *) \rar["{f_m^1(m,n_1, \dots, n_r)_*}"] &[3em] M(X_{m}(n_1, \dots, n_r), *) \rar &[-1.5em] M(X_{m+1}(n_1, \dots, n), *) \rar["{[1]}"] &[-1.5em] \;
        \end{tikzcd}
    \]
    is split in $\cat{DM}(k,R)$ and the map $f_m^1(m,n_1, \dots, n_r)_*$ is the inclusion of Tate summands of Chow height $m$. 
\end{prop}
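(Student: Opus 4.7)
The plan is to prove the proposition by induction on $m$, carrying two statements at each level in parallel: the claimed Tate-sum decomposition of $M(X_m(n_1, \dots, n_r), *)$, and the Chow-height-$m$ statement for the map $f_m^1(m, n_1, \dots, n_r)_*$. Both pieces are needed at level $m-1$ to run the step at level $m$.

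For the base case ($m = 1$), I would invoke \Cref{prop:APureTate}, which gives $M(A(n_1, \dots, n_r)) \cong M(\GL_{n_1}) \otimes M(\Fl(n_1, \dots, n_r))$ and identifies $s_{\id} \co \Fl'(n_1, \dots, n_r) \to A(n_1, \dots, n_r)$ with the inclusion of the $l=0$ Tate summand (the Chow-height-zero part). Further decomposing $M(\GL_{n_1})$ via \Cref{prop:GLnPureTate} and taking cofibres produces the desired decomposition of $M(X_1(n_1, \dots, n_r), *)$ over Chow heights $l \geq 1$, with the splitting of the distinguished triangle automatic since the map is inclusion of direct summands.

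For the inductive step, the decomposition of $M(X_m(n_1, \dots, n_r), *)$ follows cleanly from the IH: applying the IH at level $m-1$ to the sequences $(m-1, n_1, \dots, n_r)$ and $(n_1, \dots, n_r)$, one gets the source motive as a single Chow-height-$(m-1)$ Tate summand and the target motive as a sum over Chow heights $\geq m-1$; the IH's Chow-height claim for $f_{m-1}^1(m-1, n_1, \dots, n_r)_*$ makes this inclusion a split summand inclusion, so its cofibre $M(X_m(n_1, \dots, n_r), *)$ is precisely the sum of Tate summands in Chow heights $\geq m$, yielding the claimed decomposition and the splitting of the distinguished triangle.

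The harder part will be establishing the Chow-height-$m$ claim for $f_m^1(m, n_1, \dots, n_r)_*$ itself, which is what the induction must output to feed into the next level. This map is constructed as the induced map on vertical cofibres of the square at level $m-1$ whose vertical legs are $f_{m-1}^1(m-1, m, n_1, \dots, n_r)$ and $f_{m-1}^1(m-1, n_1, \dots, n_r)$ (both inclusions of Chow-height-$(m-1)$ summands by the IH), but whose horizontal legs $f_{m-1}^2(m-1, m, n_1, \dots, n_r)$ and $f_{m-1}^1(m, n_1, \dots, n_r)$ have source sequences not directly addressed by the IH. I would analyze the horizontal maps cohomologically by exploiting the exterior-algebra description of $\h^{*,*}(A(n_1, \dots, n_r), R)$ furnished by the proof of \Cref{prop:APureTate} together with the Pushin--Williams presentation (\Cref{lem:VCoho}), computing the induced pullback on the level-$m-1$ motives and then appealing to a cohomology-to-splitting lemma (analogous to \Cref{lem:IncTateSum}) to upgrade the resulting cohomological information to the desired identification of motivic direct summands. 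The main obstacle is precisely the recursive nature of the construction --- understanding $f_{m-1}^1(m, n_1, \dots, n_r)$ seems to require descending further, which suggests either broadening the inductive statement to track the behaviour of $f_m^1(n_0, n_1, \dots, n_r)_*$ for general starting index $n_0$, or using complex realization (\Cref{prop:XmRealization}) together with Miller's classical splitting to identify the map on the topological realization and then lifting via the rigidity of pure Tate motives.
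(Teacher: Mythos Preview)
Your proposal correctly identifies the skeleton of the argument and the main obstacle, but there is a genuine gap in the inductive scheme. The paper does not induct on $m$ alone; it runs a \emph{nested} induction, outer on $n_r$ and inner on $m$. The reason is exactly the difficulty you flag: to analyse $f_m^1(m,n_1,\dots,n_r)_*$ one must ultimately understand $f^1(m,n_1,\dots,n_r)^*$ on the level of the schemes $A(-)$ (the paper reduces to this via the projection $P_m \co A(n_1,\dots,n_r) \to X_m(n_1,\dots,n_r)$), and neither the tensor decomposition of \Cref{prop:APureTate} nor the Pushin--Williams presentation tells you this directly. The paper's device is the Rothenberg--Steenrod spectral sequence of \Cref{sec:SSCalcs}: since $A(n_1,\dots,n_r)$ is the $\GL_{n_{r-1}}$-quotient of $V'(n_{r-1},n_r) \times A(n_1,\dots,n_{r-1})$, the map $f^1(m,n_1,\dots,n_r)$ is induced by $\id \times f^1(m,n_1,\dots,n_{r-1})$, and the $\E_2$-description of \Cref{lem:E2} together with \Cref{prop:SSDescription} propagates the outer inductive hypothesis (about sequences ending in $n_{r-1}$) to the conclusion (about sequences ending in $n_r$). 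This spectral-sequence reduction is the missing ingredient in your cohomological sketch.

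Your two proposed workarounds are both correct instincts, but the paper uses them in different places rather than as alternatives. The ``broadening'' is realised as the outer induction on $n_r$, which strips one index at a time via the spectral sequence comparison. Complex realisation is then the tool for the base $r=1$ of that reduction: when the sequence is just $(n)$ there is no shorter sequence to reduce to, and the paper instead compares $f_m^1(m,n)^*$ to Miller's topological inclusion $F_m(n)/F_{m-1}(n) \hookrightarrow \U(n)/F_{m-1}(n)$ via \Cref{prop:XmRealization}, \Cref{lem:Lev}, and a class-tracking argument (\Cref{lem:TrackClasses}) showing that the $\rho_I^u$ lift to classes $x_I$ of the correct Chow height. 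So the structure is not ``broaden \emph{or} realise'' but ``reduce $r$ by spectral sequences \emph{until} $r=1$, then realise.''
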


The proof of \Cref{prop:Mainprop} is by nested induction, first on the integer $n_r$ and then on the integer $m$. 
    
\textbf{Base case $n_r=1$.} The only case to consider here is $r=m=1$. We have $X_0(0,1) = A(0,1) = \Spec(k)$ and $X_0(1) = A(1) = \GL_1$. Then $M(X_0(0,1), *) = 0$, and $M(X_0(1), *) = M(X_1(1), *) = \Z(d(1))$.

\textbf{Base case $m=0$ with $n_r > 1$.} Consider the $\A^1$-homotopy cofibre sequence 
\[
    \begin{tikzcd}
        \Fl'(n_1, \dots, n_r) \rar["{f^1(0,n_1, \dots, n_r)}"] &[2.5em] A(n_1, \dots, n_r) \rar &[-0.5em] X_1(n_1, \dots, n_r)
    \end{tikzcd}
\]
which induces the distinguished triangle of reduced motives
\[
    \begin{tikzcd}
        M(\Fl'(n_1, \dots, n_r),*) \rar["{f^1(0,n_1, \dots, n_r)}_*"] &[2.5em] M(A(n_1, \dots, n_r),*) \rar &[-0.5em] M(X_1(n_1, \dots, n_r),*) \rar["{[1]}"] &[-0.5em] \;
    \end{tikzcd}
\]
in $\cat{DM}(k,R)$. This triangle is split by the structure map $p\co  A(n_1, \dots, n_r) \to \Fl'(n_1, \dots, n_r)$ of the bundle of automorphisms (see \Cref{rem:AutBundle}). By \Cref{lem:IncTateSum}, we may arrange ${f^1(0,n_1, \dots, n_r)}_*$ to be the inclusion of those Tate summands of $M(A(n_1, \dots, n_r),*)$ with Chow height $0$.

\textbf{Inductive step: $r > 1$}. For this inductive step, suppose that $X_m(m,n_1, \dots, n_r)$ and $X_m(n_1, \dots, n_r)$ have the claimed Tate sum decompositions. We wish to show that
\[
    f_{m}^1(m,n_1, \dots, n_r)_*\co  M(X_{m}(m,n_1, \dots, n_r),*) \to M(X_{m}(n_1, \dots, n_r),*) \to X_{m+1}(n_1, \dots, n_r)
\]
is, up to isomorphism, the inclusion of Tate summands in Chow height $m$. The claimed decomposition of $M(X_{m+1}(n_1, \dots, n_r),*)$ follows readily from this.

First, we show that the source and target of $f_m^1(m,n_1, \dots, n_r)_*$ have the same Tate summands in Chow height $m$. Let $P_m$ denote the composition
\[
    A(n_1, \dots, n_r) = X_0(n_1, \dots, n_r) \to X_1(n_1, \dots, n_r) \to \cdots \to X_m(n_1, \dots, n_r),
\]
where each map is the second map in the $\A^1$-homotopy cofibre sequences
\[
    X_i(i,n_1, \dots, n_r) \xrar{f_i^1(i,n_1, \dots, n_r)} X_i(n_1, \dots, n_r) \to X_{i+1}(n_1, \dots, n_r).
\]
By the inductive hypothesis, the induced map
\[
    {P_{m}}_*\co  M(A(n_1, \dots, n_r),*) \to M(X_m(n_1, \dots, n_r),*)
\]
is the projection onto the Tate summands of Chow height at least $m$. In particular, the source and target of ${P_m}_*$ have the same Tate summands in Chow height $m$. The same is true for the corresponding map 
\[
    {P_{m}}_*\co  M(A(m,n_1, \dots, n_r),*) \to M(X_m(m, n_1, \dots, n_r),*).
\]
The claimed bijection of Chow height $m$ summands now follows from \Cref{lem:Bijection}. 

Since the diagram
\[
    \begin{tikzcd}
        M(A(m, n_1, \dots, n_r),*) \dar["{f^1(m,n_1, \dots,n_r)}_*"] \rar["{P_m}_*"] & M(X_m(m, n_1, \dots, n_r),*) \dar["{f_m^1(m,n_1, \dots,n_r)}_*"] \\
        M(A(n_1, \dots, n_r),*) \rar["{P_m}_*"] & M(X_m(n_1, \dots, n_r),*)
    \end{tikzcd}
\]
commutes, by Lemmas \ref{lem:CohoImpliesSplit} and \ref{lem:IncTateSum} it suffices to show that there are presentations of the source and target of 
\[
    {f^1(m,n_1, \dots, n_r)}^* \co  \h^{*,*}(A(n_1, \dots, n_r)) \to \h^{*,*}(A(m, n_1, \dots, n_r)) 
\]
as free bigraded $\m_R$-modules such that ${f^1(m,n_1, \dots, n_r)}^*$ is a bijection on the respective sets of module generators with Chow height $m$. To this end, we compare Rothenberg--Steenrod spectral sequences. As in \Cref{sec:SSCalcs}, let $\{\E_s^{*,*,*}, d_s\}$ denote the spectral sequence associated with the $\GL_{n_{r-1}}$-action on $V'(n_{r-1},n_r) \times A(n_1, \dots, n_{r-1})$ which converges to $\h^{*,*}(A(n_1, \dots, n_r))$. Let $\{\tensor*[^\prime]{\E}{_s^{*,*,*}}, \tensor*[^\prime]{d}{_s}\}$ denote the spectral sequence associated with the $\GL_{n_{r-1}}$-action on $V'(n_{r-1},n_r) \times A(m, n_1, \dots, n_{r-1})$ which converges to $\h^{*,*}(A(m,n_1, \dots, n_r))$. The map $f^1(m,n_1, \dots, n_r)$ is induced by the $\GL_{n_{r-1}}$-equivariant map
\[
    \id \times f^1(m,n_1, \dots, n_{r-1})\co  V'(n_{r-1},n_r) \times A(m,n_1, \dots, n_{r-1}) \to V'(n_{r-1},n_r) \times A(n_1, \dots, n_{r-1}),
\]
so we have an associated map of spectral sequences $\{\E_s^{*,*,*}, d_s\} \to \{ \tensor*[^\prime]{\E}{_s^{*,*,*}}, \tensor*[^\prime]{d}{_s}\}$.

By hypothesis, there are free $\m_R$-module presentations of the source and target of  
\[
    f^1(m,n_1, \dots, n_{r-1})^*\co  \h^{*,*}(A(n_1, \dots, n_{r-1}),R) \to \h^{*,*}(A(m, n_1, \dots, n_{r-1}),R)
\]
such that $f^1(m,n_1, \dots, n_{r-1})^*$ gives a bijection on $\m_R$-module generators in Chow height $m$. The associated map of $\E_2$-pages thus sends $\alpha_i'$ to $\alpha_i'$, $\theta_i$ to $\theta_i$, and induces a bijection on those elements $\beta_j'$ in total Chow height $m$. It follows from the \Cref{prop:SSDescription} that, on the $\E_\infty$-page, the $\m_R$-module generators in total Chow height $m$ in $\E_\infty^{*,*,*}$ map bijectively to the $\m_R$-module generators in total Chow height $m$ in $\tensor*[^\prime]{\E}{_\infty^{*,*,*}}$. As the $\m_R$-module generators in total Chow height $m$ of the respective spectral sequences give rise to $\m_R$-module generators in Chow height $m$ of $\h^{*,*}(A(m,n_1, \dots, n_r),R)$ and $\h^{*,*}(A(n_1, \dots, n_r),R)$, we are done.

\textbf{Inductive step: $r=1$.} Suppose that $M(X_m(m,n),*)$ and $M(X_m(n),*)$ have the claimed Tate sum decompositions. Since the Tate summands of $M(X_m(m,n),*)$ are concentrated in Chow height $m$ and the motives $M(X_m(m,n),*)$ and $M(X_m(n),*)$ have the same Tate summands in Chow height $m$ by hypothesis, it suffices to show that 
\begin{equation}\label{eq:f(m,n)*}
    f_m^1(m,n)^*\co  \tilde{\h}{\vphantom{\h}}^{p,q}(X_m(n),R) \to \tilde{\h}{\vphantom{\h}}^{p,q}(X_m(m,n),R)
\end{equation}
is an isomorphism whenever $2q-p = m$. The diagram 
\begin{equation}\label{eq:KeyDiagram}
    \begin{tikzcd}
        \tilde{\h}{\vphantom{\h}}^{*,*}(X_m(m,n),R) \dar["{\tilde{Re}_{\C,X_m(m,n)}}"] & \tilde{\h}{\vphantom{\h}}^{*,*}(X_m(n),R) \lar["{f_m^1(m,n)^*}"'] \dar["{\tilde{Re}_{\C,X_m(n)}}"]  \\
        \tilde{\h}{\vphantom{\h}}^*(F_m(n)/F_{m-1}(n),R) & \tilde{\h}{\vphantom{\h}}^*(\U(n)/F_{m-1}(n),R) \lar["i_m^*"']
    \end{tikzcd}
\end{equation}
commutes by \Cref{prop:XmRealization}, where $i_m\co F_m(n)/F_{m-1}(n) \to \U(n)/F_{m-1}(n)$ is the inclusion of spaces. Fix a bidegree $(p,q)$ with $2q-p = m$. By hypothesis, the groups $\tilde{\h}{\vphantom{\h}}^{p,q}(X_m(n),R),$ $\tilde{\h}{\vphantom{\h}}^{p,q}(X_m(m,n),R)$, and $\tilde{\h}{\vphantom{\h}}^p(F_m(n)/F_{m-1}(n),R)$ are free $R$-modules of the same rank. It therefore suffices to show that $Re_{\C,X_m(m,n)} \circ f_m^1(m,n)^* = i_m^* \circ Re_{\C,X_m(n)}$, as a map
\begin{equation}\label{eq:KeySurj}
    \tilde{\h}{\vphantom{\h}}^{p,q}(X_m(n),R) \to \tilde{\h}{\vphantom{\h}}^p(F_m(n)/F_{m-1}(n),R),
\end{equation}
is surjective. 

Fix a presentation $\h^*(\U(n),R) = \Lambda_R(\rho_1^u, \dots, \rho_n^u)$ such that $Re_{\C,\GL_n}(\rho_i) = \rho_i^u$ as in \Cref{lem:ReRho}. 

\begin{lem}\label{lem:Lev}
    There is a stable splitting of $\U(n)$ of the form \eqref{eq:MilSplitting} such that, in the induced direct sum decomposition in cohomology, the summand $\tilde{\h}{\vphantom{\h}}^*(F_m(n)/F_{m-1}(n),R)$ of $\h^*(\U(n),R)$ coincides with $\Lambda_R^m(\rho_1^u, \dots, \rho_n^u)$.
\end{lem}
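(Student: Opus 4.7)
The plan is to deduce the identification from the multiplicative compatibility of Crabb's refinement of Miller's stable splitting, established in \cite{Crabb:StableSummands}. Write $T := F_1(n)/F_0(n) = \Sigma\CP^{n-1}_+$ for the first stable summand. Crabb produces the splitting by factoring a James-type map $J(T) \to \U(n)$ through the stable summands, so that for each $m$ the summand $F_m(n)/F_{m-1}(n)$ arises as a stable retract of $T^{\wedge m}$, with the stable inclusion $F_m(n)/F_{m-1}(n) \to \U(n)_+$ factoring through the iterated multiplication $T^{\wedge m} \to \U(n)^{\wedge m}_+ \to \U(n)_+$. Consequently, in cohomology the inclusion $\tilde{\h}^*(F_m(n)/F_{m-1}(n), R) \hookrightarrow \h^*(\U(n), R)$ lands inside the $R$-submodule generated by $m$-fold cup products of classes in the image of $\tilde{\h}^*(T, R) \to \h^*(\U(n), R)$ coming from the $m=1$ summand.

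Next, I would identify the image of the $m=1$ summand explicitly. The map $\Sigma\CP^{n-1}_+ \to \U(n)_+$ — the topological avatar of the motivic map studied by Williams in \cite{Wil:MotivicCoho} — pulls back the algebra generator $\rho_i^u \in \h^{2i-1}(\U(n), R)$ to (a generator corresponding to) the suspension of the $(i-1)$-st power of the hyperplane class in $\tilde{\h}^*(\Sigma\CP^{n-1}_+, R)$. In particular, the image of $\tilde{\h}^*(T, R) \to \h^*(\U(n), R)$ is exactly the word-length-$1$ submodule $\Lambda_R^1(\rho_1^u, \dots, \rho_n^u)$. Cup products of $m$ such generators in the exterior algebra $\h^*(\U(n), R) = \Lambda_R(\rho_1^u, \dots, \rho_n^u)$ span the word-length-$m$ submodule $\Lambda_R^m(\rho_1^u, \dots, \rho_n^u)$, so combining with the first paragraph yields an inclusion $\tilde{\h}^*(F_m(n)/F_{m-1}(n), R) \subseteq \Lambda_R^m(\rho_1^u, \dots, \rho_n^u)$ for each $m$.

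To upgrade inclusion to equality, I sum over $m$. The left-hand side sums to $\h^*(\U(n), R)$ by Miller's splitting, and the right-hand side sums to $\h^*(\U(n), R)$ by the exterior algebra word-length decomposition. Since both decompositions of $\h^*(\U(n), R)$ are direct and the termwise inclusions respect the $m$-grading, each inclusion must be an equality. The main delicate point I anticipate is the multiplicative compatibility claim of the first paragraph — that Crabb's splitting really does agree with Miller's and realises each $F_m(n)/F_{m-1}(n)$ as a stable retract of $T^{\wedge m}$ — which is essentially the main result of \cite{Crabb:StableSummands}. Once this is granted, the remaining argument is routine cohomological bookkeeping using the Hopf algebra structure on $\h^*(\U(n), R)$.
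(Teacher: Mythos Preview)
Your approach is workable and reaches the right conclusion, but it differs from the paper's proof and contains one directional slip worth flagging.

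The paper takes a shorter route: it reduces to $R=\Z$, cites \cite{Crabb:StableSummands} for the analogous statement in the Pontryagin ring $\h_*(\U(n),\Z)$ --- that the homological summand $\tilde\h_*(F_m(n)/F_{m-1}(n))$ coincides with the word-length-$m$ part --- and then simply dualizes, using that $\h^*(\U(n),\Z)$ is an exterior Hopf algebra on primitive generators and hence self-dual. Your approach instead unpacks Crabb's James-type construction directly in cohomology; this is more explicit and avoids the Hopf-duality step, at the cost of having to track the multiplicative compatibility carefully.

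The slip: the stable \emph{section} $F_m(n)/F_{m-1}(n) \to \U(n)_+$ that you factor through $T^{\wedge m} \to \U(n)_+$ induces, contravariantly, the cohomological \emph{projection} $\h^*(\U(n),R) \twoheadrightarrow \tilde\h^*(F_m(n)/F_{m-1}(n),R)$, not the inclusion. What your coproduct analysis actually yields is that this projection annihilates $\Lambda_R^l$ for every $l \neq m$; running this over all $m$ gives $\Lambda_R^l \subseteq V_l$ (writing $V_l$ for the image of the cohomological inclusion), which is the reverse containment to the one you assert. Your final summation-and-rank step still forces equality, so the argument can be repaired with a one-line change, but the intermediate sentence ``the inclusion lands inside $\Lambda_R^m$'' is not what the factoring of the section delivers.
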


\begin{proof}
    It suffices to prove the lemma for $R=\Z$. The stable splitting maps constructed in \cite{Crabb:StableSummands} establish the analogous statement for the Pontryagin ring $\h_*(\U(n),\Z)$. It follows from the universal coefficient theorem and the fact that the elements $\rho_i^u$ are primitive for the Hopf algebra $\h^*(\U(n),\Z)$ that there is a graded ring isomorphism $\h^*(\U(n),\Z) \cong \h_*(\U(n),\Z)$ given by taking the dual Hopf algebra. The result follows.
\end{proof}

Given a strictly increasing sequence $I = (i_1, \dots, i_l)$ in $\{1, \dots, n\}$, let $\rho_I^u$ denote the $l$-fold product $\rho_{i_1}^u \rho_{i_2}^u \cdots \rho_{i_l}^u$. Let $\Lambda_R^{\geq m} (\rho_1^u, \dots, \rho_n^u)$ denote the submodule
\[
    \bigoplus_{i \geq m} \Lambda_R^i (\rho_1^u, \dots, \rho_n^u) \subseteq \Lambda_R^*(\rho_1^u, \dots, \rho_n^u).
\]
Using \Cref{lem:Lev}, we may identify  $\tilde{\h}{\vphantom{\h}}^*(\U(n)/F_{m-1}(n),R)$ with the submodule $\Lambda_R^{\geq m}(\rho_1^u, \dots, \rho_n^u)$ of $\h^*(\U(n),R)$. Given a homogeneous cohomology class $\alpha$ in bidegree $(p,q)$, let $\ch(\alpha)$ denote the Chow height of $\alpha$: the integer $2q-p$. The following lemma establishes the surjectivity of \eqref{eq:KeySurj}, thus concluding the proof of \Cref{prop:Mainprop}.

\begin{lem}\label{lem:TrackClasses}
    Let $m \geq 0$, and assume that there are isomorphisms:
    \[
        M(X_m(m,n),*) \cong M(\Gr(m,n))(d(1,2, \dots, m)), \quad M(X_m(n),*) \cong \smashoperator[lr]{\bigoplus_{\substack{1 \leq i_1 < \cdots < i_l \leq n \\ m \leq l \leq n}}} R(d(i_1, \dots, i_l)).
    \]
    in $\cat{DM}(k,R)$. The following hold:
    \renewcommand{\labelenumi}{(\roman{enumi})}
    \begin{enumerate}
        \item For each strictly increasing sequence $I$ in $\{1, \dots, n\}$ with $|I| \geq m$, there is a homogeneous class $x_I \in \tilde{\h}{\vphantom{\h}}^{*,*}(X_m(n),R)$ with $\ch(x_I) = |I|$ and $\tilde{Re}(x_I) = \rho_I^u \in \tilde{\h}{\vphantom{\h}}^*(\U(n)/F_{m-1}(n),R)$.
        \item For any bidegree $(p,q)$ with $2q-p=m$, the map
        \[
            i_m^* \circ Re_{\C,X_m(n)}: \tilde{\h}{\vphantom{\h}}^{p,q}(X_m(n),R) \to \tilde{\h}{\vphantom{\h}}^p(F_m(n)/F_{m-1}(n),R)
        \]
        is surjective.
    \end{enumerate}
\end{lem}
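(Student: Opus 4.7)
The plan is to construct the classes $x_I$ in (i) by pulling back from $\tilde{\h}^{*,*}(\GL_n, R)$ along the composite quotient $P_m$, and then to deduce (ii) from the $|I| = m$ case of (i).

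For (i), the prior stages of the induction in the proof of \Cref{prop:Mainprop} identify ${P_m}_* \co M(\GL_n, *) \to M(X_m(n), *)$ with the projection onto those Tate summands $R(d(I))$ of $M(\GL_n, *)$ with $|I| \geq m$. Dually, $P_m^* \co \tilde{\h}^{*,*}(X_m(n), R) \to \tilde{\h}^{*,*}(\GL_n, R)$ is the inclusion of the free $\m_R$-submodule spanned by the distinguished generators of these summands. For each $I = (i_1, \dots, i_l)$ with $l \geq m$, I let $x_I \in \tilde{\h}^{p_I, q_I}(X_m(n), R)$, where $(p_I, q_I) := d(I)$, be the unique class with $P_m^*(x_I) = \rho_{i_1} \cdots \rho_{i_l}$; by construction $\ch(x_I) = |I|$.

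To verify $\tilde{Re}_{\C, X_m(n)}(x_I) = \rho_I^u$, I apply $SRe_\C$ to the cofibre sequences defining $X_m(n)$ and invoke \Cref{prop:XmRealization} to identify $Re_\C(X_m(n)) \simeq \U(n)/F_{m-1}(n)$ and $Re_\C(P_m)$ with the quotient $q \co \U(n) \to \U(n)/F_{m-1}(n)$. Naturality of realization yields a commutative square
\begin{equation*}
\begin{tikzcd}
    \tilde{\h}^{*,*}(X_m(n), R) \rar["P_m^*"] \dar["\tilde{Re}_{\C, X_m(n)}"'] & \tilde{\h}^{*,*}(\GL_n, R) \dar["Re_{\C, \GL_n}"] \\
    \tilde{\h}^*(\U(n)/F_{m-1}(n), R) \rar["q^*"] & \h^*(\U(n), R).
\end{tikzcd}
\end{equation*}
By \Cref{rem:ReRingHom} and \Cref{lem:ReRho}, $Re_{\C, \GL_n}(\rho_{i_1} \cdots \rho_{i_l}) = \rho_I^u$, so chasing the square gives $q^*(\tilde{Re}_{\C, X_m(n)}(x_I)) = \rho_I^u$. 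By \Cref{lem:Lev}, $q^*$ is the inclusion $\Lambda_R^{\geq m}(\rho_1^u, \dots, \rho_n^u) \hookrightarrow \Lambda_R(\rho_1^u, \dots, \rho_n^u)$. Since $|I| \geq m$, the class $\rho_I^u$ lies in the image, and injectivity forces $\tilde{Re}_{\C, X_m(n)}(x_I) = \rho_I^u$.

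For (ii), fix $(p, q)$ with $2q - p = m$. Under the Miller splitting \eqref{eq:MilSplitting} and \Cref{lem:Lev}, the pullback $i_m^*$ corresponds to projection of $\Lambda_R^{\geq m}(\rho_1^u, \dots, \rho_n^u)$ onto its word-length-$m$ summand: $i_m^*(\rho_J^u) = \rho_J^u$ if $|J| = m$ and zero otherwise. Combining with (i), for each $I$ with $|I| = m$ and $(p_I, q_I) = (p, q)$, the class $x_I \in \tilde{\h}^{p, q}(X_m(n), R)$ satisfies $(i_m^* \circ \tilde{Re}_{\C, X_m(n)})(x_I) = \rho_I^u$. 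As $I$ ranges over such sequences, the classes $\rho_I^u$ form an $R$-basis for the degree-$p$ piece of $\Lambda_R^m(\rho_1^u, \dots, \rho_n^u) = \tilde{\h}^p(F_m(n)/F_{m-1}(n), R)$, giving the desired surjectivity. I do not expect a substantial obstacle, as the argument is essentially bookkeeping once the Tate decompositions and the realization compatibility from \Cref{prop:XmRealization} are in hand; the most delicate input is the word-length interpretation of $i_m^*$, which rests on Miller's splitting being filtration-preserving so that $i_m$ corresponds to inclusion of the $m$-th wedge summand in $\U(n)/F_{m-1}(n) \simeq \bigvee_{k \geq m} F_k(n)/F_{k-1}(n)$.
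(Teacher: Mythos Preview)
There is a genuine gap in your argument for (i). You assert the existence of a class $x_I$ with $P_m^*(x_I)=\rho_{i_1}\cdots\rho_{i_l}$, which presupposes that the product class $\rho_I$ lies in the image of $P_m^*$. The prior stages of the induction do identify ${P_m}_*$ with a projection onto the Chow-height-$\ge m$ Tate summands, but only with respect to the particular Tate decomposition of $M(\GL_n,*)$ manufactured step by step via \Cref{lem:IncTateSum}; there is no reason for the dual $\m_R$-module generators $e_J$ of that decomposition to coincide with the product classes $\rho_J$. A change-of-basis argument (using $\m_R^{p,q}=0$ unless $0\le p\le q$) shows only that each $e_J$ is an $\m_R$-linear combination of $\rho_{J'}$ with $|J'|\le |J|$, and inversely each $\rho_I$ is an $\m_R$-linear combination of $e_J$ with $|J|\le |I|$. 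Thus $\rho_I$ can involve $e_J$ with $|J|<m$ and fail to lie in $\mathrm{im}\,P_m^*=\m_R\text{-span}\{e_J:|J|\ge m\}$. Concretely, over $k=\Q$ one has $\m_R^{1,1}=\Q^\times\otimes R\ne 0$, so already $\rho_1\rho_2\in\tilde\h^{4,3}(\GL_n,R)$ need not lift through $P_2^*$.

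This obstruction is exactly what the paper's argument is built to handle. The paper proceeds by induction on $m$: given $y_I\in\tilde\h^{*,*}(X_{m-1}(n),R)$ realizing to $\rho_I^u$, one computes $f_{m-1}^{1\,*}(y_I)=\sum_i c_i\gamma_i$ and uses \Cref{lem:Lev} (via $i_{m-1}^*(\rho_I^u)=0$) to deduce $Re_{\C,\Spec k}(c_i)=0$ for all $i$. One then subtracts preimages $\sum_i c_i\tilde\gamma_i$ to push $y_I$ into $\ker f_{m-1}^{1\,*}$, hence into the image of $\tilde\h^{*,*}(X_m(n),R)$, without disturbing the realization. Your direct lift attempts to bypass this correction step, but over a base with $\m_R$ nontrivial in positive degrees it cannot be bypassed: the content of the lemma is precisely that the motivic correction terms, while possibly nonzero, become invisible after complex realization.
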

\begin{proof}
    The proof is by induction on $m$. The base case $m=0$ is straightforward. Let $m > 0$, assume the result holds for $m-1$, and fix a sequence $I$ in $\{ 1, \dots, n \}$ with $|I| \geq m$. Realization of the cofibre sequence 
    \[
        \begin{tikzcd}[column sep = 4em]
            X_{m-1}(m-1,n) \rar["{f_{m-1}^1(m-1,n)}"] & X_{m-1}(n) \rar & X_m(n)
        \end{tikzcd}
    \]
    induces the commuting diagram 
    \[
        \begin{tikzcd}[column sep = small]
            0 \rar & \tilde{\h}{\vphantom{\h}}^{*,*}(X_m(n),R) \rar \dar["\tilde{Re}_{\C,X_m(n)}"] & \tilde{\h}{\vphantom{\h}}^{*,*}(X_{m-1}(n),R) \rar["{f_{m-1}^1(m-1,n)^*}"] \dar["\tilde{Re}_{\C,X_{m-1}(n)}"] &[2em] \tilde{\h}{\vphantom{\h}}^{*,*}(X_{m-1}(m-1,n),R) \rar \dar["\tilde{Re}_{\C,X_{m-1}(m-1,n)}"] & 0 \\
            0 \rar & \tilde{\h}{\vphantom{\h}}^*(\U(n)/F_{m-1}(n),R) \rar & \tilde{\h}{\vphantom{\h}}^*(\U(n)/F_{m-2}(n),R) \rar["i_{m-1}^*"]  &[2em] \tilde{\h}{\vphantom{\h}}^*(F_{m-1}(n)/F_{m-2}(n),R) \rar & 0
        \end{tikzcd}
    \]
    where the top row is (split) exact by hypothesis. There is a class $y_I \in \tilde{\h}{\vphantom{\h}}^{*,*}(X_{m-1}(n),R)$ in Chow height $|I|$ such that $\tilde{Re}_{\C,X_{m-1}(n)}(y_I) = \rho_I^u$ by the inductive hypothesis. Fix $\m_R$-module generators $\{\gamma_i\}_i$ of $\tilde{\h}{\vphantom{\h}}^{*,*}(X_{m-1}(m-1,n),R)$ with $\ch(\gamma_i) = m-1$ for each $i$. Then the set $\{Re_{\C,X_{m-1}(m-1,n)}(\gamma_i)\}_i$ forms a basis for $\tilde{\h}^*(F_{m-1}(n)/F_{m-2}(n),R)$, as $\tilde{\h}{\vphantom{\h}}^{p',q'}(X_{m-1}(n),R)$, $\tilde{\h}{\vphantom{\h}}^{p',q'}(X_{m-1}(m-1,n),R)$ and $\tilde{\h}{\vphantom{\h}}^{p'}(F_{m-1}(n)/F_{m-2}(n),R)$ are free $R$-modules of the same rank when $2q'-p'= m-1$ and the second part of the lemma holds for the integer $m-1$ by hypothesis. We have that $f_{m-1}^1(m-1,n)^*(y_I) = \sum_i c_i \gamma_i$ for some elements $c_i \in \m_R$. Then 
    \[
        \sum_i Re_{\C,\Spec k}(c_i) Re_{\C,X_{m-1}(m-1,n)}(\gamma_i) = Re_{\C,X_{m-1}(m-1,n)}(\sum_i c_i \gamma_i) = i_{m-1}^*(\rho_I^u) = 0
    \]
    by \Cref{lem:Lev}. Necessarily, $Re_{\C,\Spec k}(c_i) = 0$ for each $i$. We can find classes $\tilde{\gamma}_i \in \tilde{\h}{\vphantom{\h}}^{*,*}(X_{m-1}(n),R)$ in Chow height $m$ with $f_{m-1}^1(m-1,n)^*(\tilde{\gamma}_i) = \gamma_i$. Then the class $y_I - \sum_i c_i \tilde{\gamma}_i$ realizes to $\rho_I^u$ and is in the image of 
    \[
        \tilde{\h}{\vphantom{\h}}^{*,*}(X_m(n),R) \to \tilde{\h}{\vphantom{\h}}^{*,*}(X_{m-1}(n),R).
    \]
    This establishes the first part. With the help of \Cref{lem:Lev}, the second part follows from the first. 
\end{proof}

Given a motivic space $Y$ defined over the integers and a Noetherian scheme $S$ of finite Krull dimension, we let $Y_S$ denote the value of $Y$ under the base change functor $\cat{Spc}(\Z) \to \cat{Spc}(S)$. We remark that the motivic spaces $A(n_1, \dots, n_r)$ and $X_m(n_1, \dots, n_r)$ over $k$ are the base change of motivic spaces defined over $\Z$, as the flag varieties and the general linear groups are defined over $\Z$ and the functor $\cat{Spc}(\Z) \to \cat{Spc}(S)$ is left Quillen.

\begin{prop}\label{prop:Char0BC}
    Suppose $k$ is a characteristic $0$ field and $R$ is a commutative, unital ring. The conclusion of \Cref{prop:Mainprop} holds for the pair $(k,R)$.
\end{prop}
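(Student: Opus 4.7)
The plan is to reduce to the case $k = \Q$, which is covered by \Cref{prop:Mainprop}, via base change along the structure map $\Spec k \to \Spec \Q$. First I would verify that the hypotheses of \Cref{prop:Mainprop} hold for the pair $(\Q, R)$ for any commutative unital coefficient ring $R$: the field $\Q$ embeds into $\C$, and as $\Q$ is a number field, the pair $(\Q, R)$ satisfies Beilinson--Soul\'{e} vanishing (as recorded in the discussion preceding \Cref{def:PrincBund}). Thus \Cref{prop:Mainprop} provides the claimed Tate sum decomposition of $M(X_m(n_1, \dots, n_r)_\Q, *)$ in $\cat{DM}(\Q, R)$, together with the splitting of the distinguished triangle in which $f_m^1(m, n_1, \dots, n_r)_\Q$ realises the inclusion of Tate summands of Chow height $m$.

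Next I would transport this splitting along the base change functor $\phi^* \co \cat{DM}(\Q, R) \to \cat{DM}(k, R)$ induced by the unique map $\phi \co \Spec k \to \Spec \Q$. This functor is symmetric monoidal and triangulated, sends each Tate motive $R(q)[p]$ to $R(q)[p]$, commutes with direct sums, and carries split distinguished triangles to split distinguished triangles. Applying $\phi^*$ to the splitting over $\Q$ therefore produces a candidate splitting in $\cat{DM}(k, R)$. To conclude, I would identify $\phi^* M(X_m(n_1, \dots, n_r)_\Q, *)$ with $M(X_m(n_1, \dots, n_r)_k, *)$, and likewise identify $\phi^* M(\Fl(n_1, \dots, n_r)_\Q)$ with $M(\Fl(n_1, \dots, n_r)_k)$ and $\phi^* f_m^1(m, n_1, \dots, n_r)_{\Q *}$ with $f_m^1(m, n_1, \dots, n_r)_{k *}$.

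These identifications reduce to checking that the inductive construction of $X_m(n_1, \dots, n_r)$ commutes with base change. The schemes $A(n_1, \dots, n_r)$ and $\Fl'(n_1, \dots, n_r)$, as well as all of the structural maps of \Cref{const:f,p} used to assemble the $X_m$, are defined over $\Spec \Z$ (indeed the entirety of \Cref{sec:AutBundles} and \Cref{sec:AuxSpaces} takes place there). Since the base change functor $\cat{Spc}(\Z) \to \cat{Spc}(k)$ is left Quillen, it preserves the iterated homotopy cofibres that define the $X_m$, and the motive functor $M \co \cat{Spc}(-) \to \cat{DM}(-, R)$ is itself compatible with base change. The main obstacle I anticipate is precisely this bookkeeping: one must propagate the base change compatibility through the nested induction of \Cref{sec:AuxSpaces}, keeping careful track of basepoints and of the identifications of cofibres. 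Once this formal compatibility is in place, no additional computation is required: the desired splitting in $\cat{DM}(k, R)$ is the image under $\phi^*$ of the splitting over $\Q$.
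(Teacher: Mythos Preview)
Your approach is correct but differs from the paper's. Rather than transporting the entire splitting along $\phi^*\co\cat{DM}(\Q,R)\to\cat{DM}(k,R)$ in one stroke, the paper \emph{reruns} the nested induction of \Cref{prop:Mainprop} over $k$: the base cases go through verbatim, and at each inductive step the paper compares motivic cohomology to the case $k=\Q$ via the base-change square
\[
\begin{tikzcd}[column sep = 55]
    \tilde{\h}{\vphantom{\h}}^{p,q}(X_m(n_1,\dots,n_r)_\Q,R) \dar \rar["{f_m^1(m,n_1,\dots,n_r)^*}"] & \tilde{\h}{\vphantom{\h}}^{p,q}(X_m(m,n_1,\dots,n_r)_\Q,R) \dar \\
    \tilde{\h}{\vphantom{\h}}^{p,q}(X_m(n_1,\dots,n_r)_k,R) \rar["{f_m^1(m,n_1,\dots,n_r)^*}"] & \tilde{\h}{\vphantom{\h}}^{p,q}(X_m(m,n_1,\dots,n_r)_k,R).
\end{tikzcd}
\]
The motives involved are pure Tate (over $\Q$ by \Cref{prop:Mainprop}, over $k$ by the running inductive hypothesis) with summands concentrated in Chow height $\geq m$, so in bidegrees with $2q-p=m$ only $\h^{0,0}(\Spec(-),R)=R$ contributes and the vertical maps are isomorphisms. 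The top map is an isomorphism by \Cref{prop:Mainprop}, hence so is the bottom, and \Cref{lem:CohoImpliesSplit} together with \Cref{lem:IncTateSum} then split the triangle over $k$. Your route is more conceptual and avoids rerunning the induction, at the price of verifying the object-level identification $\phi^* M((X_m)_\Q,*)\cong M((X_m)_k,*)$ for the iterated cofibres and the corresponding compatibility for the maps $f_m^1$ (which, as you note, is formal once one knows everything is defined over $\Z$ and the relevant functors preserve cofibres). The paper's route never needs that identification: it stays at the level of cohomology groups, where the comparison collapses to the identity $R\to R$ in bidegree $(0,0)$.
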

\begin{proof}
    The same arguments for the base cases in the proof of \Cref{prop:Mainprop} apply. For the inductive step (and for any $r$), we compare to the prime field $\Q$ for which the hypotheses of \Cref{prop:Mainprop} hold. Base change induces a diagram
    \[
        \begin{tikzcd}[column sep = 60]
            \tilde{\h}{\vphantom{\h}}^{p,q}(X_m(n_1, \dots, n_r)_\Q,R) \dar \rar["{f_m^1(m,n_1, \dots, n_r)^*}"] & \tilde{\h}{\vphantom{\h}}^{p,q}(X_m(m,n_1, \dots, n_r)_\Q,R) \dar \\
            \tilde{\h}{\vphantom{\h}}^{p,q}(X_m(n_1, \dots, n_r)_k,R) \rar["{f_m^1(m,n_1, \dots, n_r)^*}"] & \tilde{\h}{\vphantom{\h}}^{p,q}(X_m(m,n_1, \dots, n_r)_k,R)
        \end{tikzcd}
    \]
    We wish to show that the bottom horizontal map is an isomorphism when $2q-p=m$. The top horizontal map is an isomorphism by \Cref{prop:Mainprop}. The vertical maps are also isomorphisms since the motives involved are pure Tate (in $\cat{DM}(\Q,R)$ or $\cat{DM}(k,R)$, whatever the case may be) by hypothesis and 
    \[
        \h^{0,0}(\Spec \Q,R) \to \h^{0,0}(\Spec k,R)
    \]
    is an isomorphism. 
\end{proof}

The following is an important special case of \Cref{prop:Char0BC}. Note that $X_0(n) = X_1(n) = \GL_n$ by definition.

\begin{thm}\label{thm:MainThm}
    Suppose $k$ is a characteristic 0 field. For each $1 \leq m \leq n$ the distinguished triangle
    \[
        \begin{tikzcd}[column sep = large]
            M(X_m(m,n),*) \rar["{f_m^1(m,n)}_*"] & M(X_m(n),*) \rar & M(X_{m+1}(n),*) \rar["{[1]}"] & \;
        \end{tikzcd}
    \]
    splits in $\cat{DM}(k,R)$, inducing a decomposition
    \[
        M(\GL_n,I_n) \cong \bigoplus_{m=1}^n M(X_m(m,n),*).
    \]
    Moreover, if $k$ admits a complex embedding, the map $f_m^1(m,n)\co X_m(m,n) \to X_m(n)$ complex realizes (up to homotopy) to the inclusion $F_m(n)/F_{m-1}(n) \to \U(n)/F_{m-1}(n)$.
\end{thm}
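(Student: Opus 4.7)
The plan is to recognize that \Cref{thm:MainThm} is essentially the specialization of \Cref{prop:Char0BC} to the one-element sequence $(n)$, together with a short iteration and an appeal to \Cref{prop:XmRealization}. Applying \Cref{prop:Char0BC} with $r=1$ and $n_1 = n$, we obtain for each $1 \leq m \leq n$ a Tate sum decomposition of $M(X_m(n),*)$ such that the map $f_m^1(m,n)_*$ in the distinguished triangle
\[
    M(X_m(m,n),*) \to M(X_m(n),*) \to M(X_{m+1}(n),*) \xrar{[1]}
\]
is, up to isomorphism, the inclusion of the Tate summands of Chow height $m$ into the summands of Chow height $\geq m$. A one-sided inverse is therefore given by projection onto the Chow height $m$ summands, so the triangle splits in $\cat{DM}(k,R)$.

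To assemble these splittings into the asserted decomposition of $M(\GL_n,I_n)$, first observe that $\Fl'(n) = \Spec k$ and $A(n) = \GL_n$, and that under the basepoint convention fixed just before \Cref{prop:Mainprop} the map $f^1(0,n)\co \Spec k \to \GL_n$ picks out $I_n$; hence $X_1(n) = \GL_n / \{I_n\}$ and $M(X_1(n),*) = M(\GL_n, I_n)$. Iterating the splitting established above for $m = 1, 2, \dots, n$ yields isomorphisms
\[
    M(X_m(n),*) \cong M(X_m(m,n),*) \oplus M(X_{m+1}(n),*).
\]
One then needs to verify that $M(X_{n+1}(n),*) = 0$: inspecting the Tate sum formula supplied by \Cref{prop:Char0BC}, the summands of $M(X_{n+1}(n),*)$ are indexed by strictly increasing sequences $1 \leq i_1 < \cdots < i_l \leq n$ with $l \geq n+1$, of which there are none. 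Concatenating the splittings then produces $M(\GL_n, I_n) \cong \bigoplus_{m=1}^n M(X_m(m,n),*)$ as claimed.

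The complex realization statement requires no new work: \Cref{prop:XmRealization} identifies $Re_\C(f_m^1(m,n))$ with the canonical inclusion $F_m(n)/F_{m-1}(n) \to \U(n)/F_{m-1}(n)$ up to homotopy. The main obstacle has been absorbed into Propositions \ref{prop:Mainprop} and \ref{prop:Char0BC} (namely the Rothenberg--Steenrod comparison argument and the base change from $\Q$); the only remaining subtlety is the routine bookkeeping of basepoints and the verification that the Tate sum formula correctly vanishes for $m = n+1$, both of which are straightforward.
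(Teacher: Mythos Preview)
Your proposal is correct and matches the paper's approach: the paper presents \Cref{thm:MainThm} as an immediate special case of \Cref{prop:Char0BC} (with $r=1$, $n_1=n$), noting only that $X_0(n)=X_1(n)=\GL_n$, and your write-up simply unpacks the iteration and the basepoint bookkeeping that the paper leaves implicit. The complex realization clause is handled exactly as you say, by \Cref{prop:XmRealization}.
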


\section{The splitting in positive characteristic}\label{sec:CharPSplitting}

To establish \Cref{thm:MainThm} in positive characteristic, one must invert the characteristic of the coefficient ring $R$ in the base field $k$. We begin by proving an analog of \Cref{prop:Mainprop} over the algebraically closed field $\overline{\F}_p$, then use base change arguments similar to the previous section to establish the result for any field $k$ of characteristic $p$. 

\begin{prop}\label{prop:FpBarSplitting}
    Let $p$ and $\ell$ be distinct primes. The conclusion of \Cref{prop:Mainprop} holds for $(k,R) = (\overline{\F}_p,\Z/\ell)$. 
\end{prop}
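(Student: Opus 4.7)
The plan is to rerun the nested induction in the proof of \Cref{prop:Mainprop}, replacing the one essential use of complex realization by a specialization argument from characteristic zero. The pair $(\overline{\F}_p,\Z/\ell)$ satisfies Beilinson--Soul\'e vanishing by the assumption cited in \Cref{sec:Prelims}, so the Tate-summand decompositions of Sections~\ref{sec:AutBundles}--\ref{sec:MotiveOfA} and the Rothenberg--Steenrod calculations of \Cref{sec:SSCalcs} are all available in $\cat{DM}(\overline{\F}_p,\Z/\ell)$ with no modification.

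First I would observe that the base cases of the induction ($n_r = 1$ and the case $m = 0$ with $n_r > 1$) rely only on the structure map $p\co A(n_1,\dots,n_r) \to \Fl'(n_1,\dots,n_r)$ being split, together with the pure Tate structure of \Cref{prop:APureTate} and \Cref{lem:IncTateSum}; none of these ingredients involves realization. Similarly, the inductive step for $r > 1$ uses only the comparison of Rothenberg--Steenrod spectral sequences induced by $f^1(m,n_1,\dots,n_r)$, together with \Cref{prop:SSDescription}. Both of these parts transcribe directly into the present setting.

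The hard step is the inductive case $r = 1$, where the proof of \Cref{prop:Mainprop} used the commuting diagram \eqref{eq:KeyDiagram} and \Cref{lem:TrackClasses} to show that $f_m^1(m,n)^*$ is an isomorphism in each bidegree of Chow height $m$. My plan is to replace this step by a specialization. Fix a strictly Henselian discrete valuation ring $V$ with residue field $\overline{\F}_p$ and fraction field $K$ of characteristic zero, and let $\overline{K}$ denote its algebraic closure. Since the schemes $A(m,n)$, $\GL_n$, and the map $f^1(m,n)$ are defined over $\Z$, and the functor $\cat{Spc}(\Z) \to \cat{Spc}(S)$ is left Quillen (so preserves homotopy cofibres), the motivic spaces $X_m(m,n)$ and $X_m(n)$ and the map $f_m^1(m,n)$ base change coherently to both $\overline{K}$ and $\overline{\F}_p$. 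Over $(\overline{K},\Z/\ell)$, \Cref{prop:Char0BC} already gives the desired isomorphism in motivic cohomology in Chow height $m$. It therefore suffices to transport this isomorphism across specialization.

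The main obstacle is justifying the rigidity of motivic cohomology with $\Z/\ell$ coefficients across algebraically closed fields of different characteristics. Because all motives in sight are pure Tate, their motivic cohomology groups are concentrated in the bidegrees $(p,q)$ with $p \leq 2q$, and for our summands all nonzero groups fall inside the Beilinson--Lichtenbaum range where motivic cohomology with $\Z/\ell$ coefficients is computed by \'etale cohomology with coefficients in $\mu_\ell^{\otimes q}$. Proper-smooth base change in \'etale cohomology with coefficients coprime to the residue characteristic then transfers the rank-and-generator bookkeeping from $(\overline{K},\Z/\ell)$ to $(\overline{\F}_p,\Z/\ell)$, and shows that the base change of $f_m^1(m,n)^*$ remains an isomorphism in the relevant bidegrees. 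Once this rigidity step is in place, the rest of the argument of \Cref{prop:Mainprop} runs verbatim, yielding the claimed splitting over $(\overline{\F}_p,\Z/\ell)$.
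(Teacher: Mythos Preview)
Your high-level skeleton matches the paper's: rerun the nested induction, keep the base cases and the $r>1$ step verbatim, and for the $r=1$ step transport information across a strictly Henselian mixed-characteristic ring. But two of the justifications you give for the transport step are wrong as written.

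First, the Beilinson--Lichtenbaum claim fails. You assert that the nonzero groups lie in bidegrees with $p\le 2q$ and call this ``the Beilinson--Lichtenbaum range,'' but the BL comparison $\h^{p,q}(-,\Z/\ell)\cong \h^p_{\et}(-,\mu_\ell^{\otimes q})$ is only valid for $p\le q$. The Chow-height-$m$ summands you need are $R(d(i_1,\dots,i_m))$, sitting in bidegree $(2q-m,q)$ with $q=\sum i_j\ge m(m+1)/2$, so for $m\ge 2$ these are outside the BL range. The paper never claims the motivic-to-\'etale map is an isomorphism in these bidegrees. It only maps motivic $\to$ Lichtenbaum, runs the base-change zig-zag through the Witt vectors to $\C$, complex-realizes Lichtenbaum to singular cohomology (an isomorphism because the $X_m$ are iterated cofibres of smooth schemes and the \'etale $H\Z/\ell$ realizes to $H\Z/\ell$), and then reruns the rank-count-plus-surjectivity argument of \Cref{lem:TrackClasses} at the singular level. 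The composite $\h^{s,t}_{\mot}\to \h^s_{\mathrm{sing}}$ is not asserted to be an isomorphism; one only needs it to hit the classes $\rho_I^u$, which together with the equal-rank hypothesis forces $f_m^1(m,n)^*$ to be an isomorphism.

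Second, ``proper-smooth base change'' does not apply: $\GL_n$, $A(m,n)$, and the flag-type varieties are affine, not proper. The paper instead works in $\cat{DM}_{\et}(-,\Z/\ell)$ and invokes a rigidity theorem valid for arbitrary smooth schemes along the functors \eqref{eq:DMBaseChange}, then bootstraps to the cofibre objects $X_m(n_1,\dots,n_r)$ using that those functors are triangulated. Your shortcut of citing \Cref{prop:Char0BC} over $\overline{K}$ and then specializing is in principle workable, but it needs this rigidity input rather than proper base change, and you still have to explain how knowing the map is an isomorphism in \'etale cohomology forces it in motivic cohomology over $\overline{\F}_p$---which, absent BL, is exactly the gap the paper's surjectivity-plus-rank argument is designed to fill.
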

\begin{proof}
    In the proof of \Cref{prop:Mainprop}, the same arguments for the base cases apply, and since Beilinson--Soul\'{e} vanishing holds for the pair $(\overline{\F}_p, \Z/\ell)$, the spectral sequence arguments for the inductive step with $r>1$ apply. We just need to prove the inductive step with $r=1$, which, as in the characteristic-zero case, amounts to showing that the map in cohomology  
    \[
        f_m^1(m,n)^*\co \tilde{\h}{\vphantom{\h}}^{s,t}(X_m(n)_{\overline{\F}_p},\Z/\ell) \to \tilde{\h}{\vphantom{\h}}^{s,t}(X_m(m,n)_{\overline{\F}_p},\Z/\ell)
    \]
    is an isomorphism whenever $2t-s = m$, both source and target being free $\Z/\ell$-modules of the same rank by the inductive hypothesis. We show that $f_m^1(m,n)^*$ is an isomorphism in unreduced motivic cohomology.
    
    The continuous map from the \'{e}tale to the Nisnevich site gives a comparison
    \begin{equation}\label{eq:MapOfSites}
        \begin{tikzcd}[column sep = large]
            \h^{s,t}(X_m(n)_{\overline{\F}_p},\Z/\ell) \rar["{f_m^1(m,n)^*}"] \dar & \h^{s,t}(X_m(m,n)_{\overline{\F}_p},\Z/\ell) \dar \\
            \h_L^{s,t}(X_m(n)_{\overline{\F}_p}, \Z/\ell) \rar["{f_m^1(m,n)^*}"]& \h_L^{s,t}(X_m(m,n)_{\overline{\F}_p},\Z/\ell) 
        \end{tikzcd}
    \end{equation}
    where $\h_L^{s,t}(-,\Z/\ell)$ denotes Lichtenbaum (or \'{e}tale) motivic cohomology with $\Z/\ell$-coefficients. Following the techniques of \cite{Ray:ModulesProj}, there is a strictly Henselian local ring $\Lambda$ with residue field $\overline{\F}_p$ and fraction field $K(\Lambda)$ that admits a complex embedding (e.g., $\Lambda = W(\overline{\F}_p)$, the ring of Witt vectors of $\overline{\F}_p$). There are base change functors
    \begin{equation}\label{eq:DMBaseChange}
        \cat{DM}_{\et} (\overline{\F}_p, \Z/\ell) \leftarrow \cat{DM}_{\et} (\Lambda, \Z/\ell) \to \cat{DM}_{\et} (K(\Lambda), \Z/\ell) \to \cat{DM}_{\et} (\C,\Z/\ell).
    \end{equation}
    where $\cat{DM}_{\et}(-, \Z/\ell)$ denotes the fibred category of \'{e}tale motives with $\Z/\ell$-coefficients (see \cite{CD:EtaleMotives} for a precise definition). For a smooth $\Z$-scheme $Y$, there are natural isomorphisms
    \[
        \h_L^{s,t}(Y_S,\Z/\ell) \cong \h_{\et}^s(Y_S, \Z/\ell) 
    \]
    where $S$ is any of the base schemes in \eqref{eq:DMBaseChange} and the left hand side is computed in $\cat{DM}_{\et}(S,\Z/\ell)$ (even $S = \Lambda$, see \cite{CD:EtaleMotives}*{Thm. 4.5.2}). The homomorphisms in Lichtenbaum cohomology induced by \eqref{eq:DMBaseChange} are isomorphisms for smooth $\Z$-schemes by \cite{Ray:ModulesProj}*{Thm. 5.2}. Recall from \Cref{sec:AuxSpaces} that the motivic spaces $X_m(n_1, \dots, n_r)$ are iterated homotopy cofibres, and that $X_1(n_1, \dots, n_r)$ is the homotopy cofibre of a map of representables. An inductive argument, utilizing the fact that the functors in \eqref{eq:DMBaseChange} are triangulated, shows that \eqref{eq:DMBaseChange} induces a zig-zag of isomorphisms in Lichtenbaum cohomology of (base changes of) $X_m(n_1, \dots, n_r)$. In particular, there is a comparison
    \begin{equation}\label{eq:BCZigZag}
    \begin{tikzcd}[column sep = large]
        \h_L^{*,*}(X_m(n)_{\overline{\F}_p}, \Z/\ell) \dar["\cong"] \rar["{f_m^1(m,n)^*}"] & \h_L^{*,*}(X_m(m,n)_{\overline{\F}_p},\Z/\ell) \dar["\cong"] \\
        \h_L^{*,*}(X_m(n)_\C, \Z/\ell) \rar["{f_m^1(m,n)^*}"] & \h_L^{*,*}(X_m(m,n)_\C,\Z/\ell) 
    \end{tikzcd}
    \end{equation}
    induced by a zig-zag of base changes. The \'{e}tale motivic Eilenberg-MacLane spectrum $H\Z/\ell_{\et}$ in $\cat{SH}(\C)$ complex realizes to $H \Z/\ell$ since any choice of Bott element, thought of as a morphism $\beta\co H \Z/\ell_{\mot} \to \Sigma^{0,1} H \Z/\ell_{\mot}$, complex realizes to a weak equivalence $H\Z/\ell \to H\Z/\ell$ in $\cat{SH}$. Complex realization thus induces a graded ring homomorphism
    \[
        Re_{\C,Y}\co \h_L^{*,*}(Y_\C,\Z/\ell) \to \h^*(Re_{\C}(Y), \Z/\ell)
    \]
    which, for a fixed bidegree and when $Y$ is a smooth scheme, coincides with the comparison isomorphism of \cite{Milne}*{Thm. III.3.12}. Put together, we have comparisons 
    \begin{equation}\label{eq:FpComp}
        \begin{tikzcd}[column sep = large]
            \h^{s,t}(X_m(n)_{\overline{\F}_p},\Z/\ell) \rar["{f_m^1(m,n)^*}"] \dar["\phi_{X_m(n)}"] & \h^{s,t}(X_m(m,n)_{\overline{\F}_p},\Z/\ell) \dar["\phi_{X_m(m,n)}"] &  \\
            \h_L^{s,t}(X_m(n)_\C, \Z/\ell) \rar["{f_m^1(m,n)^*}"] \dar["\cong"',"Re_{\C,X_m(n)}"] & \h_L^{s,t}(X_m(m,n)_\C,\Z/\ell) \dar["\cong"',"Re_{\C,X_m(m,n)}"] \\
            \h^s(\U(n)/F_{m-1}(n),\Z/\ell) \rar["i_m^*"] & \h^s(F_m(n)/F_{m-1}(n),\Z/\ell) 
        \end{tikzcd}
    \end{equation}
    where the lower vertical maps are induced by complex realization, and $\phi_{(-)}$ is the composition of \eqref{eq:MapOfSites} and \eqref{eq:BCZigZag}. The two realization maps in \eqref{eq:FpComp} are isomorphisms since the motivic spaces $X_m(n)_\C$ and $X_m(m,n)_\C$ are iterated homotopy cofibres of representables and complex realization is left Quillen. 
    
    We now proceed with a diagram chase similar to the proof of \Cref{prop:Mainprop}. Since $\h^{s,t}(X_m(n)_{\bar{\F}_p}, \Z/\ell)$, $\h^{s,t}(X_m(m,n)_{\bar{\F}_p}, \Z/\ell)$, $\h_L^{s,t}(X_m(m,n)_\C, \Z/\ell)$, and $\h^s(F_m(n)/F_{m-1}(n), \Z/\ell)$ are free $\Z/\ell$-modules of the same rank, it suffices to show that the composite
    \[
        Re_{\C,X_m(m,n)} \circ \phi_{X_m(m,n)} \circ f_m^1(m,n)^*: \h^{s,t}(X_m(n)_{\bar{\F}_p}, \Z / \ell) \to \h^s(F_m(n) / F_{m-1}(n),\Z / \ell)
    \]
    in \eqref{eq:FpComp} is surjective. An inductive argument similar to that of \Cref{lem:TrackClasses} shows that for each sequence $I$ in $\{1, \dots, n \}$ with $|I| \geq m$, there is a class $x_I \in \h^{*,*}(X_m(n)_{\bar{\F}_p}, \Z/ \ell)$ with $\ch(x_I) = |I|$ and $Re_{\C,X_m(n)} \circ \phi_{X_m(n)}(x_I) = \rho_I$. With the help of \Cref{lem:Lev}, this proves the claim.
\end{proof}

\begin{prop}
    Let $p$ and $\ell$ be distinct primes, and suppose $k$ is a field of characteristic $p$. The conclusion of \Cref{prop:Mainprop} holds for the pair $(k,\Z/\ell)$.
\end{prop}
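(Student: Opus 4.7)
The plan is to follow the base-change strategy of \Cref{prop:Char0BC}, using \Cref{prop:FpBarSplitting} as the reference case. Since $\overline{\F}_p$ does not in general embed into an arbitrary characteristic-$p$ field $k$, I would route the comparison through an algebraic closure $\bar{k}$ of $k$, which contains both $\overline{\F}_p$ and $k$.

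As in \Cref{prop:Char0BC}, the base cases of \Cref{prop:Mainprop} carry over verbatim, since they use only the $\A^1$-homotopy cofibre structure and the retraction supplied by $p$. For the inductive step, I would first extend \Cref{prop:FpBarSplitting} from $\overline{\F}_p$ to $\bar{k}$ by replaying the argument of \Cref{prop:Char0BC}: base change induces a commuting diagram of motivic cohomology groups whose vertical maps (from $\overline{\F}_p$ to $\bar{k}$) are isomorphisms in bidegrees $(s,t)$ with $2t - s = m$. This holds because the motives in question are pure Tate over $\overline{\F}_p$ by \Cref{prop:FpBarSplitting} (and hence also over $\bar{k}$, by preservation of pure Tate motives under base change), and because the Chow-height $0$ motivic cohomology of the spectrum of a field with $\Z/\ell$-coefficients is concentrated in bidegree $(0,0)$, where it equals $\Z/\ell$. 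The latter is a consequence of Beilinson--Lichtenbaum (valid since $\ell$ is prime to $p$), combined with the vanishing of Chow groups of positive codimension of a field.

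Next, I would descend from $\bar{k}$ to $k$ by applying the analogous argument to the base change $k \hookrightarrow \bar{k}$. The nested inductive hypothesis supplies the pure Tate decompositions of $M(X_m(n_1, \dots, n_r)_k, *)$ and $M(X_m(m, n_1, \dots, n_r)_k, *)$ with the claimed summand structure (transferred from earlier inductive steps by the splittings of the corresponding distinguished triangles), whence the same Chow-height analysis shows that the base change $k \to \bar{k}$ induces isomorphisms on motivic cohomology in bidegrees $(s,t)$ with $2t - s = m$. Combined with the isomorphism over $\bar{k}$ from the previous stage, this yields the required isomorphism $f_m^1(m, n_1, \dots, n_r)^*$ over $k$, and Lemmas~\ref{lem:CohoImpliesSplit} and \ref{lem:IncTateSum} then furnish the splitting of the distinguished triangle as well as the identification of $f_m^1(m, n_1, \dots, n_r)_*$ with the inclusion of Chow-height $m$ Tate summands.

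The main subtlety, as in \Cref{prop:Char0BC}, is organizing the two-stage comparison to fit cleanly into the nested induction on $n_r$ and $m$, ensuring that the pure Tate hypothesis over $k$ is available at exactly the moment it is invoked; this is handled automatically, but it warrants explicit checking that each inductive step produces the pure Tate decomposition required at the next.
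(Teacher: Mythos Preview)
Your argument is correct, but the paper routes the comparison differently. Instead of passing through an algebraic closure $\bar{k}$ of $k$, the paper pivots through the prime field $\F_p$: it first establishes the conclusion of \Cref{prop:Mainprop} for $(\F_p,\Z/\ell)$ by base-changing along $\F_p \hookrightarrow \overline{\F}_p$ and invoking \Cref{prop:FpBarSplitting} (exactly as in \Cref{prop:Char0BC}), and then obtains the result for an arbitrary characteristic-$p$ field $k$ by base-changing along $\F_p \hookrightarrow k$. This parallels the characteristic-zero argument more closely, with $\F_p$ playing the role of $\Q$, and has the minor advantage that the intermediate field is canonical and independent of $k$. Your route via $\overline{\F}_p \hookrightarrow \bar{k} \hookleftarrow k$ works for the same reason---both legs are field extensions over which the pure Tate decompositions and the identification $\h^{0,0}(\Spec F,\Z/\ell) = \Z/\ell$ make the vertical base-change maps isomorphisms in the relevant bidegrees---so the two approaches are interchangeable. (As a small aside, the vanishing you need in Chow height~$0$ follows directly from $\h^{2b,b}(\Spec F,R) = \CH^b(\Spec F,R)$, without invoking Beilinson--Lichtenbaum.)
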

\begin{proof}
    Again, the base cases are easy to establish. For the inductive step, the base change induced by $\F_p \to \overline{\F}_p$ and appeal to \Cref{prop:FpBarSplitting} (\textit{c.f.}~the proof of \Cref{prop:Char0BC}) establishes the conclusion of \Cref{prop:Mainprop} for $(k,R) = (\F_p,\Z/\ell)$. Comparison to the prime field $\F_p$ establishes the result for $(k,\Z/\ell)$ where $k$ is any characteristic $p$ field. 
\end{proof}

\appendix

\section{Distinguished triangles and pure Tate motives}

We collect two basic results regarding distinguished triangles in $\cat{DM}(k,R)$ in which one or more of the objects in the triangle is pure Tate.

\begin{lem}\label{lem:CohoImpliesSplit}
    Suppose 
    \[
        M \xrar{f} N \to C(f) \xrar{[1]}
    \]
    is a distinguished triangle in $\cat{DM}(k,R)$ and $M \cong \bigoplus_{i \in I} R(q_i)[p_i]$ is pure Tate. Then the distinguished triangle splits if the induced map 
    \[
        f^*: \bigoplus_{i \in I} \h^{p_i,q_i}(N,R) \to \bigoplus_{i \in I} \h^{p_i,q_i}(M,R)
    \]
    is surjective. 
\end{lem}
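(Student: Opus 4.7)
The plan is to show that $f$ admits a retraction in $\cat{DM}(k,R)$, from which the splitting follows by a standard triangulated-category argument.

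First, I would unwind the cohomological hypothesis into a statement about morphism groups. Since $M \cong \bigoplus_{i \in I} R(q_i)[p_i]$ is a \emph{finite} direct sum of Tate twists and $\h^{p,q}(-,R) = \Hom_{\cat{DM}(k,R)}(-, R(q)[p])$, there are natural identifications
\[
    \Hom_{\cat{DM}(k,R)}(N,M) \;\cong\; \bigoplus_{i \in I} \h^{p_i,q_i}(N,R), \qquad \End_{\cat{DM}(k,R)}(M) \;\cong\; \bigoplus_{i \in I} \h^{p_i,q_i}(M,R),
\]
and the map $f^*$ in the statement of the lemma is precisely the precomposition map $\Hom(N,M) \to \Hom(M,M)$ sending $g \mapsto g \circ f$.

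Next, I would locate the identity. Under the second identification above, $\id_M \in \End(M)$ corresponds to the tuple whose $i$-th component is the class in $\h^{p_i,q_i}(M,R)$ given by the projection $M \twoheadrightarrow R(q_i)[p_i]$. By the surjectivity hypothesis on $f^*$, there exists $r \in \bigoplus_{i\in I} \h^{p_i,q_i}(N,R) = \Hom_{\cat{DM}(k,R)}(N,M)$ with $f^*(r) = \id_M$, i.e.\ $r \circ f = \id_M$. Thus $f$ has a retraction.

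Finally, I would invoke the standard fact that any distinguished triangle $M \xrightarrow{f} N \to C(f) \xrightarrow{[1]}$ in a triangulated category in which $f$ admits a retraction $r\colon N \to M$ is isomorphic to the split triangle $M \to M \oplus C(f) \to C(f) \xrightarrow{0}$: the map $(r, \pi)\colon N \to M \oplus C(f)$, where $\pi\colon N \to C(f)$ is the second map of the triangle, fits into a morphism of triangles which is an isomorphism on two out of three terms, hence on all three by the triangulated five-lemma. This yields the desired splitting.

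There is no substantial obstacle; the content is entirely in the observation that the hypothesized surjectivity lifts $\id_M$ to a retraction, after which the splitting is formal in any triangulated category.
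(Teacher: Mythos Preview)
Your proof is correct and is the standard argument. The paper does not actually supply a proof of this lemma; it is stated in the appendix as one of two ``basic results'' and left to the reader, so there is nothing further to compare against.
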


The following is a straightforward exercise in algebra. The key observation is that a map $f\co M \to N$ of pure Tate objects in $\cat{DM}(k,R)$ (with specified Tate sum decompositions of $M$ and $N$) is the same as a matrix with entries in the motivic cohomology groups of $\Spec k$. 

\begin{lem}\label{lem:IncTateSum}
    Suppose 
    \[
        M \xrar{f} N \to C(f) \xrar{[1]}
    \]
    is a split distinguished triangle in $\cat{DM}(k,R)$, that $M$ and $N$ are pure Tate, and that the Tate summands of $M$ are concentrated in Chow height $m$. Suppose further that $M$ and $N$ have the same Chow height $m$ Tate summands. Then, up to isomorphism, the map $f\co M \to N$ is the inclusion of Chow height $m$ Tate summands. In particular, $C(f)$ is pure Tate, and the Tate summands of $C(f)$ correspond to those Tate summands of $N$ with Chow height different from $m$. 
\end{lem}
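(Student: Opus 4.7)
The plan is to reduce the statement to a matrix calculation, using the identification (mentioned just before the lemma) of morphisms between pure Tate motives with matrices of motivic cohomology classes of $\Spec k$, together with the vanishing $\m_R^{a,b} = \h^{a,b}(\Spec k, R) = 0$ whenever $a > b$ with $b \geq 0$, or $b < 0$. This vanishing is standard for any field $k$ and requires no Beilinson--Soul\'{e} hypothesis.

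First I would decompose $N = N_{<m} \oplus N_m \oplus N_{>m}$ by Chow height of the given Tate summands, so that $N_m$ matches the Tate summands of $M$ by hypothesis. For Tate motives $R(q)[p]$ and $R(q')[p']$ of Chow heights $c$ and $c'$, the morphism group is $\m_R^{p'-p,q'-q}$, and its ``Chow degree'' $2(q'-q)-(p'-p)$ equals $c'-c$. The vanishing above yields two structural facts: (i) a morphism between pure Tate objects whose summands lie in Chow heights $c$ and $c'$ with $c > c'$ vanishes, and (ii) a morphism between pure Tate objects whose summands all lie in a single Chow height is block-diagonal with respect to Tate-twist type, with entries in $\m_R^{0,0} = R$ in each block.

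Second, I would apply these observations to $f$ and a chosen retraction $s\colon N \to M$. Writing $f = (f_{<m}, f_m, f_{>m})^T$ and $s = (s_{<m}, s_m, s_{>m})$ in block form, observation (i) forces $f_{<m} = 0$ and $s_{>m} = 0$, while (ii) says $f_m$ and $s_m$ are block-diagonal with $R$-entries. The retraction identity $sf = \id_M$ then reduces to $s_m f_m = \id_M$, since the cross-terms vanish. Within each Tate-type block of size $n_\lambda$, we obtain square $R$-matrices $(f_m)_\lambda, (s_m)_\lambda$ with $(s_m)_\lambda (f_m)_\lambda = I_{n_\lambda}$; taking determinants in $R$ forces $\det(f_m)_\lambda \in R^\times$, so $(f_m)_\lambda$ is invertible. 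Hence $f_m\colon M \to N_m$ is an isomorphism.

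Finally, with $f_m$ invertible, the block-triangular matrix
\[
    \beta = \begin{pmatrix} 1 & 0 & 0 \\ 0 & 1 & 0 \\ 0 & -f_{>m}\,f_m^{-1} & 1 \end{pmatrix} \colon N \to N
\]
is an automorphism satisfying $\beta f = (0, f_m, 0)^T$. Identifying $M$ with $N_m$ via $f_m$, we see that $f$ is isomorphic (as a morphism in $\cat{DM}(k,R)$) to the canonical inclusion of the Chow-height-$m$ Tate summands of $N$. It follows immediately that $C(f) \cong N_{<m} \oplus N_{>m}$ is pure Tate, with Tate summands exactly those of $N$ of Chow heights different from $m$. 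The main obstacle is organizational --- tracking the three Chow-height ranges and invoking the vanishing consistently --- and once that bookkeeping is in place, the argument is a routine block-matrix manipulation, as the authors' preamble suggests.
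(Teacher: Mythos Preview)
Your proposal is correct and is precisely the ``straightforward exercise in algebra'' the paper alludes to: the paper gives no detailed proof, only the observation that morphisms between pure Tate objects are matrices with entries in $\m_R$, and you have carried out that matrix argument in full, including the key vanishing inputs and the block-triangular change of basis.
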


\clearpage

\bibliographystyle{alpha}
\bibliography{References}

@Article{RS:CohoOfClassifyingSpaces,
  author   = {Rothenberg, M. and Steenrod, N. E.},
  journal  = {Bulletin of the American Mathematical Society},
  title    = {The cohomology of classifying spaces of {H}-spaces},
  year     = {1965},
  issn     = {0002-9904},
  pages    = {872--875},
  volume   = {71},
  doi      = {10.1090/S0002-9904-1965-11420-3},
  mrnumber = {208596},
}

@Book{CD:TriagulatedCatsOfMotives,
  author    = {Cisinski, Denis-Charles and Déglise, Frédéric},
  publisher = {Springer, Cham},
  title     = {Triangulated categories of mixed motives},
  year      = {2019},
  isbn      = {9783030332419 9783030332426},
  series    = {Springer {Monographs} in {Mathematics}},
  doi       = {10.1007/978-3-030-33242-6},
  mrnumber  = {3971240},
}

@Article{Crabb:StableSummands,
  author   = {Crabb, M. C. and Hubbuck, J. R. and McCall, J. A. W.},
  journal  = {Proceedings of the Royal Society of Edinburgh. Section A. Mathematics},
  title    = {Stable summands of {${\rm U}(n)$}},
  year     = {1997},
  issn     = {0308-2105,1473-7124},
  number   = {5},
  pages    = {963--973},
  volume   = {127},
  doi      = {10.1017/S0308210500026834},
  mrnumber = {1475640},
}

@Article{DI:MotivicCellStructures,
  author     = {Dugger, Daniel and Isaksen, Daniel C.},
  journal    = {Algebr. Geom. Topol.},
  title      = {Motivic cell structures},
  year       = {2005},
  issn       = {1472-2747,1472-2739},
  pages      = {615--652},
  volume     = {5},
  doi        = {10.2140/agt.2005.5.615},
  fjournal   = {Algebraic \& Geometric Topology},
  mrclass    = {55U35 (14F42)},
  mrnumber   = {2153114},
  mrreviewer = {Oliver\ R\"{o}ndigs},
  url        = {https://doi.org/10.2140/agt.2005.5.615},
}

@Article{Wil:MotivicCoho,
  author   = {Williams, Ben},
  journal  = {Journal of K-Theory. K-Theory and its Applications in Algebra, Geometry, Analysis \& Topology},
  title    = {The motivic cohomology of {Stiefel} varieties},
  year     = {2012},
  issn     = {1865-2433,1865-5394},
  number   = {1},
  pages    = {141--163},
  volume   = {10},
  doi      = {10.1017/is011010023jkt164},
  mrnumber = {2990564},
}

@Article{Mil:StableSplittings,
  author   = {Miller, Haynes},
  journal  = {Topology. An International Journal of Mathematics},
  title    = {Stable splittings of {Stiefel} manifolds},
  year     = {1985},
  issn     = {0040-9383},
  number   = {4},
  pages    = {411--419},
  volume   = {24},
  doi      = {10.1016/0040-9383(85)90012-6},
  mrnumber = {816522},
}

@Article{Ron:EndomorphismsofP2,
  author   = {Röndigs, Oliver},
  journal  = {Inventiones Mathematicae},
  title    = {Endomorphisms of the projective plane and the image of the {Suslin}-{Hurewicz} map},
  year     = {2023},
  issn     = {0020-9910,1432-1297},
  number   = {3},
  pages    = {1161--1194},
  volume   = {232},
  doi      = {10.1007/s00222-023-01179-4},
  mrnumber = {4588564},
}

@Article{MV:A1Homotopy,
  author   = {Morel, Fabien and Voevodsky, Vladimir},
  journal  = {Institut des Hautes Études Scientifiques. Publications Mathématiques},
  title    = {{$\mathbb{A}^1$}-homotopy theory of schemes},
  year     = {1999},
  issn     = {0073-8301,1618-1913},
  number   = {90},
  pages    = {45--143},
  mrnumber = {1813224},
}

@Article{Wil:EquivariantMotivicCoho,
  author   = {Williams, Ben},
  journal  = {Algebraic \& Geometric Topology},
  title    = {The {$\mathbb{G}_m$}-equivariant motivic cohomology of {S}tiefel varieties},
  year     = {2013},
  issn     = {1472-2747,1472-2739},
  number   = {2},
  pages    = {747--793},
  volume   = {13},
  doi      = {10.2140/agt.2013.13.747},
  mrnumber = {3044592},
}

@Article{Pus:HigherChernClasses,
  author   = {Pushin, Oleg},
  journal  = {K-Theory. An Interdisciplinary Journal for the Development, Application, and Influence of K-Theory in the Mathematical Sciences},
  title    = {Higher {Chern} classes and {Steenrod} operations in motivic cohomology},
  year     = {2004},
  issn     = {0920-3036,1573-0514},
  number   = {4},
  pages    = {307--321},
  volume   = {31},
  doi      = {10.1023/B:KTHE.0000031355.85701.d8},
  mrnumber = {2068875},
}

@Book{MVW:MotivicCoho,
  author    = {Mazza, Carlo and Voevodsky, Vladimir and Weibel, Charles},
  publisher = {American Mathematical Society, Providence, RI; Clay Mathematics Institute, Cambridge, MA},
  title     = {Lecture notes on motivic cohomology},
  year      = {2006},
  isbn      = {9780821838471},
  series    = {Clay {Mathematics} {Monographs}},
  volume    = {2},
  mrnumber  = {2242284},
}

@InCollection{K:MotivicCohoOfCellularVarieties,
  author    = {Kahn, Bruno},
  booktitle = {Algebraic {K}-theory ({Seattle}, {WA}, 1997)},
  publisher = {Amer. Math. Soc., Providence, RI},
  title     = {Motivic cohomology of smooth geometrically cellular varieties},
  year      = {1999},
  isbn      = {9780821809273},
  pages     = {149--174},
  series    = {Proc. {Sympos}. {Pure} {Math}.},
  volume    = {67},
  doi       = {10.1090/pspum/067/1743239},
  mrnumber  = {1743239},
}

@Article{DI:TopologicalHypercoversRealization,
  author   = {Dugger, Daniel and Isaksen, Daniel C.},
  journal  = {Mathematische Zeitschrift},
  title    = {Topological hypercovers and {$\mathbb{A}^1$}-realizations},
  year     = {2004},
  issn     = {0025-5874,1432-1823},
  number   = {4},
  pages    = {667--689},
  volume   = {246},
  doi      = {10.1007/s00209-003-0607-y},
  mrnumber = {2045835},
}

@InCollection{PPR:VoevodskysKTheorySpectrum,
  author    = {Panin, Ivan and Pimenov, Konstantin and Röndigs, Oliver},
  booktitle = {Algebraic topology},
  publisher = {Springer, Berlin},
  title     = {On {Voevodsky}'s algebraic {K}-theory spectrum},
  year      = {2009},
  isbn      = {9783642011993},
  pages     = {279--330},
  series    = {Abel {Symp}.},
  volume    = {4},
  doi       = {10.1007/978-3-642-01200-6_10},
  mrnumber  = {2597741},
}

@Article{Lev:ComparisonMotivicAndClassical,
  author   = {Levine, Marc},
  journal  = {Journal of Topology},
  title    = {A comparison of motivic and classical stable homotopy theories},
  year     = {2014},
  issn     = {1753-8416,1753-8424},
  number   = {2},
  pages    = {327--362},
  volume   = {7},
  doi      = {10.1112/jtopol/jtt031},
  mrnumber = {3217623},
}

@Article{DUG:UniversalHmtpy,
  author   = {Dugger, Daniel},
  journal  = {Advances in Mathematics},
  title    = {Universal homotopy theories},
  year     = {2001},
  issn     = {0001-8708,1090-2082},
  number   = {1},
  pages    = {144--176},
  volume   = {164},
  doi      = {10.1006/aima.2001.2014},
  mrnumber = {1870515},
}

@InCollection{Wei:HomotopyAlgebraicKTheory,
  author    = {Weibel, Charles A.},
  booktitle = {Algebraic {K}-theory and algebraic number theory ({Honolulu}, {HI}, 1987)},
  publisher = {Amer. Math. Soc., Providence, RI},
  title     = {Homotopy algebraic {K}-theory},
  year      = {1989},
  isbn      = {9780821850909},
  pages     = {461--488},
  series    = {Contemp. {Math}.},
  volume    = {83},
  doi       = {10.1090/conm/083/991991},
  mrnumber  = {991991},
}

@Article{A:BSZ/l,
  author   = {Akhtar, Reza},
  journal  = {Journal of Pure and Applied Algebra},
  title    = {A mod-l vanishing theorem of {Beilinson}-{Soulé} type},
  year     = {2007},
  issn     = {0022-4049,1873-1376},
  number   = {2},
  pages    = {555--560},
  volume   = {208},
  doi      = {10.1016/j.jpaa.2006.01.017},
  mrnumber = {2277695},
}

@Article{Ray:ModulesProj,
  author   = {Raynaud, Michèle},
  journal  = {Inventiones Mathematicae},
  title    = {Modules projectifs universels},
  year     = {1968},
  issn     = {0020-9910,1432-1297},
  pages    = {1--26},
  volume   = {6},
  doi      = {10.1007/BF01389829},
  mrnumber = {236164},
}

@Article{CD:EtaleMotives,
  author   = {Cisinski, Denis-Charles and Déglise, Frédéric},
  journal  = {Compositio Mathematica},
  title    = {Étale motives},
  year     = {2016},
  issn     = {0010-437X,1570-5846},
  number   = {3},
  pages    = {556--666},
  volume   = {152},
  doi      = {10.1112/S0010437X15007459},
  mrnumber = {3477640},
}

@Book{Milne,
  author    = {Milne, James S.},
  publisher = {Princeton University Press, Princeton, NJ},
  title     = {Étale cohomology},
  year      = {1980},
  isbn      = {9780691082387},
  series    = {Princeton {Mathematical} {Series}},
  volume    = {No. 33},
  mrnumber  = {559531},
}

@TechReport{Pen:MWMotivicCoho,
  author   = {Peng, Keyao},
  title    = {Milnor-{Witt} motivic cohomology and linear algebraic groups},
  year     = {2023},
  month    = jun,
  note     = {ADS Bibcode: 2023arXiv230605260P Type: article},
  abstract = {This article presents two key computations in MW-motivic cohomology. Firstly, we compute the MW-motivic cohomology of the symplectic groups \$Sp\_\{2n\}\$ for any \$n{\textbackslash}in{\textbackslash}mathbb\{N\}\$ using the \$Sp\$-orientation and the associated Borel classes. Secondly, following the classical computations and using the analogue in \${\textbackslash}mathbb\{A\}{\textasciicircum}1\$-homotopy of the Leray spectral sequence, we compute the \${\textbackslash}eta\$-inverted MW-motivic cohomology of general Stiefel varieties, obtaining in particular the computation of the \${\textbackslash}eta\$-inverted MW-motivic cohomology of the general linear groups \$GL\_n\$ and the special linear groups \$SL\_n\$ for any \$n{\textbackslash}in{\textbackslash}mathbb\{N\}\$. Finally, we determine the multiplicative structures of these total cohomology groups.},
  doi      = {10.48550/arXiv.2306.05260},
  file     = {Full Text PDF:Peng2023 - Milnor Witt Motivic Cohomology and Linear Algebraic Groups.pdf:PDF:https\://ui.adsabs.harvard.edu/link_gateway/2023arXiv230605260P/ARTICLE},
  journal  = {arXiv e-prints},
  keywords = {Mathematics - Algebraic Geometry},
  url      = {https://ui.adsabs.harvard.edu/abs/2023arXiv230605260P},
  urldate  = {2024-05-31},
}

\end{document}